\documentclass[a4paper,twoside,11pt]{article}

\usepackage{a4wide, fancyhdr, amsmath, amssymb, mathtools, yfonts}
\usepackage{mathrsfs}
\usepackage{graphicx}
\usepackage{tikz}
\usepackage[all]{xy}
\usepackage[utf8]{inputenc}
\usepackage{amsthm}
\usepackage[english]{babel}
\usepackage{chngcntr}
\usepackage{ifthen}
\usepackage{calc}
\usepackage{hyperref}
\usepackage{authblk}
\numberwithin{equation}{section}


\setlength\headheight{20pt}
\addtolength\topmargin{-10pt}
\addtolength\footskip{20pt}

\newcommand{\Z}{\mathbb{Z}}
\newcommand{\Q}{\mathbb{Q}}

\newcommand\FF{\mathbb{F}}

\newcommand\Gal{\mathrm{Gal}}

\newtheorem{lemma}{Lemma}[section]
\newtheorem{theorem}[lemma]{Theorem}
\newtheorem{proposition}[lemma]{Proposition}
\newtheorem{corollary}[lemma]{Corollary}

\newtheorem{remark}{Remark}

\DeclareMathOperator{\diff}{d}
\title{\vspace{ - \baselineskip} \sffamily\bfseries On Malle's conjecture for nilpotent groups, I}
\author[1]{Peter Koymans\thanks{Vivatsgasse 7, 53111 Bonn, Germany, koymans@mpim-bonn.mpg.de}}
\author[1,2]{Carlo Pagano\thanks{Vivatsgasse 7, 53111 Bonn, Germany, carlein90@gmail.com}}
\affil[1]{Max Planck Institute for Mathematics, Bonn}
\affil[2]{University of Glasgow, Glasgow}

\date{\today}


\begin{document}
\maketitle

\begin{abstract}
We develop an abstract framework for studying the strong form of Malle's conjecture \cite{Malle1, Malle2} for nilpotent groups $G$ in their regular representation. This framework is then used to prove the strong form of Malle's conjecture for any nilpotent group $G$ such that all elements of order $p$ are central, where $p$ is the smallest prime divisor of $\# G$. 

We also give an upper bound for any nilpotent group $G$ tight up to logarithmic factors, and tight up to a constant factor in case all elements of order $p$ pairwise commute. Finally, we give a new heuristical argument supporting Malle's conjecture in the case of nilpotent groups in their regular representation.
\end{abstract}

\section{Introduction}
Let $G$ be a non-trivial, finite, nilpotent group and let $K$ be a number field. In this paper we are interested in the counting function
\[
N(X, G, K) := \#\{L/K \text{ Galois} : \Gal(L/K) \cong G, \ |N_{K/\Q}(\Delta(L/K))| \leq X\},
\]
where $\Delta(L/K)$ denotes the relative discriminant of $L/K$. As part of a broader conjecture, Malle \cite{Malle1, Malle2} conjectured that there exists a constant $c(G, K) > 0$ such that
\begin{align}
\label{eStrongMalle}
N(X, G, K) \sim c(G, K) X^{a(G)} (\log X)^{b(G, K) - 1},
\end{align}
where $a(G)$ and $b(G, K)$ are explicit constants that can be computed as follows. Let $p$ be the smallest prime divisor of $\# G$. Then
\[
a(G) = \frac{p}{(p - 1) \# G}, \quad b(G, K) = \#\{C \in \textup{Conj}(G) : C \textup{ non-trivial and } c^p = \text{id} \ \forall c \in C\}/\sim,
\]
where $\text{Conj}(G)$ denotes the set of conjugacy classes of $G$ and where two conjugacy classes $C$ and $C'$ are equivalent if there exists $\sigma \in \Gal(\overline{K}/K)$ such that $\sigma \ast C = C'$. Here the action is given by $\sigma \ast C = C^{\chi(\sigma)}$ with $\chi : \Gal(\overline{K}/K) \rightarrow \hat{\Z}^\ast$ the cyclotomic character.

We shall refer to equation (\ref{eStrongMalle}) as the strong form of Malle's conjecture. In case $G$ is allowed to be an arbitrary finite group, counterexamples are known to the strong form of Malle's conjecture, see the work of Kl\"uners \cite{KlunersCounter}. This led T\"urkelli \cite{Turkelli} to propose a corrected version of equation (\ref{eStrongMalle}). The strong form of Malle's conjecture (including the more general situation where $G$ is not necessarily considered in its regular representation) has been verified in a limited amount of cases, see \cite{DatskovskyWright, DH} for $S_3$, \cite{Wright} for $G$ abelian, \cite{CDO} for $D_4 \subseteq S_4$, \cite{KlunersHab} for generalized quaternion groups, \cite{BhargavaI} for $S_4$, \cite{BhargavaII} for $S_5$, \cite{BhargavaWood} for $S_3 \subseteq S_6$, \cite{FK} for nonic Heisenberg extensions, \cite{MTTW, Wang} for direct products $G \times A$ with $G = S_3, S_4, S_5$ and $A$ abelian. There is also the recent work \cite{ASVW}, which counts $D_4 \subseteq S_4$ when the extensions are ordered by Artin conductor instead of discriminant.

The aim of this paper is to prove the strong form of Malle's conjecture for a large family of nilpotent groups. In the process, we give a parametrization of $G$-extensions that may prove fruitful for future investigations. The authors hope to use these techniques to deal with various other groups in future work.

\begin{theorem}
\label{tMalle}
Let $K$ be a number field and let $G$ be a non-trivial, finite, nilpotent group. Let $p$ be the smallest prime divisor of $\# G$ and assume that all elements of order $p$ are central. Then there exists a constant $c > 0$ such that
\[
N(X, G, K) \sim c X^{\frac{p}{(p - 1) \# G}} (\log X)^{b(G, K) - 1},
\]
where $b(G, K)$ is the Malle constant, which equals the number of elements of order $p$ divided by $[K(\zeta_p) : K]$ in this case.
\end{theorem}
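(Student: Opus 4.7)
The proof proceeds in three stages: a reduction to $p$-groups, a parametrization of $G$-extensions via the central tower arising from the hypothesis, and an analytic extraction via Dirichlet series and a Wiener--Ikehara argument.

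Since $G$ is nilpotent, $G \cong G_p \times G'$ with $G_p$ the Sylow $p$-subgroup and $\gcd(\#G', p) = 1$. Every $G$-extension $L/K$ is uniquely a compositum of linearly disjoint $L_p/K$ and $L'/K$ with Galois groups $G_p$ and $G'$, and $|N_{K/\Q}\Delta(L/K)| = |N_{K/\Q}\Delta(L_p/K)|^{\#G'}\cdot |N_{K/\Q}\Delta(L'/K)|^{\#G_p}$. All elements of order $p$ lie in $G_p \times \{1\}$, so $b(G,K)=b(G_p,K)$. The Malle exponent for $G'$ alone is $q/((q-1)\#G')$ for $q>p$ the smallest prime divisor of $\#G'$; a direct check gives $q/(q-1) < p/(p-1)$, so the Dirichlet series $\sum_{L'} |N_{K/\Q}\Delta(L'/K)|^{-p/((p-1)\#G')}$ converges absolutely. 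This reduces the theorem, up to a convergent Dirichlet factor absorbed into $c$, to the case where $G$ itself is a $p$-group.

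Now $Z := \langle g \in G : g^p = 1 \rangle$ is central by hypothesis, hence elementary abelian, $Z \cong (\Z/p\Z)^m$. Setting $H := G/Z$, every $G$-extension $L/K$ has intermediate $H$-extension $M := L^Z$; conversely I would parametrize $G$-extensions by pairs $(M,\varphi)$, where $\varphi : G_M \twoheadrightarrow Z$ is a surjection whose kernel is normal in $G_K$ and whose resulting extension class in $H^2(H,Z)$ equals the class $\omega_G$ of $G$. Since $Z$ is central in $G$, the induced $H$-action on $Z$ is trivial, and the kernel condition reduces to $\varphi$ being $\Gal(M/K)$-invariant as a character. Combining the tower discriminant relation $\Delta(L/K) = \Delta(M/K)^{\#Z}\cdot N_{M/K}(\Delta(L/M))$ with the conductor-discriminant formula for the abelian extension $L/M$, the bound $|N_{K/\Q}\Delta(L/K)|\leq X$ becomes an explicit bound on products of conductors of the $\FF_p$-components of $\varphi$ on the ray class groups of $M$.

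For each fixed $M$, the inner sum over $\varphi$ is a Wright-style character count constrained by invariance and the extension-class condition; a direct analysis should show that its Dirichlet series has a pole at $s = p/((p-1)\#G)$ of order exactly $b(G,K) = (\#Z-1)/[K(\zeta_p):K]$, matching the number of cyclotomic orbits on $Z \setminus \{0\}$ (each element of central $Z$ being its own conjugacy class). Wiener--Ikehara then yields $c_M X^{p/((p-1)\#G)}(\log X)^{b(G,K)-1}$ for each $M$, and the outer sum over $M$ contributes a convergent factor to the constant $c$. The principal obstacle is closing this outer sum: the quotient $H = G/Z$ need not satisfy the theorem's hypothesis (for instance, $G = Q_{16}$ gives $H = D_4$), so the argument cannot be run inductively in $H$. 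Instead, I would invoke the general upper bound for nilpotent groups mentioned in the introduction, tight up to logarithmic factors, to show that contributions from $M$ with non-minimal $\Delta(M/K)$ are absorbed into the error term, isolating the main term from $M$ of small discriminant. A further technical point is handling the wild ramification above $p$ in $L/M$, which generates the sharp exponent and requires careful local analysis of inertia subgroups in $Z$ in order to recover the cyclotomic orbit count precisely.
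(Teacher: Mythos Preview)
Your high-level strategy --- fix the ``hard'' part of the extension and let the central $(\Z/p\Z)^m$-piece vary freely --- is the same as the paper's. But two concrete problems prevent the proposal from going through as written.

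First, the discriminant identity $|N_{K/\Q}\Delta(L/K)| = |N_{K/\Q}\Delta(L_p/K)|^{\#G'}\cdot |N_{K/\Q}\Delta(L'/K)|^{\#G_p}$ is false at any prime $\mathfrak q$ ramified in both $L_p$ and $L'$. For tame $\mathfrak q$ with inertia $(g_p,g')$, the true exponent is $\#G\bigl(1-1/\mathrm{ord}(g_p,g')\bigr)$ with $\mathrm{ord}(g_p,g')=\mathrm{lcm}(\mathrm{ord}(g_p),\mathrm{ord}(g'))$, whereas your formula gives $\#G(1-1/\mathrm{ord}(g_p))+\#G(1-1/\mathrm{ord}(g'))$; these disagree already for $\mathrm{ord}(g_p)=2$, $\mathrm{ord}(g')=3$. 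The paper avoids this by never reducing to $p$-groups: it parametrizes $\mathrm{Epi}(G_K,G)$ for nilpotent $G$ directly by tuples of pairwise coprime squarefree ideals indexed by $G\setminus\{\mathrm{id}\}$ (Proposition~\ref{Reading off inertia: in general plus}), so that a prime $\mathfrak q$ dividing the variable $v_g$ has inertia generator exactly $g$, and the discriminant is read off as $\prod_g v_g^{\#G(1-1/\mathrm{ord}(g))}$ (Proposition~\ref{reading off discriminants: in general plus}).

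Second, and more seriously, your sentence ``a direct analysis should show that its Dirichlet series has a pole of order exactly $b(G,K)$'' is the entire content of the theorem and is not direct. For a fixed $H$-extension $M$, the set of $G$-extensions $L$ above it is either empty (if the embedding problem is obstructed) or a torsor under $\mathrm{Hom}(G_K,Z)$; the discriminant of $L$ depends on the twist $\chi\in\mathrm{Hom}(G_K,Z)$ in a way that is entangled with $M$ at every prime ramified in $M$ and at the wild primes. The paper resolves this via a normalization: fixing a minimal system of topological generators of the pro-$p$ Galois group (Proposition~\ref{defining normalized phi: in general}), it picks a canonical lift at each step, so that the remaining freedom is exactly a tuple of characters ramified at \emph{new} primes. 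This yields the Product Structure (Proposition~\ref{Product structure plus}): once the $G\setminus H(G)$-variables are fixed and solvable, the $H(G)$-variables are free coprime squarefree ideals supported on primes splitting in $K(\zeta_p)$, with no further embedding-problem constraint. The inner count is then genuinely a Wright-type count (handled by Selberg--Delange, Theorem~\ref{tSelbergDelange}), and the outer sum converges because every $g\notin H(G)$ has $\mathrm{ord}(g)\geq p^2$ under the hypothesis, giving exponent strictly larger than $a(G)^{-1}$. Your proposal does not supply a mechanism to decouple the embedding obstruction from the free variables, and without it the ``Wright-style'' step does not start.
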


It is easy to construct many $2$-groups $G$ satisfying the hypotheses of Theorem \ref{tMalle}. Take for example any finite, abelian $2$-group $A$. Then there is an action of $\Z/2\Z$ on $A$ by inversion, which gives an action of $\Z/4\Z$ on $A$ by projecting first to $\Z/2\Z$. If one takes $G = A \rtimes \Z/4\Z$, then $G$ fulfills all the conditions of the theorem. Note that such $G$ can have arbitrarily large nilpotency class.

We remark that our techniques do not give an explicit handle on the constant $c > 0$ guaranteed by Theorem \ref{tMalle}. Even in the case where $G$ is cyclic of prime order it is a rather non-trivial task to provide an explicit value of the constant $c$, see \cite{CDO2}. The following classical result of Wright \cite{Wright} is an immediate corollary of the above theorem. Our proof is substantially shorter and makes only very limited use of class field theory.

\begin{corollary}
Let $K$ be a number field and let $A$ be a non-trivial, finite, abelian group. Let $p$ be the smallest prime divisor of $\# A$. Then there exists a constant $c > 0$ such that
\[
N(X, A, K) \sim c X^{\frac{p}{(p - 1) \# A}} (\log X)^{b(A, K) - 1},
\]
where $b(A, K)$ is the number of elements of order $p$ divided by $[K(\zeta_p) : K]$.
\end{corollary}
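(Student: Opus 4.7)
The plan is to derive the corollary as a direct specialisation of Theorem~\ref{tMalle}. The substance of the work is already packaged into that theorem; what remains is a short consistency check that the Malle constant $b$ takes the form stated in the corollary when $G$ is abelian.

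First I would verify the hypothesis of Theorem~\ref{tMalle} for $G = A$. Any finite abelian group is nilpotent (of class $\leq 1$) and every element is central, so in particular every element of order $p$ is central. Theorem~\ref{tMalle} therefore yields a constant $c > 0$ with
\[
N(X, A, K) \sim c X^{\frac{p}{(p-1)\#A}} (\log X)^{b(A,K) - 1}.
\]
The exponent of $X$ already matches the one asserted in the corollary, so only the logarithmic factor requires further inspection.

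Next I would unpack $b(A,K)$ from the definition given in the introduction. Since conjugacy classes in $A$ are singletons, the non-trivial classes $C$ with $c^p = \mathrm{id}$ for all $c \in C$ are in bijection with $A[p] \setminus \{0\}$, and the equivalence $C \sim C'$ becomes $a' = a^{\chi(\sigma)}$ for some $\sigma \in \Gal(\overline{K}/K)$. This action factors through $\Gal(K(\zeta_p)/K) \hookrightarrow (\Z/p\Z)^{\ast}$, and it acts \emph{freely} on $A[p] \setminus \{0\}$: if $a^k = a$ with $a$ of order $p$ and $k \in (\Z/p\Z)^{\ast}$, then $k \equiv 1 \pmod{p}$. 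Consequently every orbit has size $[K(\zeta_p):K]$, giving
\[
b(A,K) = \frac{\#A[p] - 1}{[K(\zeta_p):K]},
\]
which is exactly the number of elements of order $p$ divided by $[K(\zeta_p):K]$.

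There is no genuine obstacle here, since once Theorem~\ref{tMalle} is in hand the corollary reduces to pure bookkeeping. The only mild subtlety worth emphasising is the freeness of the cyclotomic action on $A[p] \setminus \{0\}$, which is what allows one to convert the orbit count into a clean quotient and thereby recover Wright's classical formula.
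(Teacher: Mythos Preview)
Your proposal is correct and matches the paper's approach: the corollary is stated there as an immediate consequence of Theorem~\ref{tMalle}, and your argument does exactly this specialisation. Note that the formula $b(G,K) = \#I(G)/[K(\zeta_p):K]$ under the hypothesis of Theorem~\ref{tMalle} is already asserted in that theorem's statement (and proved in general as Lemma~\ref{lbGKNil}), so your verification of the freeness of the cyclotomic action, while correct and worthwhile, is recapitulating work the paper has packaged elsewhere.
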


It is worth mentioning that the weak form of Malle's conjecture, which asserts that
\[
X^{a(G)} \ll N(X, G, K) \ll_\epsilon X^{a(G) + \epsilon},
\]
is much better understood. There are no known counterexamples to the weak form, even when $G$ is allowed to be an arbitrary finite group. The weak form is known for nilpotent groups by the work of Kl\"uners--Malle \cite{KlunersMalle}. Alberts \cite{Alberts1, Alberts2} and Alberts--O'Dorney \cite{AD} made further progress in the solvable case. Our parametrization of $G$-extensions immediately implies the following theorem, which improves, in case of nilpotent groups in the regular representation, on recent work of Kl\"uners--Wang \cite{KlunersWang} and Kl\"uners \cite{KlunersUpper}.

\begin{theorem}
\label{tUpper}
Let $K$ be a number field and let $G$ be a non-trivial, finite, nilpotent group. Let $p$ be the smallest prime divisor of $\# G$. Then
\[
N(X, G, K) \ll X^{\frac{p}{(p - 1) \# G}}  (\log X)^{i(G, K) - 1},
\]
where $i(G, K)$ is the number of elements of order $p$ in $G$ divided by $[K(\zeta_p) : K]$.
\end{theorem}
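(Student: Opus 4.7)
The plan is to deduce Theorem \ref{tUpper} from the parametrization of $G$-extensions constructed in the course of proving Theorem \ref{tMalle}, via a standard Euler-product/Tauberian argument. I would first reduce to the case where $G$ is a $p$-group. Since $G$ is nilpotent, $G = G_p \times H$ with $G_p$ the Sylow $p$-subgroup and $\gcd(\#H, p) = 1$. Every $G$-extension $L/K$ is a compositum $L = L_p \cdot L_H$ of linearly disjoint $G_p$- and $H$-subextensions, and the conductor-discriminant formula gives
\[
|N_{K/\Q}(\Delta(L/K))| = |N_{K/\Q}(\Delta(L_p/K))|^{\#H} \cdot |N_{K/\Q}(\Delta(L_H/K))|^{\#G_p}.
\]
Applying the $p$-group bound below to $L_p$ for fixed $L_H$ produces a factor proportional to $|\Delta(L_H/K)|^{-p/((p-1)\#H)}$, and partial summation against the polynomial bound of Kl\"uners-Malle \cite{KlunersMalle} for $H$ makes the resulting series over $L_H$ convergent, because $p/((p-1)\#H) > a(H) = p'/((p'-1)\#H)$ for the smallest prime $p' > p$ dividing $\#H$. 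Since every order-$p$ element of $G$ lies in $G_p$, $i(G_p, K) = i(G, K)$.

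For $G$ a $p$-group, the paper's parametrization attaches to each $G$-extension $L/K$ a finite set $S$ of tame ramification primes of $K$, an inertia class at each $\pp \in S$ (a cyclic order-$p$ subgroup of $G$ together with a generator modulo the cyclotomic action of $\Gal(K(\zeta_p)/K)$), and a bounded amount of auxiliary data (wild local data at primes above $p$ plus a global gluing parameter). The two features I need for the upper bound are that the parametrization is injective and that the number of global $G$-extensions producing a fixed tuple is $O(1)$. The local discriminant contribution of a tamely ramified prime with inertia of order $p$ is $N(\pp)^{(p-1)\#G/p}$, and the number of inertia classes at each such prime equals $i(G,K) = \#\{g \in G : g \ne 1,\ g^p = 1\}/[K(\zeta_p):K]$; higher-order inertia and wild ramification over $p$ give strictly larger per-prime exponents and are absorbed in an overall constant.

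The bound then falls out of a standard Tauberian argument. Dominating the counting Dirichlet series by the Euler product
\[
\sum_{L/K,\ \Gal(L/K) \cong G}\!\! |N_{K/\Q}(\Delta(L/K))|^{-s} \ll C \prod_{\pp\nmid p}\!\!\left(1 + i(G,K)\, N(\pp)^{-s(p-1)\#G/p} + O\!\left(N(\pp)^{-2s(p-1)\#G/p}\right)\right)\!,
\]
whose right-hand side has the same singularity at $s = p/((p-1)\#G)$ as $\zeta_K(s(p-1)\#G/p)^{i(G,K)}$, namely a pole of order $i(G,K)$; Landau's Tauberian theorem then yields the stated bound. The chief difficulty is the uniform $O(1)$-to-one property of the parametrization as the ramification set $S$ grows; this uniformity, which is the combinatorial core of the paper's framework, is exactly what promotes the per-prime count $i(G,K)$ to the exponent of $\log X$ in the conclusion.
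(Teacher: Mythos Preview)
Your strategy is sound and would yield the stated bound, but it is organized differently from the paper's proof and contains a few imprecisions worth flagging.

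\textbf{Reduction to $p$-groups.} The paper does \emph{not} reduce to the Sylow-$p$ subgroup first. Instead it treats a general nilpotent $G = \prod_j G(l_j)$ directly via the product identification $\textup{Epi}(G_K,G) = \prod_j \textup{Epi}(\mathcal{G}_K^{\textup{pro}-l_j},G(l_j))$ (Proposition~\ref{epi iff cordinate are epi}) and a single parametrization by tuples $(v_g)_{g\in G-\{\textup{id}\}}$. Your reduction is correct (the convergence check $p/((p-1)\#H) > a(H)$ is right), and it has the virtue of localizing the analysis; the paper's approach avoids invoking the Kl\"uners--Malle weak bound for $H$ as an external input.

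\textbf{Counting step.} The paper does not pass through a Dirichlet series and a Tauberian theorem. It bounds the number of epimorphisms by the number of tuples in $\textup{Prim}(\mathcal{S}^{G-\{\textup{id}\}})$ with $\prod_g |N_{K/\Q}(v_g(2))|^{e_g}\le X$, pulls out the variables $v_g$ with $\textup{ord}(g)>p$ (for which $e_g\,a(G)>1$), and applies the Selberg--Delange estimate (Theorem~\ref{tSelbergDelange}) directly to the remaining sum over order-$p$ variables; the outer sum then converges. Your Euler-product route reaches the same endpoint, but two details should be corrected. First, the local factor at a tame prime $\pp$ is not $1+i(G,K)N(\pp)^{-s(p-1)\#G/p}$ uniformly: the paper's parametrization assigns to each ramified $\pp$ a \emph{specific} element $g\in G-\{\textup{id}\}$ (of arbitrary order, not only $p$), and only primes $\pp$ with $N(\pp)\equiv 1\bmod p$ (i.e.\ $\pp\in\widetilde{\Omega}_K(l_G)$) contribute; the correct local factor is $1+\#I(G)\,N(\pp)^{-s(p-1)\#G/p}+(\text{higher-order terms})$ at such $\pp$ and $1$ otherwise. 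The cyclotomic denominator $[K(\zeta_p):K]$ enters through the density of $\widetilde{\Omega}_K(l_G)$, not by quotienting inertia classes. Second, ``Landau's Tauberian theorem'' gives the right main term only for simple (integer-order) poles; since $i(G,K)$ need not be an integer you should invoke Delange's theorem or the Selberg--Delange method, which is exactly what the paper packages in Theorem~\ref{tSelbergDelange}.

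\textbf{On the parametrization.} The map $P_G$ is a \emph{bijection} onto its image $\textup{Prim}(\mathcal{S}^{G-\{\textup{id}\}})(\textup{solv.})$, not merely $O(1)$-to-one; the upper bound comes from enlarging the target to all of $\textup{Prim}(\mathcal{S}^{G-\{\textup{id}\}})$ and dropping the solvability constraints. So the ``chief difficulty'' you identify is already resolved exactly (Proposition~\ref{It is a parametrization: in general}), not asymptotically.
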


The result in Theorem \ref{tUpper} is sharp up to logarithmic factors. It should be possible to use our techniques to prove a more general version of Theorem \ref{tUpper} valid for arbitrary representations of nilpotent groups, but we shall not pursue this further here. Note that the upper bound in Theorem \ref{tUpper} matches Malle's prediction precisely when we are in the situation of Theorem \ref{tMalle}. 

Our next theorem shows that we can achieve a (conjecturally) sharp upper bound, up to a constant factor, provided that the elements of order $p$ commute with each other.

\begin{theorem}
\label{tUpper2}
Let $K$ be a number field and let $G$ be a non-trivial, finite, nilpotent group. Let $p$ be the smallest prime divisor of $\# G$. Suppose that all elements of order $p$ commute with each other. Then
\[
N(X, G, K) \ll X^{\frac{p}{(p - 1) \# G}}  (\log X)^{b(G, K) - 1}.
\]
\end{theorem}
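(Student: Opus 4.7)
The plan is to refine the parametrization underlying the proof of Theorem~\ref{tUpper} by incorporating $G$-conjugacy of tame inertia generators, which under the present hypothesis becomes easy to control. Recall that $p$ is the smallest prime divisor of $\#G$ and that $i(G, K)$ counts elements of order $p$ modulo the cyclotomic action of $\Gal(K(\zeta_p)/K)$, whereas $b(G, K)$ counts such elements modulo both the cyclotomic action and $G$-conjugation; the upper bound in Theorem~\ref{tUpper} therefore already has the correct power of $X$ and overshoots the Malle exponent of $\log X$ only by a failure to identify $G$-conjugate inertia types.

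First I would observe that the hypothesis makes the set $H := \{g \in G : g^p = 1\}$ a normal elementary abelian $p$-subgroup of $G$, on which $G$ acts by conjugation through the finite quotient $G/C_G(H)$, while $\Gal(K(\zeta_p)/K)$ acts freely via the cyclotomic character $\chi$. These two actions commute. Consequently $i(G, K) = \#(H \setminus \{1\})/[K(\zeta_p) : K]$ equals the number of $\chi$-orbits in $H \setminus \{1\}$, and $b(G, K)$ is the number of orbits in $H \setminus \{1\}$ under the combined action of $G$ and $\Gal(K(\zeta_p)/K)$, so $b(G, K) \leq i(G, K)$ with the ratio being the average size of a $G$-orbit on $\chi$-orbits.

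Next I would revisit the parametrization: the factor $(\log X)^{i(G, K) - 1}$ arises because, at each tamely ramified prime $\pi$ of $K$ contributing optimally to the discriminant, the inertia generator is a non-identity element of $H$, summed over freely modulo $\chi$. The refinement is to observe that a simultaneous $G$-conjugation of all tame inertia data corresponds to the \emph{same} $G$-extension $L/K$ --- reflecting that the abstract identification $\Gal(L/K) \cong G$ is only canonical up to inner automorphism of $G$. Stratifying the upper-bound sum by $(G \times \Gal(K(\zeta_p)/K))$-orbits on $H \setminus \{1\}$, which number exactly $b(G, K)$, and running the same generating-series/Tauberian argument as in Theorem~\ref{tUpper}, each orbit contributes one factor of $\log X$, yielding $(\log X)^{b(G, K) - 1}$.

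The hard part will be to verify rigorously, within the framework set up for Theorem~\ref{tUpper}, that $G$-conjugate configurations of tame inertia data yield the same $L/K$: one must check compatibility of the $G$-action on inertia data with the global conditions (typically a cocycle or class-field-theoretic constraint) that cut out genuine $G$-extensions, and handle the interaction with wildly ramified primes above $p$ so that these contribute only a bounded factor and do not dilute the refinement. The commutativity of elements of order $p$ is crucial here: it ensures the conjugation action on tame inertia lands inside $H$ and does not entangle with non-$p$ parts of $G$, keeping the orbit combinatorics clean and the upper bound sharp up to constants.
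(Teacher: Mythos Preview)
Your proposal has a genuine gap: the mechanism you invoke cannot reduce the exponent of $\log X$. A \emph{simultaneous} $G$-conjugation of all inertia data indeed corresponds to composing the epimorphism $G_K\to G$ with an inner automorphism, hence to the same field $L/K$; but this identifies at most $\#\mathrm{Inn}(G)$ epimorphisms at a time, a constant factor. Your step ``stratify by $(G\times\Gal(K(\zeta_p)/K))$-orbits so that each orbit contributes one $\log X$'' would require that at each ramified prime one may independently replace the inertia element by any $G$-conjugate and still obtain the same extension. That is false: conjugating the inertia datum at one prime while leaving the others fixed produces a different epimorphism and, generically, a different field. So the orbit count $b(G,K)$ does not arise from over-counting in the parametrization.

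In the paper the saving has a completely different source: it comes from the \emph{solvability constraint} defining $\mathrm{Prim}(\mathcal{S}^{G-\{\mathrm{id}\}})(\mathrm{solv.})$. Since $H(G)=I(G)\cup\{\mathrm{id}\}$ is normal, the variables $v_g$ with $g\notin H(G)$ already determine a $G/H(G)$-extension $\psi'$. For a prime $\mathfrak q$ assigned to $h\in I(G)$, local solvability of the successive embedding problems forces $\psi'(\mathrm{Frob}_{\mathfrak q})\in\mathrm{Stab}_{G/H(G)}(h)$ (Proposition~\ref{contained in product set}, via Proposition~\ref{solvable iff commutative}). This is a Chebotarev condition of density $1/\#\mathrm{Conj}_G(h)$ on the primes allowed to divide $v_h$, and $\sum_{h\in I(G)}\frac{1}{\#\mathrm{Conj}_G(h)\,[K(\zeta_p):K]}=b(G,K)$ by Lemma~\ref{lbGKNil}. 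To make this effective one splits according to whether the heavy variables satisfy $\prod_{g\notin H(G)}|N_{K/\Q}v_g|^{e_g}\le(\log X)^{A}$; the large range is handled exactly as in Theorem~\ref{tUpper}, while in the small range $\psi'$ has conductor $\le(\log X)^{O(1)}$, so the Siegel--Walfisz theorem (Theorem~\ref{tCheb}) combined with the Rosser--Iwaniec sieve turns the Frobenius constraint into the correct density and yields the exponent $b(G,K)-1$. None of this is visible from the over-counting heuristic you propose.
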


To give examples of groups $G$ where Theorem \ref{tUpper2} applies (but Theorem \ref{tMalle} does not), consider an $\FF_2$ vector space $V$ of dimension $2^n$ and pick an ordered basis $\{b_0, \dots, b_{2^n - 1}\}$. Let $\Z/2^n\Z$ act on $V$ by cycling the ordered basis and extending linearly. Then we can take $G = V \rtimes \Z/2^{n + 1}\Z$, where $\Z/2^{n + 1}\Z$ acts on $V$ by first projecting to $\Z/2^n\Z$. Note that such $G$ can again have arbitrarily large nilpotency class.

There is also the related problem of counting the number of degree $n$ extensions with bounded discriminant, which was first treated by Schmidt \cite{Schmidt}. His upper bound was drastically improved by Ellenberg--Venkatesh \cite{EV}, Couveignes \cite{Couveignes} and Lemke Oliver--Thorne \cite{OlivierThorne}.

Malle's conjecture has strong ties with the Cohen--Lenstra conjectures \cite{cohen--lenstra}. There is the classical work of Davenport--Heilbronn \cite{DH} on $3$-torsion of class groups of quadratic fields, which was later extended by \cite{BST} and \cite{TT} in the form of a secondary main term. Davenport and Heilbronn obtain their results by counting certain $S_3$-extensions.

Fouvry and Kl\"uners \cite{FK1, fouvry--kluners} dealt with the $4$-rank of quadratic fields building on earlier work of Gerth \cite{Gerth}, and Heath-Brown \cite{HB} on $2$-Selmer groups. There is also a rich literature on upper bounds for $2$-torsion elements in class groups of which we mention \cite{KP2, KP3} for multiquadratic extensions, \cite{BSTTTZ} for $S_n$-extensions and \cite{Siad, Siad2} for monogenic $S_n$-extensions. Furthermore, the average size of the $2$-torsion of $S_n$-extensions has been determined in \cite{HSV} conditional on a tail estimate. Over function fields Malle's conjecture and the Cohen--Lenstra conjectures are better understood due to the results in \cite{ETW, EVW}.

Recently, Smith \cite{Smith1, Smith} dealt with the $2$-part of class groups of quadratic fields, which was extended by the authors to the $\ell$-part of class groups of degree $\ell$ cyclic fields \cite{KP1}. His techniques can be adapted to give a lower bound for the number of $D_{2^n}$-extensions of $\Q$ of the correct order of magnitude: this is another instance that highlights the clear ties between Malle's conjecture and the Cohen--Lenstra conjectures. It seems plausible that this can be extended to an asymptotic once one extends the results on ray class groups of Pagano--Sofos \cite{Pagano--Sofos}.

The paper is divided as follows. We start with some preliminaries in Section \ref{sPrelim}. We have opted to first treat the case of $2$-groups over $\Q$, which avoids some of the technical issues that we will face in the general case. The core of the paper is Section \ref{Parametrization}, where we provide a new parametrization of $2$-extensions over $\Q$. From this we deduce Theorem \ref{tUpper} and Theorem \ref{tMalle} in respectively Section \ref{sUpper} and Section \ref{sMainQ} still in the special case of $2$-extensions over $\Q$. 

We then generalize Section \ref{Parametrization} to arbitrary nilpotent groups and arbitrary number fields in Section \ref{sMain}. In Section \ref{sLocal conditions} we give a new heuristic in support of Malle's conjecture for nilpotent groups in their regular representation. Our main theorems are proven in full generality in Section \ref{sTheorems} with Section \ref{sAna} providing some analytic tools.

\subsection*{Acknowledgements}
The authors wish to thank the Max Planck Institute for Mathematics in Bonn for its great work conditions and an inspiring atmosphere. We are also grateful to user $2734364041$ of MathOverflow for answering a question related to Theorem \ref{tCheb}. The first named author gratefully acknowledges the Max Planck for financial support. The second named author gratefully acknowledges financial support through EPSRC Fellowship EP/P019188/1, ``Cohen--Lenstra heuristics, Brauer relations, and low-dimensional manifolds''. 

\section{Preliminaries and setup}
\label{sPrelim}
We use the abbreviation $[n] := \{1, \dots, n\}$ throughout the paper. For a set $S$ and a prime number $l$, we write $\FF_l^S$ for the free $\FF_l$ vector space on the set $S$. Let us fix a separable closure $\mathbb{Q}^{\text{sep}}$ of $\mathbb{Q}$ once and for all. All our number fields are implicitly taken inside this fixed separable closure. Furthermore, we write $G_K := \Gal(\mathbb{Q}^{\text{sep}}/K)$ for the absolute Galois group of a number field $K$. Similarly, for each prime $p$ we fix a separable closure $\mathbb{Q}_p^{\text{sep}}$ of $\mathbb{Q}_p$ (which allows us to define $G_K := \Gal(\mathbb{Q}_p^{\text{sep}}/K)$ for any extension $K$ of $\mathbb{Q}_p$ inside $\mathbb{Q}_p^{\text{sep}}$) together with an embedding
$$
i_p: \mathbb{Q}^{\text{sep}} \to \mathbb{Q}_p^{\text{sep}}.
$$
This yields an inclusion
$$
i_p^{*}: G_{\mathbb{Q}_p} \to G_{\mathbb{Q}}.
$$
We will denote by
$$
I_p := \Gal(\mathbb{Q}_p^{\text{sep}}/\mathbb{Q}_p^{\text{unr}})
$$
the inertia subgroup, where $\mathbb{Q}_p^{\text{unr}}$ is the maximal unramified extension of $\mathbb{Q}_p$. Write $\mathcal{P}$ for the collection of odd prime numbers and $\mathbb{Q}^{\text{pro}-2}$ for the compositum of all finite Galois extensions of degree a power of $2$. Let
$$
\mathcal{G}_{\mathbb{Q}}^{\text{pro}-2} := \text{Gal}(\mathbb{Q}^{\text{pro}-2}/\mathbb{Q})
$$
be the corresponding Galois group. Equipped with the Krull topology the group $\mathcal{G}_{\mathbb{Q}}^{\text{pro}-2}$ is by construction a pro-$2$ group. Let us denote by
$$
I_p(2) := \text{proj}(G_{\mathbb{Q}} \to \mathcal{G}_{\mathbb{Q}}^{\text{pro}-2}) \circ i_p^{*}(I_p).
$$
The next proposition describes $I_p(2)$ for $p \in \mathcal{P}$ and provides us with a convenient set of topological generators of $\mathcal{G}_{\mathbb{Q}}^{\text{pro}-2}$.

\begin{proposition} 
\label{cyclic inertia generators}
The group $I_p(2)$ is pro-cyclic for each $p \in \mathcal{P}$. Furthermore
$$
\overline{\langle \{I_p(2)\}_{p \in \mathcal{P} \cup \{2\}} \rangle} = \mathcal{G}_{\mathbb{Q}}^{\textup{pro}-2}.
$$
\end{proposition}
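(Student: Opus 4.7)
\textbf{Proof plan for Proposition \ref{cyclic inertia generators}.}

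For the pro-cyclicity of $I_p(2)$ with $p \in \mathcal{P}$, the plan is to invoke the classical structure of the local inertia group at an odd prime. Write $P_p \subseteq I_p$ for the wild inertia (the unique pro-$p$ Sylow subgroup of $I_p$), which is normal in $I_p$ with tame quotient $I_p/P_p \cong \prod_{\ell \neq p} \Z_\ell$. Since $I_p(2)$ sits inside the pro-$2$ group $\mathcal{G}_{\mathbb{Q}}^{\text{pro}-2}$ and $P_p$ is pro-$p$ with $p \neq 2$, the image of $P_p$ in $I_p(2)$ must vanish. Therefore $I_p(2)$ is a continuous quotient of the pro-$2$ part of $\prod_{\ell \neq p} \Z_\ell$, namely $\Z_2$, and is in particular pro-cyclic.

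For the generation claim, set $H := \overline{\langle \{I_p(2)\}_{p \in \mathcal{P} \cup \{2\}} \rangle}$ and let $H^{\ast}$ be the smallest closed \emph{normal} subgroup of $\mathcal{G}_{\mathbb{Q}}^{\text{pro}-2}$ containing $H$. I would first show $H^{\ast} = \mathcal{G}_{\mathbb{Q}}^{\text{pro}-2}$ by Minkowski's theorem. The fixed field $L := (\mathbb{Q}^{\text{pro}-2})^{H^{\ast}}$ is pro-$2$ and Galois over $\mathbb{Q}$. For each finite prime $p$ the chosen inertia $I_p(2)$ lies in $H^{\ast}$, and since $L/\mathbb{Q}$ is Galois all inertia subgroups at $p$ in $\Gal(L/\mathbb{Q})$ are conjugate, hence all act trivially on $L$. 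Thus $L/\mathbb{Q}$ is unramified at every finite prime, and Minkowski's theorem forces $L = \mathbb{Q}$ and $H^{\ast} = \mathcal{G}_{\mathbb{Q}}^{\text{pro}-2}$.

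To upgrade this to $H = \mathcal{G}_{\mathbb{Q}}^{\text{pro}-2}$, I would pass to the Frattini quotient of the pro-$2$ group $\mathcal{G}_{\mathbb{Q}}^{\text{pro}-2}$, which is an elementary abelian pro-$2$ group. Because this quotient is abelian, conjugation acts trivially and the images of $H$ and $H^{\ast}$ coincide; by the previous paragraph both equal the entire Frattini quotient. The Burnside basis theorem for pro-$2$ groups then yields $H = \mathcal{G}_{\mathbb{Q}}^{\text{pro}-2}$.

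The main obstacle I anticipate is precisely this last bookkeeping step: Minkowski's theorem naturally controls Galois subfields, and thereby closed \emph{normal} subgroups of $\mathcal{G}_{\mathbb{Q}}^{\text{pro}-2}$, whereas the proposition concerns the closed subgroup generated by the $I_p(2)$, which a priori need not be normal (distinct embeddings $i_p$ produce conjugate but not identical inertia groups). The pro-$2$ hypothesis is exactly what collapses this distinction via the Burnside basis theorem, turning a Minkowski-type normal-generation statement into the desired topological generation.
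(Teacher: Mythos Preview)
Your proposal is correct and follows essentially the same approach as the paper. Both arguments deduce pro-cyclicity from the tame inertia structure, and both reduce the generation statement to Minkowski's theorem via the observation that, in a pro-$2$ group, a set topologically generates if and only if its conjugates do---the paper states this as a standalone lemma justified by the Frattini/Burnside facts, while you implement the same idea explicitly through the pair $H \subseteq H^{\ast}$ and the passage to the abelian Frattini quotient.
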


\begin{proof}
The first part follows from the description of the maximal tame extension of $\mathbb{Q}_p$ for $p \in \mathcal{P}$, which immediately implies that the maximal pro-$2$-quotient of $I_p$ is isomorphic to $\Z_2$ for $p \in \mathcal{P}$. Then $I_p(2)$ is the image of a pro-cyclic group, hence pro-cyclic.

For the second part, recall that if $\mathcal{G}$ is a pro-$2$ group and $S \subseteq \mathcal{G}$ is a subset, then $S$ topologically generates $\mathcal{G}$ if and only if $\bigcup_{g \in \mathcal{G}} gSg^{-1}$ topologically generates $\mathcal{G}$. Indeed, this is a consequence of the following well-known facts
\begin{itemize}
\item a subset $S$ of a profinite group $\mathcal{G}$ topologically generates $\mathcal{G}$ if and only if $S$ generates in every continuous finite quotient of $\mathcal{G}$;
\item a subset $S$ of a finite group $G$ generates if and only if $S$ generates modulo the Frattini subgroup of $G$;
\item the Frattini subgroup equals $G^2 [G, G]$ for a finite $2$-group $G$.
\end{itemize}
By Minkowksi's theorem we know that $\mathbb{Q}^{\text{pro}-2}/\mathbb{Q}$ does not possess non-trivial unramified subextensions. Hence 
\[
\bigcup_{p \in \mathcal{P} \cup \{2\}} \bigcup_{\sigma \in I_p(2)} \bigcup_{g \in \mathcal{G}_{\mathbb{Q}}^{\text{pro}-2}} g \sigma g^{-1}
\]
topologically generates $\mathcal{G}_{\mathbb{Q}}^{\text{pro}-2}$. We conclude that $\{I_p(2)\}_{p \in \mathcal{P} \cup \{2\}}$ topologically generates $\mathcal{G}_{\mathbb{Q}}^{\text{pro}-2}$ as desired.
\end{proof}

We remark that the above proof in fact shows that if we find a subset $S \subseteq \mathcal{G}_{\mathbb{Q}}^{\textup{pro}-2}$ such that $S$ topologically generates modulo the Frattini subgroup of  $\mathcal{G}_{\mathbb{Q}}^{\text{pro}-2}$, then $S$ is a set of topological generators for $\mathcal{G}_{\mathbb{Q}}^{\text{pro}-2}$. We fix once and for all a topological generator $\sigma_p \in I_p(2)$ for all $p \in \mathcal{P}$. For $a \in \mathbb{Q}^\ast$, we write $\chi_a: G_\Q \rightarrow \FF_2$ for the quadratic character corresponding to $\Q(\sqrt{a})$. Then we pick $\sigma_2(1), \sigma_2(2) \in I_2(2)$ such that
\[
\chi_2(\sigma_2(1)) = 1, \quad \chi_2(\sigma_2(2)) = 0, \quad \chi_{-1}(\sigma_2(1)) = 0, \quad \chi_{-1}(\sigma_2(2)) = 1.
\]
Proposition \ref{cyclic inertia generators} together with the above remark tells us that the set $\{\sigma_p\}_{p \in \mathcal{P}} \cup \{\sigma_2(1), \sigma_2(2)\}$ is a set of topological generators for the Galois group $\mathcal{G}_{\mathbb{Q}}^{\text{pro}-2}$.

Let $L/\mathbb{Q}$ now be a finite Galois extension with Galois group $G := \text{Gal}(L/\mathbb{Q})$. Fix a $2$-cocycle $\theta$
$$
\theta: G^2 \to \mathbb{F}_2
$$
with the requirement that $\theta(\text{id}, \text{id}) = 0$. Note that every class in $H^2(G, \FF_2)$ contains such a $2$-cocycle, since if we let $\phi: G \rightarrow \FF_2$ be the constant non-trivial map, then for every $2$-cocycle $\theta'$, exactly one of $\theta'$ and $\theta'+ \diff \phi$ has the required property. Then one can form the group 
$$
(\mathbb{F}_2 \times G,*_{\theta}),
$$
where the group law is
$$
(a_1, g_2)*_{\theta}(a_2, g_2)=(a_1+a_2+\theta(g_1, g_2), g_1g_2).
$$
Our assumption on $\theta$ makes sure that $(0, \text{id})$ is the trivial element of $(\mathbb{F}_2 \times G,*_{\theta})$: for a general $2$-cocycle the identity element is $(\theta(\text{id},\text{id}),\text{id})$. The following proposition plays a key role in the parametrization of $G$-extensions.

\begin{proposition} 
\label{defining normalized phi}
Let $L/\Q$ be a Galois extension with $G := \Gal(L/\Q)$. Suppose that $\theta$ is non-trivial in $H^2(G, \mathbb{F}_2)$. \\
$(a)$ The natural projection map $\pi: G_{\mathbb{Q}} \twoheadrightarrow G$ can be lifted to a surjective homomorphism
$$
\psi: G_{\mathbb{Q}} \to (\mathbb{F}_2 \times G,*_{\theta})
$$
if and only if $\textup{inv}_p(\theta) = 0$ for each prime $p$ ramifying in $L/\mathbb{Q}$. Furthermore, for all lifts $\psi$, the $\mathbb{F}_2$-coordinate of $\psi$ is a continuous $1$-cochain $\phi(\psi): G_\Q \rightarrow \FF_2$ with
$$
\diff(-\phi(\psi)) = \theta,
$$
where $\diff(\phi(\psi))(\sigma, \tau) := \phi(\psi)(\sigma) + \phi(\psi)(\tau) - \phi(\psi)(\sigma \tau)$. Conversely, for any continuous $1$-cochain $\phi: G_\Q \rightarrow \FF_2$ with $\diff(-\phi) = \theta$ the assignment
$$
\psi(\phi)(g) = (\phi(g), \pi(g))
$$
is an epimorphism lifting the canonical projection $\pi: G_{\mathbb{Q}} \to G$ to an epimorphism $G_{\mathbb{Q}} \to (\mathbb{F}_2 \times G,*_{\theta})$. The two assignments are mutual inverses. \\
$(b)$ In case one has a lift $\psi$ as in part $(a)$, then there is a unique one satisfying
$$
\phi(\psi)(\sigma_p) = 0 \textup{ for all } p \in \mathcal{P} \textup{ and } \phi(\psi)(\sigma_2(1)) = \phi(\psi)(\sigma_2(2)) = 0.
$$
\end{proposition}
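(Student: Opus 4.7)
For part (a), I would begin by establishing the correspondence between lifts $\psi$ of $\pi$ and $1$-cochains $\phi$ satisfying $d(-\phi) = \pi^*\theta$: the map $g \mapsto (\phi(g), \pi(g))$ is a homomorphism into $(\FF_2 \times G, *_\theta)$ exactly when $\phi(gh) = \phi(g) + \phi(h) + \theta(\pi(g), \pi(h))$, and the assignments $\psi \mapsto \phi(\psi)$ (projection to the first coordinate) and $\phi \mapsto \psi(\phi)$ are visibly mutual inverses. The existence of such a lift is controlled by the obstruction class $\pi^*[\theta] \in H^2(G_\Q, \FF_2) = \textup{Br}(\Q)[2]$, which by Hasse--Brauer--Noether is trivial if and only if all its local invariants vanish. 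At every place $v$ unramified in $L/\Q$, the map $G_{\Q_v} \to G$ factors through the pro-cyclic unramified quotient of $G_{\Q_v}$, a group of cohomological dimension one, so $\textup{inv}_v(\theta) = 0$ automatically; this reduces the condition to the ramified places.

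To upgrade a lift to a surjection, observe that if the image $H \subseteq (\FF_2 \times G, *_\theta)$ is proper yet still surjects onto $G$, then $H \cap (\FF_2 \times \{\textup{id}\})$ must be trivial (being a proper subgroup of the order-two kernel), so $H \to G$ is an isomorphism; this produces a section of the central extension, contradicting $[\theta] \neq 0$. For part (b), any two lifts differ by $(\alpha(g), 0)$ where $\alpha \in \textup{Hom}_{\textup{cts}}(G_\Q, \FF_2)$: centrality of $\FF_2 \times \{\textup{id}\}$ forces the discrepancy to be a homomorphism. Consequently the set of lifts is a torsor under this Hom-group. Uniqueness of the normalized lift is then immediate from Proposition \ref{cyclic inertia generators}, since a continuous homomorphism to $\FF_2$ vanishing on every $\sigma_p$ and on $\sigma_2(1), \sigma_2(2)$ vanishes on a topological generating set of $\mathcal{G}_\Q^{\textup{pro-}2}$.

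For existence of the normalized lift, I would start from any lift $\psi_0$ provided by part (a) and construct an $\alpha \in \textup{Hom}_{\textup{cts}}(G_\Q, \FF_2)$ realizing the adjustment. The essential input is that $\phi(\psi_0)(\sigma_p) = 0$ for all but finitely many $p \in \mathcal{P}$: the extension cut out by $\psi_0$ is ramified at only finitely many primes, and at unramified $p$ we have $\psi_0(\sigma_p) = (0, \textup{id})$. Setting $S := \{p \in \mathcal{P} : \phi(\psi_0)(\sigma_p) = 1\}$ and $\alpha_0 := \chi_{\prod_{p \in S} p}$, one checks that $\chi_q(\sigma_p) = \delta_{p,q}$ for odd $p, q \in \mathcal{P}$ and that $\chi_{-1}, \chi_2$ are unramified at every odd prime, so $\alpha_0$ already matches $\phi(\psi_0)$ on every odd inertia generator. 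The remaining freedom to add $\delta_0 \chi_2 + \varepsilon \chi_{-1}$ preserves the values at the $\sigma_p$ while shifting $(\alpha(\sigma_2(1)), \alpha(\sigma_2(2)))$ by $(\delta_0, \varepsilon)$, thanks to the normalization of $\sigma_2(1), \sigma_2(2)$ as a dual basis to $\{\chi_2, \chi_{-1}\}$, pinning down $\delta_0$ and $\varepsilon$ uniquely. The main obstacle, to my mind, is precisely this matching step at the prime $2$: after the odd part of $\alpha$ is rigidified, one must still check that the two-dimensional residual freedom inside $\langle \chi_2, \chi_{-1} \rangle$ can realize any prescribed pair of values at $(\sigma_2(1), \sigma_2(2))$, which is exactly what the dual-basis normalization set up at the end of Section \ref{sPrelim} provides.
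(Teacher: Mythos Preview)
Your argument is correct and follows essentially the same route as the paper: the cochain/homomorphism correspondence, the splitting-contradiction for surjectivity, the local vanishing at unramified places via factoring through $\hat{\Z}$, and the torsor-under-characters description of the set of lifts are all exactly what the paper does (the paper cites \cite[Proposition 4.4]{KP1} for the unramified step and phrases surjectivity via the Frattini quotient, but the content is identical). One small omission: after reducing to the ramified \emph{places}, you still need one line of Hilbert reciprocity ($\sum_v \textup{inv}_v = 0$) to drop the possibly ramified archimedean place and match the proposition's statement about finite primes only. The paper's existence argument in~(b) uses the characters $\chi_{p^\ast}$ (with $p^\ast = (-1)^{(p-1)/2}p$) so that $\{\chi_2,\chi_{-1}\}\cup\{\chi_{p^\ast}\}$ is literally dual to the chosen generators and a single subtraction suffices; your two-step adjustment with $\chi_p$ followed by a correction in $\langle \chi_2,\chi_{-1}\rangle$ is an equally valid variant.
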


\begin{proof}
For part $(a)$, observe that a map $\phi(\psi): G_{\mathbb{Q}} \to \mathbb{F}_2$ is the first coordinate of a homomorphism
$$
\psi: G_{\mathbb{Q}} \twoheadrightarrow (\mathbb{F}_2 \times G,*_{\theta}), \quad g \mapsto (\phi(\psi)(g), \pi(g))
$$
if and only if
$$
\diff(-\phi(\psi)) = \theta.
$$
Indeed, in order for $\psi$ to be a homomorphism, it is necessary and sufficient that
$$
(\phi(\psi)(g_1g_2), \pi(g_1g_2)) = \psi(g_1g_2) = \psi(g_1) \psi(g_2) = (\phi(\psi)(g_1) + \phi(\psi)(g_2) + \theta(g_1, g_2), \pi(g_1g_2)),
$$
which is in turn equivalent to
$$
\diff(-\phi(\psi))(g_1, g_2) = \phi(\psi)(g_1g_2) - \phi(\psi)(g_1) - \phi(\psi)(g_2) = \theta(g_1, g_2).
$$
Let $\phi(\psi): G_\Q \rightarrow \FF_2$ be a continuous $1$-cochain such that $\diff(-\phi(\psi)) = \theta$. We claim that $(\phi(\psi), \pi)$ is surjective. Take a non-trivial character $\chi: (\mathbb{F}_2 \times G,*_{\theta}) \rightarrow \mathbb{F}_2$. We further claim that $\chi$ comes from $G$. Indeed, if not, then the kernel of $\chi$ would readily provide a splitting of $\theta$, contrary to our assumption that $\theta$ is non-trivial. Since $\psi$ is surjective, there exists $g \in G$ with $\chi(g) = 1$. This implies that the image of $(\phi(\psi), \pi)$ generates modulo the Frattini subgroup of $(\mathbb{F}_2 \times G,*_{\theta})$ and therefore $(\phi(\psi), \pi)$ is surjective as claimed.

Furthermore, we see that the lifting $\psi$ exists if and only if the inflation of $\theta$ to $H^2(G_{\mathbb{Q}}, \mathbb{F}_2)$ is trivial, which happens if and only if $\theta$ is trivial in $H^2(G_{{\mathbb{Q}}_v}, \mathbb{F}_2)$ for every place $v$ in $\mathcal{P} \cup \{2\}$: the unique archimedean place of $\mathbb{Q}$ is then guaranteed by Hilbert reciprocity. Thanks to \cite[Proposition 4.4]{KP1}, the vanishing at the places unramified in $L/\Q$ is already guaranteed: notice that in \cite[Section 4]{KP1} the number $l$ is assumed to be an odd prime but Proposition \cite[Proposition 4.4]{KP1} also holds for $l = 2$ with an identical proof. This ends the proof of part $(a)$. 

We now turn to part $(b)$. The uniqueness follows immediately from part $(a)$ and the fact that $\{\sigma_p\}_{p \in \mathcal{P}} \cup \{\sigma_2(1),\sigma_2(2)\}$ is a system of topological generators for $\mathcal{G}_{\mathbb{Q}}^{\text{pro}-2}$. Indeed, an epimorphism $\psi$ as in part $(a)$ is entirely determined by its values on a set of topological generators. It remains to establish the existence of $\phi(\psi)$, where we shall use that $\{\sigma_p\}_{p \in \mathcal{P}} \cup \{\sigma_2(1),\sigma_2(2)\}$ is a minimal set of topological generators. Choose a map $\phi: G_\Q \rightarrow \mathbb{F}_2$ satisfying 
$$
\diff(-\phi) = \theta. 
$$
The resulting epimorphism $\psi(\phi):\mathcal{G}_{\mathbb{Q}}^{\text{pro}-2} \twoheadrightarrow (\mathbb{F}_2 \times G, *_{\theta})$ corresponds to a finite extension. Therefore we have $\phi(\sigma_{p}) = 0$ for all but finitely many $p \in \mathcal{P}$. Define for $p \in \mathcal{P}$
\[
p^\ast = (-1)^{\frac{p - 1}{2}} p.
\]
Now the sum
$$
\phi(\sigma_2(1)) \cdot \chi_2 + \phi(\sigma_2(2)) \cdot \chi_{-1} + \sum_{p \in \mathcal{P}} \phi(\sigma_{p}) \cdot \chi_{p^{*}}
$$
is a well-defined element of $H^1(G_{\mathbb{Q}}, \mathbb{F}_2)$. Hence, since $\{\sigma_2(1),\sigma_2(2)\} \cup \{\sigma_p\}_{p \in \mathcal{P}}$ and $\{\chi_2,\chi_{-1}\} \cup \{\chi_{p^*}\}_{p \in \mathcal{P}}$ are dual to each other, we obtain that
$$
\phi + \phi(\sigma_2(1)) \cdot \chi_2+\phi(\sigma_2(2)) \cdot \chi_{-1} + \sum_{p \in \mathcal{P}} \phi(\sigma_{p}) \cdot \chi_{p^{*}}
$$
vanishes at $\sigma_2(1),\sigma_2(2)$ and at $\sigma_p$ for all $p \in \mathcal{P}$, which completes the proof of part $(b)$. 
\end{proof}

We denote the unique $1$-cochain as in part $(b)$ of Proposition \ref{defining normalized phi} by $\phi(G, \theta)$. In case $\theta$ is trivial as a $2$-cocycle, then we define $\phi(G, \theta) := 0$. With this convention for $\phi(G, \theta)$, we observe that it still satisfies the conclusion of Proposition \ref{defining normalized phi} part (b), since there is exactly one quadratic character (namely the trivial character) vanishing on the system of topological generators $\{\sigma_1(2),\sigma_2(2)\} \cup \{\sigma_p\}_{p \in \mathcal{P}}$.

Write $\mathcal{S}$ for the set of squarefree integers (possibly negative). In what follows we identify $\mathcal{S}$ with
$$
H^1(\mathcal{G}_{\mathbb{Q}}^{\text{pro}-2}, \mathbb{F}_2)
$$
via the assignment
$$
d \mapsto \chi_d. 
$$
We denote by $\text{Prim}(\mathcal{S}^{\mathbb{F}_2^A - \{(0, \ldots, 0)\}})$ the subset of $\mathcal{S}^{\mathbb{F}_2^A - \{(0, \ldots, 0)\}}$ consisting of pairwise coprime squarefree integers, where coprimality for a pair of squarefree integers $(a_1, a_2)$ here means also that not both $a_1$ and $a_2$ are negative. We conclude this section by giving a bijection
$$
\text{Pow}(A): \text{Prim}(\mathcal{S}^{\mathbb{F}_2^A - \{(0, \ldots, 0)\}}) \to \mathcal{S}^A,
$$
which sends a vector $(v_B)_{\emptyset \neq B \subseteq A}$ to
$$
\text{Pow}(A)((v_B)_{\emptyset \neq B \subseteq A}) := \left(\prod_{j \in B} v_B\right)_{j \in A}.
$$
Here we implicitly identify the space $\mathbb{F}_2^A - \{(0, \ldots, 0)\}$ with the non-empty subsets of $A$. To see that this map is a bijection, we construct the inverse map. For a vector $(v_j)_{j \in A} \in \mathcal{S}^A$ and a prime $p$, we call the $p$-support of $(v_j)_{j \in A}$ to be the subset of $A$ consisting of those $j$ such that $p \mid v_j$. We also define the $-1$-support of $(v_j)_{j \in A}$ to be the subset of $A$ consisting of those $j$ such that $v_j < 0$. We now consider the assignment 
$$
(v_j)_{j \in A} \mapsto (w_B)_{\emptyset \neq B \subseteq A},
$$
where $w_B$ is the product of the elements $t \in \mathcal{P} \cup \{-1, 2\}$ such that $(v_j)_{j \in A}$ has $t$-support equal to $B$. It is readily verified that this assignment and $\text{Pow}(A)$ are inverse to each other.  

\section{The parametrization} 
\label{Parametrization}
Fix a positive integer $r$. We call a sequence of pairs
$$
\{(G_i, \theta_i)\}_{i \in [r]}
$$
an \emph{admissible sequence} if it obeys the following inductive rules. For each $i \in [r]$, we demand that $G_i$ is a $2$-group and that $\theta_i$ is a $2$-cocycle 
$$
\theta_i: G_{i - 1}^2 \to \mathbb{F}_2
$$
with $\theta_i(\text{id}, \text{id}) = 0$ and with $\theta_i$ vanishing in $H^2(G_{i-1},\mathbb{F}_2)$ if and only if $\theta_i$ is the zero map. Here $G_0$ is the trivial group by convention. Furthermore, we require that
$$
G_i = (\mathbb{F}_2 \times G_{i - 1}, *_{\theta_i})
$$
for all $i \in [r]$. Fix for the remainder of this section an admissible sequence $\{(G_i, \theta_i)\}_{i \in [r]}$. We denote in what follows 
$$
G := G_r. 
$$
The goal of this section is to construct a surjective map
$$
P_G: \text{Prim}(\mathcal{S}^{G - \{\text{id}\}}) \twoheadrightarrow \text{Epi}_{\text{top.gr.}}(\mathcal{G}_{\mathbb{Q}}^{\text{pro}-2}, G) \cup \{\bullet\},
$$
which restricts to a bijection between
$$
\text{Prim}(\mathcal{S}^{G - \{\text{id}\}})(\text{solv.}):=P_G^{-1}(\text{Epi}_{\text{top.gr.}}(\mathcal{G}_{\mathbb{Q}}^{\text{pro}-2}, G))
$$
and $\text{Epi}_{\text{top.gr.}}(\mathcal{G}_{\mathbb{Q}}^{\text{pro}-2}, G)$. Furthermore, we explain how to read the ramification data on the right hand side from the left hand side of this parametrization. Here we recall that $\text{Prim}(\mathcal{S}^{G - \{\text{id}\}})$ denotes the subset of $\mathcal{S}^{G - \{\text{id}\}}$ consisting of pairwise coprime integers, where we recall that coprimality of two integers here also excludes that they are both negative.  

We start by defining a map
$$
\tilde{P}_G:\mathcal{S}^{r} \to \text{Epi}_{\text{top.gr.}}(\mathcal{G}_{\mathbb{Q}}^{\text{pro}-2}, G) \cup \{\bullet\}
$$
as follows. Let $v := (v_1, \ldots, v_r)$ be an element of $\mathcal{S}^{r}$. First of all in case $\chi_{v_1}$ is the trivial character, we declare $\tilde{P}_G(v) = \bullet$. So assume that $\chi_{v_1}$ is non-trivial. Equivalently,
$$
\chi_{v_1} \in \text{Epi}_{\text{top.gr.}}(\mathcal{G}_{\mathbb{Q}}^{\text{pro}-2}, G_1).
$$
Hence 
$$
\chi_{v_1}^*(\theta_2) := \theta_2(\chi_{v_1}(\sigma), \chi_{v_1}(\tau))
$$ 
is now a $2$-cocycle on $G_{\mathbb{Q}}$. If $\chi_{v_1}^*(\theta_2)$ is non-trivial in $H^2(G_{\mathbb{Q}}, \mathbb{F}_2)$, then we declare $\tilde{P}_G(v) = \bullet$. Now assume that $\chi_{v_1}^*(\theta_2)$ is zero in $H^2(G_{\mathbb{Q}}, \mathbb{F}_2)$; we distinguish two cases. If $\theta_2$ is already a trivial $2$-cocycle on $G_1$, we have that $\phi(G_1, \chi_{v_1}^{*}(\theta_2)) = 0$ and
$$
(\chi_{v_2}, \chi_{v_1}) \in \text{Epi}_{\text{top.gr.}}(\mathcal{G}_{\mathbb{Q}}^{\text{pro}-2}, G_2)
$$
if and only if $\chi_{v_1}$ and $\chi_{v_2}$ are linearly dependent. In case $\chi_{v_1}$ and $\chi_{v_2}$ are linearly dependent, we set $\tilde{P}_G(v) = \bullet$; otherwise we continue. If instead $\theta_2$ is a non-trivial $2$-cocycle on $G_1$, we always have that
$$
(\phi(G_1, \chi_{v_1}^{*}(\theta_2)) + \chi_{v_2}, \chi_{v_1}) \in \text{Epi}_{\text{top.gr.}}(\mathcal{G}_{\mathbb{Q}}^{\text{pro}-2}, G_2)
$$
by Proposition \ref{defining normalized phi}.

Now we continue in this fashion inductively. At step $i < r$ we have either already assigned $v$ to $\bullet$, or we have obtained an epimorphism $\psi_i \in \text{Epi}_{\text{top.gr.}}(\mathcal{G}_{\mathbb{Q}}^{\text{pro}-2}, G_i)$. Then we get a $2$-cocycle $\psi_i^*(\theta_{i + 1})$, which gives a class in $H^2(G_{\mathbb{Q}}, \mathbb{F}_2)$. If this class is non-trivial in $H^2(G_{\mathbb{Q}}, \mathbb{F}_2)$, we send $v$ to $\bullet$. 

In case $\psi_i^*(\theta_{i + 1})$ is trivial in $H^2(G_{\mathbb{Q}}, \mathbb{F}_2)$, we distinguish two cases. If $\psi_i^*(\theta_{i + 1})$ is already trivial in $H^2(G_i, \mathbb{F}_2)$, we have that $\phi(G_i, \psi_i^*(\theta_{i + 1})) = 0$. Then we have that
\[
(\chi_{v_{i + 1}}, \psi_i) \in \text{Epi}_{\text{top.gr.}}(\mathcal{G}_{\mathbb{Q}}^{\text{pro}-2}, G_{i + 1})
\]
if and only if $\chi_{v_{i + 1}}$ is linearly independent from the characters $\chi_{v_j}$ with $j$ satisfying 
\[
\phi(G_{j - 1}, \psi_{j - 1}^*(\theta_j)) = 0.
\]
Indeed, observe that in this case we have $G_{i+1}=\mathbb{F}_2 \times G_i$, therefore, since we have surjectivity if and only if we have surjectivity modulo the Frattini, we need to have that $\chi_{v_{i+1}}$ is linearly independent from the quadratic characters coming from $G_i$. Thus our claim comes down to the claim that such quadratic characters are precisely spanned by the set
$$
\{\chi_{v_j}\}_{j \leq i: \theta_j = 0}.
$$
This is justified by the following simple observation. Let $H$ be a finite $2$-group and $\theta:H^2 \to \mathbb{F}_2$ a $2$-cocycle. Then $\theta$ is trivial in $H^2(H,\mathbb{F}_2)$ if and only if the dimension of $(\mathbb{F}_2 \times H,*_{\theta})$ modulo its Frattini subgroup is one larger than that of $H$ modulo its Frattini subgroup. To see the non-trivial direction observe that if one takes a quadratic character $\chi:(\mathbb{F}_2 \times H,*_{\theta}) \to \mathbb{F}_2$ that does not come from $H$, then its kernel gives a splitting of the sequence. 

If $\chi_{v_{i + 1}}$ is linearly dependent on these characters $\chi_{v_j}$, we send $v$ to $\bullet$ and otherwise we go to step $i + 1$.

Now suppose that $\theta_{i + 1}$ was a non-trivial class of $H^2(G_i, \mathbb{F}_2)$. Then we always obtain by means of Proposition \ref{defining normalized phi} a new epimorphism
$$
(\phi(G_i, \psi_i^*(\theta_{i + 1})) + \chi_{v_{i + 1}}, \psi_i) \in \text{Epi}_{\text{top.gr.}}(\mathcal{G}_{\mathbb{Q}}^{\text{pro}-2}, G_{i + 1}).
$$
Continuing in this fashion we obtain either $\bullet$ or an element of 
$$
\text{Epi}_{\text{top.gr.}}(\mathcal{G}_{\mathbb{Q}}^{\text{pro}-2}, G),
$$
which is by definition $\tilde{P}_G(v)$. We put
$$
P_G := \tilde{P}_G \circ \text{Pow}([r]). 
$$
We remark that $\phi(G_i, \theta)$ was defined only when $G_i$ was a Galois group. However, this small abuse of notation does not present any issues, since the epimorphism $\psi_i$ realizes the implicit identification between $G_i$ and the corresponding Galois group. 

Also we remark that in the construction of the map $P_G$ we have implicitly used that $G$ is set-theoretically defined to be $\mathbb{F}_2^r$, with the identity element being $(0, \ldots, 0)$, thanks to our convention on $2$-cocycles vanishing on $(\text{id}, \text{id})$. Hence there is no abuse of notation in invoking the map $\text{Pow}([r])$. 

\begin{proposition} 
\label{It is a parametrization}
The map 
$$
P_G: \textup{Prim}(\mathcal{S}^{G - \{\textup{id}\}}) \twoheadrightarrow \textup{Epi}_{\textup{top.gr.}}(\mathcal{G}_{\mathbb{Q}}^{\textup{pro}-2}, G) \cup \{\bullet\}
$$
is a surjection, which restricts to a bijection between $\textup{Prim}(\mathcal{S}^{G - \{\textup{id}\}})(\textup{solv.})$ and surjective homomorphisms $\textup{Epi}_{\textup{top.gr.}}(\mathcal{G}_{\mathbb{Q}}^{\textup{pro}-2}, G)$.
\end{proposition}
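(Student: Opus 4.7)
The plan is induction on $r$. Since $\textup{Pow}([r])$ is a bijection between $\textup{Prim}(\mathcal{S}^{G - \{\textup{id}\}})$ and $\mathcal{S}^{[r]}$, we may replace $P_G$ by $\tilde{P}_G\colon \mathcal{S}^{[r]} \to \textup{Epi}_{\textup{top.gr.}}(\mathcal{G}_{\mathbb{Q}}^{\textup{pro}-2}, G) \cup \{\bullet\}$ throughout and establish the analogous surjection/bijection properties for $\tilde{P}_G$. The base case $r = 1$ is immediate: $\tilde{P}_{G_1}(v_1) = \chi_{v_1}$, and $v_1 \mapsto \chi_{v_1}$ restricts to a bijection between $\mathcal{S} \setminus \{1\}$ and $\textup{Epi}_{\textup{top.gr.}}(\mathcal{G}_{\mathbb{Q}}^{\textup{pro}-2}, \mathbb{F}_2)$.

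For the inductive step, write $v = (v', v_r) \in \mathcal{S}^{[r-1]} \times \mathcal{S}$. By the very construction of $\tilde{P}_G$, if $\tilde{P}_{G_{r-1}}(v') = \bullet$ then $\tilde{P}_G(v) = \bullet$; otherwise, setting $\psi_{r-1} := \tilde{P}_{G_{r-1}}(v')$, any output $\tilde{P}_G(v) \in \textup{Epi}$ automatically projects to $\psi_{r-1}$ along the quotient $G_r \twoheadrightarrow G_{r-1}$. By the induction hypothesis it therefore suffices, for each fixed $\psi_{r-1}$, to show that the partial map $v_r \mapsto \tilde{P}_G(v', v_r)$ is surjective onto the set of surjective lifts of $\psi_{r-1}$ together with $\{\bullet\}$, and restricts to a bijection on the preimage of the lifts.

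A three-way case analysis now follows. (i) If $\psi_{r-1}^{*}(\theta_r)$ is non-trivial in $H^2(G_{\mathbb{Q}}, \mathbb{F}_2)$, Proposition \ref{defining normalized phi}(a) rules out any lift, and by construction $\tilde{P}_G(v', v_r) = \bullet$ for every $v_r$, so both sides are empty. (ii) If $\theta_r$ is the zero cocycle on $G_{r-1}$, then $G_r = \mathbb{F}_2 \times G_{r-1}$ is split, so a lift $(\chi, \psi_{r-1})$ is surjective exactly when $\chi$ lies outside the subspace $V \subseteq H^1(G_{\mathbb{Q}}, \mathbb{F}_2)$ of quadratic characters factoring through $\psi_{r-1}$; setting $\chi = \chi_{v_r}$ and invoking the bijection $\mathcal{S} \cong H^1(G_{\mathbb{Q}}, \mathbb{F}_2)$, this yields the claimed bijection as soon as one identifies $V = \textup{span}\{\chi_{v_j} : j \leq r-1,\ \theta_j = 0\}$. (iii) If $\theta_r$ represents a non-trivial class in $H^2(G_{r-1}, \mathbb{F}_2)$, Proposition \ref{defining normalized phi} implies that every lift of $\psi_{r-1}$ is automatically surjective and uniquely of the form $(\phi(G_{r-1}, \psi_{r-1}^{*}(\theta_r)) + \chi_{v_r}, \psi_{r-1})$ for some $v_r \in \mathcal{S}$; the map $v_r \mapsto \tilde{P}_G(v', v_r)$ is then the identification $\mathcal{S} \cong H^1$ composed with a torsor shift, hence bijective onto all lifts.

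The main technical point is the identification of $V$ in case (ii). By the inductive construction, each $\chi_{v_j}$ with $j \leq r-1$ and $\theta_j = 0$ is the pullback, via $\psi_j$, of the first-coordinate character of $G_j = \mathbb{F}_2 \times G_{j-1}$, and since $\psi_j$ factors through $\psi_{r-1}$, each such $\chi_{v_j}$ lies in $V$. By the linear-independence tests imposed by the construction at every step $j$ with $\theta_j = 0$, these characters are linearly independent in $H^1(G_{\mathbb{Q}}, \mathbb{F}_2)$; and by the Frattini-dimension observation recalled in the excerpt, their number matches $\dim_{\mathbb{F}_2} \textup{Hom}(G_{r-1}, \mathbb{F}_2) = \dim_{\mathbb{F}_2} V$, using that pullback along the surjection $\psi_{r-1}$ is injective on characters. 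This forces equality of the two subspaces and closes the induction.
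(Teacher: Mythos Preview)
Your proof is correct and follows essentially the same approach as the paper: the paper's one-line proof (``immediate consequence of the construction of $P_G$ and Proposition~\ref{defining normalized phi}'') is exactly your inductive three-case analysis, since the construction of $\tilde{P}_G$ is itself inductive and the paper already justifies the identification of $V$ (your main technical point in case~(ii)) in the paragraph of the construction where the linear-independence test is introduced. You have simply made explicit what the paper leaves implicit.
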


\begin{proof}
This is an immediate consequence of the construction of the map $P_G$ and Proposition \ref{defining normalized phi}. 
\end{proof}

The parametrization $P_G$ allows us to read off very neatly the image of the topological generators $\{\sigma_p\}_{p \in \mathcal{P}}$ from the tuples of squarefree integers. 

\begin{proposition} 
\label{Reading off inertia}
Let $(v_g)_{g \in G - \{\textup{id}\}}$ be an element of $\textup{Prim}(\mathcal{S}^{G - \{\textup{id}\}})(\textup{solv.})$. Let $p \in \mathcal{P}$. If $p \mid v_{g_0}$ for a (necessarily) unique $g_0 \in G - \{\textup{id}\}$ then
$$
P_G((v_g)_{g \in G - \{\textup{id}\}})(\sigma_p) = g_0.
$$
If $p$ does not divide any of the elements of the vector $(v_g)_{g \in G - \{\textup{id}\}}$, then 
$$
P_G((v_g)_{g \in G - \{\textup{id}\}})(\sigma_p)=\textup{id}.
$$
\end{proposition}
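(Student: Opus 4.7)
The plan is to trace the image of $\sigma_p$ through the inductive construction of $\tilde{P}_G$, exploit the normalization of $\phi(G_i, \cdot)$ from Proposition \ref{defining normalized phi}(b), and then convert the resulting quadratic character values into divisibility statements. Write $(u_1, \ldots, u_r) := \text{Pow}([r])((v_g)_{g \in G - \{\text{id}\}})$, so that $u_i = \prod_{B \ni i} v_B$, and let $\psi_i := \tilde{P}_{G_i}(u_1, \ldots, u_i)$ denote the intermediate epimorphisms produced along the way, with final value $\psi = \psi_r = P_G((v_g))$.

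The first main step is an induction showing that
\[
\psi(\sigma_p) = \bigl(\chi_{u_1}(\sigma_p), \chi_{u_2}(\sigma_p), \ldots, \chi_{u_r}(\sigma_p)\bigr)
\]
as an element of the underlying set $\mathbb{F}_2^r$ of $G$. Indeed, by construction $\psi_{i+1}(\sigma) = (a_{i+1}(\sigma), \psi_i(\sigma))$ where the $\mathbb{F}_2$-coordinate $a_{i+1}$ equals $\chi_{u_{i+1}}$ when $\theta_{i+1}$ is the trivial cocycle on $G_i$, and equals $\phi(G_i, \psi_i^{*}(\theta_{i+1})) + \chi_{u_{i+1}}$ otherwise. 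In the latter case, Proposition \ref{defining normalized phi}(b) says that $\phi(G_i, \psi_i^{*}(\theta_{i+1}))$ vanishes on every $\sigma_q$ for $q \in \mathcal{P}$ (in particular on $\sigma_p$), and in the former case $\phi(G_i, \psi_i^{*}(\theta_{i+1})) = 0$ by definition. Hence in either case $a_{i+1}(\sigma_p) = \chi_{u_{i+1}}(\sigma_p)$, which gives the displayed formula by induction on $i$.

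The second step converts these character values into $p$-divisibility statements. Since $\sigma_p$ topologically generates $I_p(2)$ and $p$ is odd, standard ramification theory for the quadratic extension $\Q(\sqrt{u})/\Q$ gives $\chi_u(\sigma_p) = 1$ if and only if $p$ ramifies in $\Q(\sqrt{u})$, if and only if $p \mid u$ for the squarefree integer $u$. Therefore $\chi_{u_i}(\sigma_p) = 1$ if and only if $p \mid u_i = \prod_{B \ni i} v_B$, if and only if there exists $B \ni i$ with $p \mid v_B$. By the primitivity hypothesis on the tuple $(v_g)$, at most one $B$ can be divisible by $p$, so such $B$ (if it exists) is unique; call it $B_0$.

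The final step unwinds the identification between $g_0 \in G - \{\text{id}\}$ and non-empty subsets of $[r]$. If $p \mid v_{g_0}$ for a unique $g_0$, then $B_0$ is the subset of $[r]$ corresponding to $g_0$, and the preceding displays yield $\psi(\sigma_p) = g_0$ as a set element. If $p$ divides no $v_g$, then every coordinate $\chi_{u_i}(\sigma_p)$ vanishes and $\psi(\sigma_p) = \text{id}$. The only subtlety worth flagging is the careful distinction between the set-theoretic coordinates of $G = \mathbb{F}_2^r$ and its twisted group structure, but this is harmless: the conclusion is purely a statement about which set element $\psi(\sigma_p)$ is, which is exactly what the coordinate computation produces.
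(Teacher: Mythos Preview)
Your proof is correct and follows essentially the same approach as the paper: both arguments push $\sigma_p$ through the inductive construction, use the normalization in Proposition~\ref{defining normalized phi}(b) to kill the $\phi$-contribution at each step so that the $\mathbb{F}_2$-coordinate is exactly $\chi_{u_i}(\sigma_p)$, and then identify this with the $p$-support of $(u_j)_{j\in[r]}$. Your write-up is a bit more explicit about the induction and the translation between subsets $B\subseteq [r]$ and elements $g_0\in G$, but there is no substantive difference in strategy.
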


\begin{proof}
Write $(w_j)_{j \in [r]}$ for the image of $(v_g)_{g \in G - \{\textup{id}\}}$ under $\text{Pow}([r])$. For the first part, consider step $i$ of the admissible sequence so that 
\[
G_i = (\FF_2 \times G_{i - 1}, \ast_{\theta_i}).
\]
Observe that $\sigma_p$ is sent to $1$ (after projecting on the $\FF_2$ component) if and only if $\chi_{w_i}$ ramifies at $p$. Indeed, this follows from the normalization imposed on the $1$-cochains $\phi$ constructed in the parametrization. Therefore $\sigma_p$ is sent to the $p$-support of $(w_j)_{j \in [r]}$, which by construction of the map $P_G$ is also the unique $v_{g_0}$ divisible by $p$. 

For the second part, observe that if $p$ does not divide any of the $v_g$, then at every step of the admissible sequence $\sigma_p$ is mapped to $0$. Hence $\sigma_p$ is sent to $0$, which is by construction the identity element of $G$.  
\end{proof}

We next read off the value of the discriminant under the bijection $P_G$. For any continuous homomorphism $\psi$ of $G_{\mathbb{Q}}$ with values in some finite group, we denote by $\text{Disc}(\psi)$ the absolute discriminant of the corresponding extension of $\mathbb{Q}$.  We write $\text{odd}(n)$ for the largest, positive, odd divisor of an integer $n$.

\begin{proposition} 
\label{reading off discriminants}
Let $(v_g)_{g \in G - \{\textup{id}\}}$ be an element of $\textup{Prim}(\mathcal{S}^{G - \{\textup{id}\}})(\textup{solv.})$. Then
$$
\textup{odd}(\textup{Disc}(P_G((v_g)_{g \in G - \{\textup{id}\}}))) = \textup{odd} \left(\prod_{g \in G - \{\textup{id}\}} |v_g|^{\#G \cdot (1-\frac{1}{\#\langle g \rangle})} \right).
$$
\end{proposition}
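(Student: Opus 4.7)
The plan is to compute $\text{odd}(\text{Disc}(\psi))$ for $\psi = P_G((v_g)_{g \in G - \{\text{id}\}})$ by a purely local analysis at each odd rational prime $p$, and then use Proposition \ref{Reading off inertia} to translate this local information back into the combinatorial data on the left-hand side.

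First, I would recall that, since the image of $\psi$ is a $2$-group and $p$ is odd, the ramification of $\psi$ at $p$ is tame. For a tamely ramified Galois extension $L/\Q$ with inertia group $I_p \subseteq \Gal(L/\Q) = G$ at a prime above $p$, the classical formula for the differents in the tame case gives a contribution to the discriminant of exactly
\[
p^{\#G \cdot \left(1 - \frac{1}{\#I_p}\right)}.
\]
This is the only fact from algebraic number theory I need: it is a straightforward conductor-discriminant / different computation once one knows that the higher ramification groups vanish in the tame case.

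Next, I would invoke Proposition \ref{cyclic inertia generators}, which says that $I_p(2)$ is pro-cyclic with topological generator $\sigma_p$, so that the inertia group of $\psi$ at $p$ is the cyclic group $\langle \psi(\sigma_p) \rangle$. Now Proposition \ref{Reading off inertia} computes this image explicitly: if $p$ divides no $v_g$ then $\psi(\sigma_p) = \text{id}$ and $p$ is unramified (contributing $1$ to the discriminant); if $p \mid v_{g_0}$ for the unique $g_0$ dictated by the coprimality hypothesis, then $\psi(\sigma_p) = g_0$ and the local contribution equals $p^{\#G \cdot (1 - 1/\#\langle g_0 \rangle)}$.

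Finally, I would assemble these local contributions into the global odd part. Because the integers $\{v_g\}_{g \in G - \{\text{id}\}}$ are pairwise coprime, the odd primes dividing the different $v_g$'s are disjoint, so
\[
\text{odd}(\text{Disc}(\psi)) = \prod_{g \in G - \{\text{id}\}} \prod_{\substack{p \mid v_g \\ p \text{ odd}}} p^{\#G \cdot (1 - 1/\#\langle g \rangle)} = \prod_{g \in G - \{\text{id}\}} \text{odd}(|v_g|)^{\#G \cdot (1 - 1/\#\langle g \rangle)},
\]
which is exactly $\text{odd}\bigl(\prod_{g} |v_g|^{\#G(1 - 1/\#\langle g \rangle)}\bigr)$ after moving the ``odd'' to the outside using pairwise coprimality once more. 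The main (very mild) obstacle is simply keeping the bookkeeping straight: making sure that the tame local formula really is $\#G(1 - 1/e)$ rather than some variant, and that the coprimality convention (which forbids two $v_g$'s from both being negative) is what lets us pull $\text{odd}(\cdot)$ outside the product cleanly. No additional input is needed beyond Propositions \ref{cyclic inertia generators} and \ref{Reading off inertia} and the tame conductor-discriminant formula.
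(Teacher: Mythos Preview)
Your proposal is correct and follows essentially the same approach as the paper: both arguments reduce to checking the $p$-adic valuation at each odd prime via the tame discriminant formula $v_p(\textup{Disc}) = \#G\cdot(1 - 1/\#\langle\psi(\sigma_p)\rangle)$ and then invoke Proposition \ref{Reading off inertia} to identify $\psi(\sigma_p)$. The paper's proof is slightly terser (it does not explicitly cite Proposition \ref{cyclic inertia generators} or spell out the final product assembly), but there is no substantive difference.
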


\begin{proof}
We need to show that for each $p \in \mathcal{P}$ the $p$-adic valuation of $\textup{Disc}(P_G((v_g)_{g \in G - \{\textup{id}\}}))$ matches with the $p$-adic valuation of the right hand side. Since $p$ is odd and since $G$ is a $2$-group, we know that $p$ is a tame prime. Therefore we conclude that
$$
v_p(\textup{Disc}(P_G((v_g)_{g \in G - \{\textup{id}\}}))) = \frac{\#G}{\# \langle P_G((v_g)_{g \in G - \{\textup{id}\}})(\sigma_p) \rangle} \cdot \big(\# \langle P_G((v_g)_{g \in G - \{\textup{id}\}})(\sigma_p) \rangle-1 \big).
$$
Thanks to Proposition \ref{Reading off inertia} we deduce that $P_G((v_g)_{g \in G - \{\textup{id}\}})(\sigma_p)$ is trivial in case $p$ does not divide any $v_g$ and equals $g_0$ in case $p$ divides $v_{g_0}$. This is precisely the desired conclusion. 
\end{proof}

\section{The upper bound}
\label{sUpper}
For any finite $2$-group $G$ we denote by $\text{Inv}(G)$ the subset of $G$ consisting of involutions, that is, elements $g \in G - \{\text{id}\}$ with $g^2=\text{id}$. In this section we use the parametrization of Section \ref{Parametrization} to establish the following upper bound. 

\begin{theorem} 
\label{Main1}
Let $G$ be a non-trivial, finite $2$-group. Then there exists a constant $c \in \mathbb{R}_{>0}$ such that
$$
\#\{\psi \in \textup{Epi}_{\textup{top.gr.}}(G_{\mathbb{Q}}, G): \textup{Disc}(\psi) \leq X\} \leq c \cdot X^{\frac{2}{\#G}} \cdot \log(X)^{\#\textup{Inv}(G) - 1}
$$
for all $X \in \mathbb{R}_{>2}$. 
\end{theorem}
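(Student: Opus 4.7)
The plan is to invoke the parametrization $P_G$ from Section~\ref{Parametrization}. First I would observe that every finite $2$-group admits an admissible sequence: the centre of a non-trivial $2$-group contains an involution, so one may recursively take a central composition series with $\FF_2$-quotients. Fixing such a sequence for $G$ and using that every epimorphism $G_{\mathbb{Q}} \twoheadrightarrow G$ factors through $\mathcal{G}_{\mathbb{Q}}^{\textup{pro-}2}$ (because $G$ is a $2$-group), Proposition~\ref{It is a parametrization} identifies $\textup{Epi}_{\textup{top.gr.}}(G_{\mathbb{Q}}, G)$ with $\textup{Prim}(\mathcal{S}^{G - \{\textup{id}\}})(\textup{solv.})$. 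Writing $e_g := \#G \cdot (1 - 1/\#\langle g\rangle)$ and combining Proposition~\ref{reading off discriminants} with the trivial inequality $\textup{Disc} \geq \textup{odd}(\textup{Disc})$ and with the fact that pairwise coprimality allows at most one coordinate of the tuple to be even (so the $2$-part of the product is bounded by a constant depending only on $G$), the hypothesis $\textup{Disc}(\psi) \leq X$ reduces to the purely combinatorial inequality $\prod_{g \in G - \{\textup{id}\}} |v_g|^{e_g} \leq C_G X$ for some constant $C_G$ depending only on $G$.

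\textbf{Counting coprime squarefree tuples.} The crucial numerical fact is that $e_g = \#G/2$ for $g \in \textup{Inv}(G)$, while $e_g \geq 3\#G/4$ for $g$ of order at least $4$; the involutions are therefore the cheap directions. My plan is to split the tuple into an involution block $(v_g)_{g \in \textup{Inv}(G)}$ and a non-involution block $(v_h)_h$, fix the latter, and estimate the former. With $M := \prod_h |v_h|^{e_h}$ and $Y := (C_G X/M)^{2/\#G}$, I would drop the cross-block coprimality (losing only a multiplicative constant) and bound the number of pairwise coprime squarefree involution tuples with $\prod_{g} |v_g| \leq Y$ by a standard $\zeta^k$-type estimate: any such positive tuple is encoded by a squarefree $N \leq Y$ together with an assignment of each prime of $N$ to one of the $\#\textup{Inv}(G)$ slots, yielding the bound $\ll Y (\log(Y+2))^{\#\textup{Inv}(G) - 1}$. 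Summing over the non-involution block then factorises as $X^{2/\#G} (\log X)^{\#\textup{Inv}(G) - 1}$ times a convergent outer series $\sum_{(v_h)} \prod_h |v_h|^{-2 e_h/\#G}$, which converges absolutely because $2e_h/\#G \geq 3/2 > 1$ for every non-involution index $h$.

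\textbf{Main obstacle.} Once the parametrization is in hand, the counting reduces to a very classical multiplicative-function estimate, so the heart of the argument is really Section~\ref{Parametrization} rather than this section. The technical points I expect to need to check carefully are: (i) the boundary regime $Y \leq 2$, where the main-term bound is replaced by $O(1)$ and absorbed into the constant $c$; (ii) the sign conventions built into $\textup{Prim}(\mathcal{S}^{G - \{\textup{id}\}})$, which force at most one coordinate of the full tuple to be negative and therefore contribute only a constant factor; and (iii) the discarded cross-block coprimality, whose omission can only enlarge the count. None of these is deep, so the hardest aspect of the proof is really the correct bookkeeping in splitting the sum and invoking the parametrization with the right constants.
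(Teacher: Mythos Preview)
Your proposal is correct and follows essentially the same route as the paper: reduce via $P_G$ and Proposition~\ref{reading off discriminants} to counting coprime squarefree tuples weighted by $|v_g|^{e_g}$, split off the involution block, bound that block by the standard estimate $\sum_{d\leq Y}\mu^2(d)\,\#\textup{Inv}(G)^{\omega(d)}\ll Y(\log Y)^{\#\textup{Inv}(G)-1}$, and then sum the non-involution block using $2e_h/\#G>1$. The only cosmetic difference is your handling of the prime $2$ (you bound the $2$-part of $\prod|v_g|^{e_g}$ by a constant, whereas the paper instead observes that $v_2(\textup{Disc})\geq e_{g_0}$ when $2\mid v_{g_0}$); both arguments give the same reduction.
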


\begin{proof}
Write
\[
e_g := \#G\left(1-\frac{1}{\#\langle g \rangle} \right).
\]
Thanks to Proposition \ref{reading off discriminants}, it suffices to bound
\[
\#\left\{(v_g)_{g \in G - \{\text{id}\}} \in \text{Prim}(\mathcal{S}^{G - \{\text{id}\}}): \prod_{g \in G - \{\text{id}\}} |v_g|^{e_g} \leq X\right\}.
\]
Indeed, the $2$-adic valuation of the discriminant is at least $e_g$ in case $2 \mid v_g$. The above set has size at most
\[
\ll_G \sum_{\prod_{g \in G - \{\text{id}\}} v_g^{e_g} \leq X} 1,
\]
where the $v_g$ are positive squarefree integers that are pairwise coprime. We pull out the variables $v_g$ for which the order of $g$ is greater than $2$. Then the above sum becomes
\begin{align}
\label{eSquarefreeCount}
\sum_{\substack{\prod_{g \in G - \text{Inv}(G) - \{\text{id}\}} v_g^{e_g} \leq X \\ v_g \text{ positive squarefree} \\ v_g \text{ pairwise coprime}}} \sum_{\substack{\prod_{h \in \text{Inv}(G)} v_h \leq \frac{X^{\frac{2}{\# G}}}{\prod_{g \in G - \text{Inv}(G) - \{\text{id}\}} v_g^{2e_g/\# G}} \\ v_h \text{ positive squarefree} \\ v_h \text{ coprime to } v_g \\ v_h \text{ pairwise coprime}}} 1.
\end{align}
In the inner sum we drop the condition that $v_h$ is coprime to the $v_g$ with $g \in G - \text{Inv}(G) - \{\text{id}\}$, while in the outer sum we drop the condition that the $v_g$ are pairwise coprime. However, we keep the condition that the $v_h$ are pairwise coprime. Then the inner sum is bounded by
\[
\sum_{1 \leq d \leq \frac{X^{\frac{2}{\# G}}}{\prod_{g \in G - \text{Inv}(G) - \{\text{id}\}} v_g^{2e_g/\# G}}} \mu^2(d) \cdot \# \text{Inv}(G)^{\omega(d)} \ll \frac{X^{\frac{2}{\#G}} \cdot \log(X)^{\#\text{Inv}(G)-1}}{\prod_{g \in G - \text{Inv}(G) - \{\text{id}\}} v_g^{2e_g/\# G}},
\]
which follows for example from \cite[Theorem 2.7]{MoVa} and \cite[Corollary 2.15]{MoVa}. Plugging this bound in equation (\ref{eSquarefreeCount}) gives the sum
\begin{multline}
\label{eUpperConverge}
X^{\frac{2}{\#G}} \cdot \log(X)^{\#\text{Inv}(G) - 1} \sum_{\prod_{g \in G - \text{Inv}(G) - \{\text{id}\}} v_g^{e_g} \leq X} \frac{1}{\prod_{g \in G - \text{Inv}(G) - \{\text{id}\}} v_g^{2e_g/\# G}} \ll \\
X^{\frac{2}{\#G}} \cdot \log(X)^{\#\text{Inv}(G) - 1} \prod_{g \in G - \text{Inv}(G) - \{\text{id}\}} \left(\sum_{v_g^{e_g} \leq X} \frac{1}{v_g^{2e_g/\# G}} \right).
\end{multline}
Each inner sum converges since $2e_g/\#G > 1$ for $g \in G - \text{Inv}(G) - \{\text{id}\}$, which completes the proof of the theorem.
\end{proof}

Let $K$ be a number field. The upper bound in Theorem \ref{mt3: Upper bound general}, which is the natural generalization of Theorem \ref{Main1} to number fields and arbitrary nilpotent groups, is always at least as good as the upper bound appearing in \cite{KlunersUpper}, where an upper bound of the shape
\[
c_{G, K} \cdot X^{a(G)} \cdot \log(X)^{d(G, K) - 1}
\]
is established. For $2$-groups we can compute $d(G, K)$ as follows. Take a refinement $R$ of the upper central series of $G$
\[
\{\text{id}\} = G_r \subseteq G_{r - 1} \subseteq \dots \subseteq G_1 \subseteq G_0 = G
\]
such that each quotient is of size $2$ (we caution the reader that these $G_i$ are closely related, but different than the ones in the definition of admissible sequence). Then define $A_i := G_{i - 1} - G_i$ for $1 \leq i \leq r$ and
\[
d(R, K) = \sum_{\substack{1 \leq i \leq r \\ A_i \cap \text{Inv}(G) \neq \emptyset}} |A_i|.
\]
Now $d(G, K)$ is simply the minimum of $d(R, K)$ over all refinements $R$. We conclude this section by showing that for some groups $G$, Theorem \ref{Main1} provides a strictly better upper bound. In particular the next example is a group of size $64$, where \cite{KlunersUpper} always gives (i.e. for any choice of the filtration $\{G_i\}$ as above) at least $4$ logarithms more than Theorem \ref{Main1}. 

\begin{proposition} 
\label{smaller than kluners constant}
Let 
$$
G := \frac{\mathbb{F}_2[x_1,x_2]}{(x_1^2,x_2^2)} \rtimes \mathbb{F}_2^2,
$$
where $(1, 0)$ acts as multiplication by $1+x_1$ and $(0, 1)$ as multiplication by $1+x_2$. Then for any choice of the filtration $\{G_i\}$ as in \cite{KlunersUpper}, one has
$$
d(G, K) \geq \# \textup{Inv}(G) + 4.
$$
\end{proposition}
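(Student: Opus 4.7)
The plan is to bound $d(R, K)$ below by $|A_1|$, the size of the top block of any such refinement, by showing that no matter how $R$ is chosen, $A_1$ must contain an involution. Since $|A_1| = |G| - |G|/2 = 32 > 31 = \#\textup{Inv}(G) + 4$, this yields the inequality after taking the minimum over $R$. The crux is a statement at the level of the abelianization: no index-$2$ subgroup of $G$ can contain every involution.

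First I will compute $G^{\textup{ab}}$. Identifying $V := \mathbb{F}_2[x_1,x_2]/(x_1^2,x_2^2)$ with the group algebra $\mathbb{F}_2[\mathbb{F}_2^2]$ (the $x_i$ playing the role of augmentation generators), the $\mathbb{F}_2^2$-action is by left multiplication. Since $V$ and $\mathbb{F}_2^2$ are separately abelian, all commutators come from the cross term, and a direct computation $[(v,0),(0,g)] = ((1+g)v,0)$ identifies $[G,G] = I \times \{0\}$ with $I := (x_1, x_2)$. Hence $G^{\textup{ab}} \cong (V/I) \oplus \mathbb{F}_2^2 \cong \mathbb{F}_2^3$, and the projection is $(v,g) \mapsto (v \bmod I, g)$.

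Next I enumerate the involutions and track their images. Since $(v,g)^2 = ((1+g)v, 0)$, the element $(v,g)$ is an involution precisely when $(1+g)v = 0$ in $V$. A case split on $g$ gives $15$ involutions with $g = 0$ (every nonzero $v$) and, for each of the three non-trivial $g \in \mathbb{F}_2^2$, exactly $4$ involutions (the kernel of multiplication by $1+g$ acting on $V$), for a total $\#\textup{Inv}(G) = 27$. Now the involution $(1, 0) \in G$ has image $(\bar 1, 0)$ in $G^{\textup{ab}}$, and for each non-trivial $g \in \mathbb{F}_2^2$ the involution $(0, g) \in G$ has image $(0, g)$. These four images span $G^{\textup{ab}}$.

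To finish, consider any refinement $R$ with consecutive indices $2$; the subgroup $G_1$ has index $2$ in $G$, so $G_1 = \ker \chi$ for a surjective character $\chi: G \twoheadrightarrow \mathbb{F}_2$. If every involution lay in $G_1$, then $\chi$ would vanish on the spanning set above, forcing $\chi = 0$, a contradiction. Hence $A_1 \cap \textup{Inv}(G) \neq \emptyset$, and therefore $d(R, K) \geq |A_1| = 32 \geq \#\textup{Inv}(G) + 4$ for every $R$; taking the minimum proves the claim. I do not foresee a serious obstacle; the one delicate piece is the bookkeeping of involutions and the verification of the spanning property, both of which reduce to a finite calculation inside $\mathbb{F}_2[x_1,x_2]/(x_1^2,x_2^2)$.
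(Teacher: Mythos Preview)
Your argument is correct and in fact slightly sharper than what the paper proves: you obtain $d(R,K) \geq |A_1| = 32$ for every refinement, whereas the paper only extracts $d(R,K) \geq 27 + 4 = 31$. The computations you sketch all check out: $[G,G] = (x_1,x_2) \times \{0\}$, $G^{\mathrm{ab}} \cong \mathbb{F}_2^3$, $\#\mathrm{Inv}(G) = 15 + 4 + 4 + 4 = 27$, and the involutions $(1,(0,0))$, $(0,(1,0))$, $(0,(0,1))$ project to a basis of $G^{\mathrm{ab}}$, so no index-$2$ subgroup can swallow them all.

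The paper takes a different route. Rather than bounding $d(R,K)$ from below by a single block $|A_1|$, it argues that $d(R,K)$ always counts every involution (trivially) \emph{plus} at least four non-involutions. Concretely: from $G_2 \supseteq [G,G]$ it deduces that one of the involutions $(0,(1,0))$, $(0,(0,1))$, $(0,(1,1))$ has weight $\leq 1$; then, since multiplying by a commutator does not change the weight, it exhibits four explicit order-$4$ elements sharing the same weight as that involution. This yields $d(R,K) \geq \#\mathrm{Inv}(G) + 4$ directly. Your approach is shorter and avoids the casework and the order-$4$ verifications; the paper's approach is more granular in that it actually locates the extra elements rather than relying on the numerical coincidence $|A_1| = 32 > 31$, and so its style of argument would transfer more readily to groups where the single top block is not by itself large enough.
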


\begin{proof}
Let
$$
\{\text{id}\} = G_6 \subseteq G_5 \subseteq \dots \subseteq G_1 \subseteq G_0 = G
$$
be a refinement of the upper central series where each group is of index $2$ in the next one. We say that an element $g \in G$ has \emph{weight} $i$ in case $g \in G_i - G_{i + 1}$. We claim that there are at least $4$ elements of order $4$ with the same weight as an involution. 

We start by computing
\[
[G, G] = (x_1, x_2) \rtimes \{0\}, \quad \frac{G}{[G, G]} \cong \FF_2^3,
\]
where $(x_1, x_2)$ is the ideal generated by $x_1$ and $x_2$. From this we deduce that any surjective homomorphism
$$
\pi: G \twoheadrightarrow \mathbb{F}_2^2
$$ 
does not vanish identically on $\{0\} \rtimes \mathbb{F}_2^2$. Next observe that each of the $G_i$'s above are normal in $G$. Therefore, since $[G : G_2] = 4$, we conclude that $\frac{G}{G_2}$ is an abelian group, which implies that 
\begin{align}
\label{eG2GG}
G_2 \supseteq [G, G] =  (x_1, x_2) \rtimes \{0\}, \quad \frac{G}{G_2} \cong \FF_2^2.
\end{align}
Looking at the surjective homomorphism $G \twoheadrightarrow \frac{G}{G_2} \cong \FF_2^2$, we conclude that at least one of the involutions in the set
$$
\{(0,(1,0)),(0,(1,1)),(0,(0,1))\}
$$ 
has weight at most $1$. On the other hand equation (\ref{eG2GG}) shows that every commutator is necessarily of weight at least $2$. Hence at least one of the following three sets
$$
\{(x_2,(1,0)), (x_1+x_2,(1,0)), (x_2+x_1x_2,(1,0)), (x_1+x_2+x_1x_2,(1,0))\},
$$
$$
\{(x_2,(1,1)), (x_1,(1,1)), (x_1+x_1x_2,(1,1)), (x_2+x_1x_2,(1,1))\},
$$
$$
\{(x_1,(0,1)), (x_1+x_2,(0,1)), (x_1+x_2+x_1x_2,(0,1)), (x_1+x_1x_2,(0,1))\},
$$
consist entirely of elements with order $4$ and weight $1$. This shows that the set of elements having the same weight as an involution contains at least $4$ elements that are not involutions, which is precisely the desired conclusion. 
\end{proof}

\section{\texorpdfstring{Asymptotics for $G$ with $\text{Inv}(G) \subseteq Z(G)$}{Asymptotic for G with Inv(G) inside Z(G)}}
\label{sMainQ}
For a group $G$ we denote by $Z(G)$ the center of $G$. The goal of this section is to prove Theorem \ref{Main2}. 

Let $G$ be a finite $2$-group with $\text{Inv}(G) \subseteq Z(G)$. Then in particular $H(G):=\text{Inv}(G) \cup \{\text{id}\}$ is a vector space over $\mathbb{F}_2$, we denote by $h(G)$ its dimension. We filter $G$ by an admissible sequence 
$$
\{(G_i, \theta_i)\}_{i \in [r]}
$$
such that the kernel of the projection from $G = G_r$ to $G_{r - h(G)}$ coincides with $H(G)$. In other words $H(G)$ equals the subset of $\mathbb{F}_2^r$ of vectors with last $r-h(G)$ coordinates equal to $0$. 

Let us denote by
$$
\text{Prim}(\mathcal{S}^{G-H(G)})(\text{solv.})
$$
the subset of $\text{Prim}(\mathcal{S}^{G-H(G)})$ consisting of elements that appear as the last $r-h(G)$ coordinate of a vector in $\text{Prim}(\mathcal{S}^{G - \{\text{id}\}})(\text{solv.})$. The following proposition shows that once the set $\text{Prim}(\mathcal{S}^{G-H(G)})(\text{solv.})$ is given as input, then the set $\text{Prim}(\mathcal{S}^{G - \{\text{id}\}})(\text{solv.})$ admits the following relatively straightforward structure. In what follows we define the set $\textup{Prim}(\mathcal{S}^{G - \{\textup{id}\}})^{\circ}$ to be the set of vectors $(v_g)_{g \in G-\{\text{id}\}} \in \textup{Prim}(\mathcal{S}^{G - \{\textup{id}\}})$ such that $v_h \neq 1$ for each $h \in H(G)$. 

\begin{proposition} 
\label{Product structure}
We have
$$
\textup{Prim}(\mathcal{S}^{G - \{\textup{id}\}})(\textup{solv.}) \supseteq (\textup{Prim}(\mathcal{S}^{H(G) - \{\textup{id}\}}) \times \textup{Prim}(\mathcal{S}^{G - H(G)})(\textup{solv.})) \cap \textup{Prim}(\mathcal{S}^{G - \{\textup{id}\}})^{\circ}
$$
and furthermore
$$
\textup{Prim}(\mathcal{S}^{G - \{\textup{id}\}})(\textup{solv.}) \subseteq (\textup{Prim}(\mathcal{S}^{H(G) - \{\textup{id}\}}) \times \textup{Prim}(\mathcal{S}^{G - H(G)})(\textup{solv.})) \cap \textup{Prim}(\mathcal{S}^{G - \{\textup{id}\}}).
$$
\end{proposition}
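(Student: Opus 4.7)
The inclusion $\subseteq$ is immediate from the definitions: if $V \in \textup{Prim}(\mathcal{S}^{G - \{\textup{id}\}})(\textup{solv.})$ then $V \in \textup{Prim}(\mathcal{S}^{G - \{\textup{id}\}})$ and the sub-tuple $(v_h)_{h \in H(G) - \{\textup{id}\}}$ inherits pairwise coprimality, while $(v_g)_{g \in G - H(G)}$ lies in $\textup{Prim}(\mathcal{S}^{G - H(G)})(\textup{solv.})$ by construction of the latter.

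For $\supseteq$, suppose $V = ((v_h), (v_g))$ lies in the right-hand side intersection. By definition of $\textup{Prim}(\mathcal{S}^{G - H(G)})(\textup{solv.})$ there exists a witness $V' = ((v'_h), (v_g)) \in \textup{Prim}(\mathcal{S}^{G - \{\textup{id}\}})(\textup{solv.})$ sharing the $(v_g)$-part; set $\psi'_r := P_G(V')$, $W := \textup{Pow}([r])(V) = (w_1, \ldots, w_r)$, $W' := \textup{Pow}([r])(V')$, and $r_1 := r - h(G)$. The admissible sequence is arranged so that its first $r_1$ steps build $G_{r_1} = G/H(G)$ and its last $h(G)$ steps extend by $H(G)$; correspondingly, under the subset interpretation of $\textup{Pow}([r])$, $H(G) - \{\textup{id}\}$ is identified with non-empty subsets $B \subseteq \{r_1+1, \ldots, r\}$. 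Hence for each $i \leq r_1$ the factor $w_i = \prod_{B \ni i} v_B$ only involves $v_g$'s (any $B$ containing $i \leq r_1$ must correspond to a non-$H(G)$ element), so $w_i = w'_i$. Thus the first $r_1$ steps of $\tilde P_G$ on $W$ coincide with those on $W'$, producing a common epimorphism $\psi_{r_1} = \psi'_{r_1} \colon \mathcal{G}_{\mathbb{Q}}^{\text{pro}-2} \twoheadrightarrow G_{r_1}$ without any $\bullet$-hit.

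For $r_1 \leq j \leq r$, set $K_j := \ker(G_j \twoheadrightarrow G_{r_1})$; since $H(G) = K_r \subseteq Z(G)$ by hypothesis, each $K_j$ is central in $G_j$, and as a quotient of the elementary abelian $H(G) \cong \mathbb{F}_2^{h(G)}$, $K_j$ is itself elementary abelian, so every central extension $K_{j+1} \twoheadrightarrow K_j$ by $\mathbb{F}_2$ splits. We verify inductively that $\tilde P_G$ never hits $\bullet$ at step $j+1$ with $j \geq r_1$. The parametrization's $\psi_j$ (valid by induction) and $\psi'_j$ both lift $\psi_{r_1}$, so they differ by a central homomorphism $\eta_j \colon \mathcal{G}_{\mathbb{Q}}^{\text{pro}-2} \to K_j$. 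Lifting $\eta_j$ along the splitting $K_{j+1} \twoheadrightarrow K_j$ yields $\tilde \eta_j \colon \mathcal{G}_{\mathbb{Q}}^{\text{pro}-2} \to K_{j+1}$; then $\psi'_{j+1} \cdot \tilde \eta_j$ provides a lift of $\psi_j$ to $G_{j+1}$ (using centrality of $K_{j+1}$), so the $H^2$-obstruction $[\psi_j^*(\theta_{j+1})]$ vanishes in $H^2(\mathcal{G}_{\mathbb{Q}}^{\text{pro}-2}, \mathbb{F}_2)$. For the linear independence condition (applicable when $\theta_{j+1} = 0$), any hypothetical dependence $\chi_{w_{j+1}} = \sum_{k \in J} \chi_{w_k}$ for $J \subseteq \{k \leq j : \theta_k = 0\}$ would force the squarefree product $w_{j+1} \cdot \prod_{k \in J} w_k$ to be a square in $\mathbb{Q}^\ast$; by pairwise coprimality, every $v_B$ must then occur with even multiplicity in this product, yet the singleton $B = \{j+1\}$ contributes multiplicity exactly $1$, so $v_{\{j+1\}} = 1$. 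But $\{j+1\} \subseteq \{r_1+1, \ldots, r\}$ so $v_{\{j+1\}} = v_h$ for some $h \in H(G) - \{\textup{id}\}$, contradicting the $\textup{Prim}(\mathcal{S}^{G - \{\textup{id}\}})^{\circ}$ hypothesis.

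Hence $\tilde P_G$ never hits $\bullet$ on $W$, giving $P_G(V) \in \textup{Epi}_{\textup{top.gr.}}(\mathcal{G}_{\mathbb{Q}}^{\text{pro}-2}, G)$ and thus $V \in \textup{Prim}(\mathcal{S}^{G - \{\textup{id}\}})(\textup{solv.})$. The main obstacle is the $H^2$-obstruction verification at the $H(G)$-steps: it relies essentially on the hypothesis $\textup{Inv}(G) \subseteq Z(G)$, which makes the $K_j$'s central and elementary abelian so that the splittings exist; the $\textup{Prim}^\circ$ hypothesis enters only to preclude the degenerate linear dependence.
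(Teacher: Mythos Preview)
Your proof is correct but takes a genuinely different route from the paper's. The paper argues globally: given the witness epimorphism $P_G((y_g))$, it first multiplies by the inverse of the $H(G)$-valued character attached to $(y_h)_{h \in H(G)-\{\textup{id}\}}$ to obtain a homomorphism $\psi$ whose projection to $G/H(G)$ is still surjective; then it multiplies $\psi$ by the new character $\chi'$ attached to $(y'_h)$, checks surjectivity of $\chi' \cdot \psi$ via a Frattini-quotient argument (using that $\chi'^{*}$ is injective because each $y'_h \neq 1$, and that $V \cap W = \{0\}$), and finally identifies $\chi' \cdot \psi$ with $P_G((y'_g))$. The centrality hypothesis $\textup{Inv}(G) \subseteq Z(G)$ enters only once, to guarantee that pointwise multiplication by an $H(G)$-valued character is again a homomorphism.

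Your approach instead traces the inductive construction of $\tilde P_G$ step by step. The combinatorial observation that $w_i = w'_i$ for $i \leq r_1$ (because any subset $B \ni i$ with $i \leq r_1$ cannot lie inside $\{r_1+1,\ldots,r\}$) is a clean way to reduce to the last $h(G)$ steps. For those steps, you handle the $H^2$-obstruction by lifting the central ``difference'' $\eta_j$ through the split elementary-abelian tower $K_{j+1} \twoheadrightarrow K_j$ and twisting the witness $\psi'_{j+1}$; this is a perfectly valid alternative to the paper's single global twist. Your linear-independence argument via the singleton $B = \{j+1\}$ is correct and more explicit than the paper's $V \cap W = \{0\}$ condition. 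One advantage of the paper's approach is that it produces the resulting epimorphism in closed form as $\chi' \cdot \psi$, which makes the later discriminant computations slightly more transparent; one advantage of yours is that it never leaves the parametrization machinery, so the equality $P_G(V) \neq \bullet$ is established directly rather than via an identification at the end.
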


\begin{remark}
In case $G = H(G)$, the set $\textup{Prim}(\mathcal{S}^{G - H(G)})(\textup{solv.})$ is by definition the one element set containing the empty tuple, so that
\[
\textup{Prim}(\mathcal{S}^{H(G) - \{\textup{id}\}}) \times \textup{Prim}(\mathcal{S}^{G - H(G)})(\textup{solv.}) = \textup{Prim}(\mathcal{S}^{H(G) - \{\textup{id}\}}).
\]
\end{remark}

\begin{proof}
Let $\mathcal{G}$ be any profinite group and let $\psi \in \text{Hom}_{\text{top.gr.}}(\mathcal{G},G)$ such that $\pi_{\text{mod} \ H(g)} \circ \psi$ is in $\text{Epi}_{\text{top.gr.}}(\mathcal{G}, \frac{G}{H(G)})$. Let $\chi \in \text{Hom}_{\text{top.gr.}}(\mathcal{G}, H(G))$. Note that the assignment
$$
\chi \cdot \psi: \mathcal{G} \to G, \quad g \mapsto \chi(g)\cdot\psi(g)
$$
is an element of $\text{Hom}_{\text{top.gr.}}(\mathcal{G}, G)$. Indeed, this map is clearly continuous and furthermore
\begin{align*}
\chi(g_1g_2) \psi(g_1g_2) &= \chi(g_1)\chi(g_2)\psi(g_1)\psi(g_2) =\chi(g_1)\psi(g_1)\chi(g_2)\psi(g_2) \\
&= (\chi \cdot \psi)(g_1)(\chi \cdot \psi)(g_2).
\end{align*}
Here the second equality uses that $H(G) \subseteq Z(G)$. 

Next observe that $\psi$ and $\chi$ induce natural maps $\psi^\ast: \text{Hom}(G, \FF_2) \rightarrow \text{Hom}_{\text{top.gr.}}(\mathcal{G}, \mathbb{F}_2)$ and $\chi^\ast: \text{Hom}(H(G), \mathbb{F}_2) \rightarrow \text{Hom}_{\text{top.gr.}}(\mathcal{G}, \mathbb{F}_2)$. We define $V$ to be the image of $\psi^\ast$ and $W$ to be the image of $\chi^\ast$, so that $V$ and $W$ are naturally $\FF_2$ vector spaces. We claim that if 
$$
V \cap W = \{0\}
$$
and if $\chi^\ast$ is injective, then
$$
\chi \cdot \psi \in \text{Epi}_{\text{top.gr.}}(\mathcal{G}, G).
$$
Since $G$ is a finite $2$-group, it is enough to show that $\chi \cdot \psi$ surjects modulo the Frattini subgroup or equivalently
\begin{align}
\label{eSurjTest}
\chi' \circ (\chi \cdot \psi) \neq 0
\end{align}
for any non-trivial character $\chi':G \to \mathbb{F}_2$. We aim to establish equation (\ref{eSurjTest}). Let us distinguish two cases. First assume that 
\[
\chi'(H(G)) = \{0\}.
\]
Then $\chi' \circ (\chi \cdot \psi)= \chi' \circ \psi$ and the claim follows from the assumption that $\pi_{\text{mod} \ H(G)} \circ \psi$ is surjective. Suppose now that $\chi'(H(G)) \neq \{0\}$. Since $\chi^*$ is injective it follows that $\chi' \circ \chi \neq 0$. On the other hand we also know that $V \cap W=\{0\}$. This means that there exists $g \in \mathcal{G}$ with $(\chi' \circ \psi)(g) = 0$ and $(\chi' \circ \chi)(g) = 1$. Therefore we find that
$$
\chi' \circ (\chi \cdot \psi)(g) = 1
$$
and we have established equation (\ref{eSurjTest}).

We apply the above to $\mathcal{G} = \mathcal{G}_{\mathbb{Q}}^{\text{pro}-2}$. Fix $(y_g)_{g \in G-\{\text{id}\}} \in \text{Prim}(\mathcal{S}^{G-\{\text{id}\}})(\text{solv.})$. Take now any vector
$$
(y'_g)_{g \in G-\{\text{id}\}} \in \textup{Prim}(\mathcal{S}^{G - \{\textup{id}\}})
$$
with $y'_g = y_g$ for each $g \in G-H(G)$ and $y'_h \neq 1$ for each $h \in H(G)$. We must show that
$$
(y'_g)_{g \in G-\{\text{id}\}} \in \text{Prim}(\mathcal{S}^{G-\{\text{id}\}})(\text{solv.}).
$$

Through the map $\text{Pow}([h(G)])$, we see that $(y_h)_{h \in H(G)-\{\text{id}\}}$ corresponds uniquely to a character $\chi \in \text{Hom}_{\text{top.gr.}}(\mathcal{G},H(G))$. We have that the map
$$
\psi := \chi \cdot P_G((y_g)_{g \in G-\{\text{id}\}})
$$
is an element of $\text{Hom}_{\text{top.gr.}}(\mathcal{G},G)$ such that $\pi_{\text{mod} \ H(G)} \circ \psi$ is surjective. 

Let $\chi'$ be the character from $\mathcal{G}$ to $H(G)$ corresponding to $(y'_h)_{h \in H(G)-\{\text{id}\}}$. Since $y'_h \neq 1$ for each $h \in H(G)-\{\text{id}\}$, it follows that $\chi'$ is surjective and hence $\chi'^\ast$ is injective. Furthermore, $V \cap W = \{0\}$ by construction of $\psi$ and our assumption that $y'_h \neq 1$ for $h \in H(G)-\{\text{id}\}$. Hence
$$
\chi' \cdot \psi \in \text{Epi}_{\text{top.gr.}}(\mathcal{G}, G),
$$
and furthermore
$$
P_G((y'_g)_{g \in G-\{\text{id}\}}) = \chi' \cdot \psi.
$$
This establishes the first part of the proposition. The second part is straightforward.
\end{proof}

We are now ready to show Theorem \ref{tMalle} in the special case of $2$-groups over $\Q$.

\begin{theorem} 
\label{Main2}
Let $G$ be a non-trivial $2$-group with $\textup{Inv}(G) \subseteq Z(G)$. Then there exists a constant $\alpha \in \mathbb{R}_{>0}$ such that 
$$
\#\{\psi \in \textup{Epi}_{\textup{top.gr.}}(G_{\mathbb{Q}}, G): \textup{Disc}(\psi) \leq X\} \sim \alpha \cdot X^{\frac{2}{\#G}} \cdot \log(X)^{\#\textup{Inv}(G) - 1}.
$$
\end{theorem}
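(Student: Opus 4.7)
The upper bound of the asymptotic is already provided by Theorem \ref{Main1}, so the plan is to produce a matching asymptotic lower bound with the correct leading constant. The strategy is to combine the parametrization $P_G$ of Section \ref{Parametrization} with the product structure of Proposition \ref{Product structure} in order to decompose the count into an outer sum over non-involution coordinates and an inner sum over involution coordinates.

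First, I would apply Proposition \ref{It is a parametrization} together with Proposition \ref{reading off discriminants} to translate the count of epimorphisms into a count of tuples $(v_g)_{g \in G - \{\text{id}\}} \in \text{Prim}(\mathcal{S}^{G - \{\text{id}\}})(\text{solv.})$ satisfying $\prod_g |v_g|^{e_g} \leq X$, where $e_g = \#G\,(1 - 1/\#\langle g \rangle)$. Since the $2$-adic part of the discriminant is determined by only finitely many pieces of data (the possible placements of the prime $2$ among the $v_g$ together with the sign information), I would stratify by this finite signature and analyze each stratum separately. By Proposition \ref{Product structure}, a tuple in $\text{Prim}(\mathcal{S}^{G - \{\text{id}\}})(\text{solv.})$ decomposes as an outer tuple $(v_g)_{g \in G - H(G)} \in \text{Prim}(\mathcal{S}^{G - H(G)})(\text{solv.})$ together with an inner tuple $(v_h)_{h \in H(G) - \{\text{id}\}}$ of squarefree pairwise coprime integers, each $\neq 1$ and coprime to the outer tuple, with the two tuples otherwise varying independently up to boundary corrections of smaller order.

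For each fixed outer tuple, let $M := \prod_g |v_g|$ and
\[
Y := \left(\frac{X}{\prod_g |v_g|^{e_g}}\right)^{2/\#G};
\]
the inner count is then a weighted count of squarefree $n \leq Y$ coprime to $2M$ with weight $k^{\omega(n)}$, where $k = \#\text{Inv}(G) = 2^{h(G)} - 1$ counts the ways to distribute the prime factors of $n$ among the involution coordinates (a bounded additional factor accounts for the at-most-one allowed negative sign coming from the coprimality convention on $\mathcal{S}$). A uniform Selberg--Delange style asymptotic, which I expect to be developed in Section \ref{sAna}, should then yield
\[
I(Y, M) \sim c_k \cdot f(M) \cdot Y \, (\log Y)^{\#\text{Inv}(G) - 1}
\]
for some positive constant $c_k$ and some multiplicative function $f$ with $f(M) = \prod_{p \mid M}(1 + O(1/p))$, uniformly in $M$ in the relevant range.

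Finally, I would sum this asymptotic over the outer tuple. Every $g \in G - H(G)$ has order at least $4$, so $2e_g/\#G \geq 3/2$, and hence the outer Dirichlet series
\[
\sum_{(v_g) \in \text{Prim}(\mathcal{S}^{G - H(G)})(\text{solv.})} \frac{f(M)}{\prod_g |v_g|^{2 e_g/\#G}}
\]
converges absolutely, exactly as in the estimate (\ref{eUpperConverge}) from the proof of Theorem \ref{Main1}. Using $\log Y = (2/\#G) \log X + O(\log M)$ isolates a main term of the shape $\alpha X^{2/\#G} (\log X)^{\#\text{Inv}(G) - 1}$ with $\alpha > 0$ (positivity follows from the existence of at least one $G$-extension of $\Q$, guaranteed by Shafarevich for nilpotent groups). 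The range where $\prod_g |v_g|^{e_g}$ is close to $X$ contributes $o(X^{2/\#G} (\log X)^{\#\text{Inv}(G) - 1})$, since there $Y$ is at most a small power of $\log X$ and the outer tail sum is itself arbitrarily small. The main technical obstacle I foresee is establishing the inner asymptotic for $I(Y, M)$ with uniformity in $M$ strong enough to permit termwise summation against the outer variables; everything else amounts to bookkeeping with convergent series and lower-order boundary corrections.
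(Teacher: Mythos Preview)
Your proposal is correct and follows essentially the same decomposition as the paper: fix the non-involution coordinates $(v_g)_{g\in G-H(G)}$, count the involution coordinates via a $k^{\omega(n)}$-weighted squarefree count, and sum using the convergence established in equation~(\ref{eUpperConverge}). The stratification by $2$-adic data, the appeal to Proposition~\ref{Product structure}, and the final positivity via Shafarevich all match.

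There is one genuine methodological difference worth noting. You anticipate as the main technical obstacle the need for the inner asymptotic $I(Y,M)\sim c_k f(M) Y(\log Y)^{k-1}$ to hold \emph{uniformly} in $M$, so that you can sum termwise. The paper sidesteps this entirely by invoking Tannery's theorem (dominated convergence on $\ell^1$): for each fixed outer tuple $x_i$ one proves only the \emph{pointwise} asymptotic $f_i(X)\to b_{G,x_i}$ after normalizing by $X^{2/\#G}(\log X)^{k-1}$, with no uniformity required; the summability condition $\sum_i M_i<\infty$ is then supplied directly by the upper bound already established in Theorem~\ref{Main1} via equation~(\ref{eUpperConverge}). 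This is cleaner than tracking uniform dependence of the Selberg--Delange constant and error term on the modulus $M$, and it explains why the paper's proof of Theorem~\ref{Main2} is so short once Theorem~\ref{Main1} is in hand. Your route would also work, but it front-loads analytic effort that Tannery's theorem makes unnecessary.
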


\begin{proof}
Thanks to Proposition \ref{It is a parametrization} we have a bijection between the sets $\text{Epi}_{\text{top.gr.}}(G_{\mathbb{Q}}, G)$ and $\text{Prim}(\mathcal{S}^{G - \{\textup{id}\}})(\text{solv.})$. This allows us to define the discriminant $\text{Disc}(y)$ for any vector $y$ in the space $\text{Prim}(\mathcal{S}^{G - \{\textup{id}\}})(\text{solv.})$. We also recall that the odd part of $\text{Disc}(y)$ can be computed by an appeal to Proposition \ref{reading off discriminants}.

Now label the elements of $\text{Prim}(\mathcal{S}^{G - H(G)})(\text{solv.})$ as $x_1, x_2, x_3, \dots$ and write $L$ for the length of the sequence, where we allow $L$ to be infinite. Write $\pi$ for the natural projection map from $\text{Prim}(\mathcal{S}^{G - \{\textup{id}\}})(\text{solv.})$ to $\text{Prim}(\mathcal{S}^{G - H(G)})(\text{solv.})$. Then we have
\[
\#\{\psi \in \text{Epi}_{\text{top.gr.}}(G_{\mathbb{Q}}, G): \text{Disc}(\psi) \leq X\} = \sum_{i = 1}^L \sum_{\substack{y \in \text{Prim}(\mathcal{S}^{G - \{\textup{id}\}})(\text{solv.}) \\ \pi(y) = x_i \\ \text{Disc}(y) \leq X}} 1.
\]
We claim that for all $i$ there exists a constant $b_{G, x_i} > 0$ such that
\begin{align}
\label{eProjAsymptotic}
\sum_{\substack{y \in \text{Prim}(\mathcal{S}^{G - \{\textup{id}\}})(\text{solv.}) \\ \pi(y) = x_i \\ \text{Disc}(y) \leq X}} 1 \sim b_{G, x_i} \cdot X^{\frac{2}{\#G}} \cdot \log(X)^{\#\text{Inv}(G) - 1}.
\end{align}
To prove the claim, write $y = (y_g)_{g \in G - \{\textup{id}\}}$ and split the sum depending on the value of $y_h \bmod 8$ for each $h \in H(G) - \{\text{id}\}$. So let $\mathbf{a} = (a_h)_{h \in H(G) - \{\text{id}\}} \in (\Z/8\Z)^{H(G) - \{\text{id}\}}$. Then $G$, $x_i$ and $\mathbf{a}$ determine the restriction of $\psi$ to $I_2(2)$ and thus the $2$-adic valuation of the discriminant. Hence we can split equation (\ref{eProjAsymptotic}) in finitely many sums of the shape
\begin{align}
\label{eSplitmodulo8}
\sum_{\substack{y = (y_g)_{g \in G - \{\text{id}\}} \in \text{Prim}(\mathcal{S}^{G - \{\textup{id}\}})(\text{solv.}) \\ \pi(y) = x_i \\ y_h \equiv a_h \bmod 8 \text{ for } h \in H(G) - \{\text{id}\} \\ \prod_{h \in H(G) - \{\text{id}\}} y_h^{\#G/2} \leq C(x_i, \mathbf{a}) X}} 1,
\end{align}
where $C(x_i, \mathbf{a})$ is a constant. From Proposition \ref{Product structure} we can lower bound equation (\ref{eSplitmodulo8}) by
\begin{align}
\label{eUpperSplit}
\sum_{\substack{(y_h)_{h \in H(G) - \{\text{id}\}} \\ y_h \neq 1 \\ y_h \text{ coprime to } y_g \text{ for } g \in G - H(G) \\ y_h \text{ squarefree and pairwise coprime} \\ y_h \equiv a_h \bmod 8 \\ \prod_{h \in H(G) - \{\text{id}\}} y_h^{\#G/2} \leq C(x_i, \mathbf{a}) X}} 1
\end{align}
and upper bound equation (\ref{eSplitmodulo8}) by
\begin{align}
\label{eLowerSplit}
\sum_{\substack{(y_h)_{h \in H(G) - \{\text{id}\}} \\ y_h \text{ coprime to } y_g \text{ for } g \in G - H(G) \\ y_h \text{ squarefree and pairwise coprime} \\ y_h \equiv a_h \bmod 8 \\ \prod_{h \in H(G) - \{\text{id}\}} y_h^{\#G/2} \leq C(x_i, \mathbf{a}) X}} 1.
\end{align}
Basic analytic number theory allows one to give matching asymptotic formulas for equation (\ref{eUpperSplit}) and equation (\ref{eLowerSplit}), which together imply the claimed equation (\ref{eProjAsymptotic}).

Let us now recall the statement of Tannery's theorem, which, in modern terms, is just the dominated convergence theorem on $\ell^1$. Let $f_i : \mathbb{Z}_{\geq 1} \rightarrow \mathbb{C}$ be functions and let
\[
S(X) = \sum_{i = 1}^\infty f_i(X)
\]
and suppose that $\lim_{X \rightarrow \infty} f_i(X) = b_i$. If $|f_i(X)| \leq M_i$ and
\begin{align}
\label{eBound}
\sum_{i = 1}^\infty M_i < \infty,
\end{align}
then $\lim_{X \rightarrow \infty} S(X)$ exists, $\sum_{i = 1}^\infty b_i$ converges absolutely and
\begin{align}
\label{eTannery}
\lim_{X \rightarrow \infty} S(X) = \sum_{i = 1}^\infty b_i.
\end{align}
We apply Tannery's theorem with
\[
f_i(X) := \frac{1}{X^{\frac{2}{\#G}} \cdot \log(X)^{\#\text{Inv}(G) - 1}} \cdot \sum_{\substack{y \in \text{Prim}(\mathcal{S}^{G - \{\textup{id}\}})(\text{solv.}) \\ \pi(y) = x_i \\ \text{Disc}(y) \leq X}} 1,
\]
so that $\lim_{X \rightarrow \infty} f_i(X) = b_{G, x_i}$ by equation (\ref{eProjAsymptotic}). It follows from equation (\ref{eUpperConverge}) of Theorem \ref{Main1} that equation (\ref{eBound}) is satisfied with
\[
M_i = \frac{C_G}{\prod_{g \in G - H(G)} y_g^{2e_g/\# G}},
\]
where $C_G$ is a constant. Then equation (\ref{eTannery}) shows that
\[
\lim_{X \rightarrow \infty} \frac{1}{X^{\frac{2}{\#G}} \cdot \log(X)^{\#\text{Inv}(G) - 1}} \cdot \sum_{\substack{y \in \text{Prim}(\mathcal{S}^{G - \{\textup{id}\}})(\text{solv.}) \\ \text{Disc}(y) \leq X}} 1 = \lim_{X \rightarrow \infty} S(X) = \sum_{i = 1}^L b_{G, x_i}.
\]
Since $b_{G, x_i} > 0$ and the sum is non-empty by Shafarevich's theorem, the theorem follows.
\end{proof}

\section{Arbitrary nilpotent groups and number fields}
\label{sMain}
In our first subsection we restrict ourselves to the case that $G$ is a finite $l$-group, but we work with an arbitrary number field $K$. This is then extended to general nilpotent $G$ in the second subsection.

\subsection{\texorpdfstring{Finite $l$-groups}{Finite l-groups}}
Our first goal is to extend Proposition \ref{It is a parametrization} to general number fields and general nilpotent groups. In what follows we shall use the notation introduced in Section \ref{sPrelim}. Let $K$ be a number field, picked inside $\mathbb{Q}^{\text{sep}}$. Denote by $\Omega_K$ the set of all places of $K$. For each finite place $\mathfrak{q}$ in $\Omega_K$, lying above a rational prime $q$, the restriction of the map $i_q^{*}$ provides us with an inclusion
$$
i_{\mathfrak{q}}^{*}: G_{K_{\mathfrak{q}}} \to G_{K}.
$$
Denote by $k_{\mathfrak{q}}$ the residue field of $K$ at $\mathfrak{q}$. Write
$$
I_{\mathfrak{q}} := \text{ker}(G_{K_{\mathfrak{q}}} \to G_{k_{\mathfrak{q}}})
$$ 
for the inertia subgroup. Let now $l$ be any prime number. In what follows the group $\mathbb{F}_l$ will always be implicitly interpreted as a Galois module with trivial action, whenever the notation suggests an implicit action of a group on $\mathbb{F}_l$. We denote by $H_{\text{unr}}^1(G_{K_{\mathfrak{q}}}, \mathbb{F}_l)$ the image, via inflation, of $H^1(G_{k_{\mathfrak{q}}}, \mathbb{F}_l)$ in $H^1(G_{K_{\mathfrak{q}}}, \mathbb{F}_l)$.

For a subset $S \subseteq \Omega_K$, containing all the archimedean places of $K$, we consider the map
$$
\Phi_K(l, S): H^1(G_K, \mathbb{F}_l) \to \bigoplus_{\mathfrak{q} \in \Omega_K-S} \frac{H^1(G_{K_{\mathfrak{q}}}, \mathbb{F}_l)}{H_{\text{unr}}^1(G_{K_{\mathfrak{q}}}, \mathbb{F}_l)}.
$$
In case $S$ consists exactly of the archimedean places, we will denote the resulting map simply by $\Phi_K(l)$. We start by recalling the following classical fact.

\begin{proposition} 
\label{finite kernel cokernel}
The abelian groups $\textup{ker}(\Phi_K(l))$ and $\textup{coker}(\Phi_K(l))$ are finite.
\end{proposition}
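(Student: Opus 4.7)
The plan is to treat the finiteness of $\ker(\Phi_K(l))$ and $\mathrm{coker}(\Phi_K(l))$ separately, invoking class field theory for the first and global (Poitou--Tate) duality for the second.

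First I would observe that an element of $\ker(\Phi_K(l))$ is a continuous homomorphism $\chi \colon G_K \to \FF_l$ whose restriction to each decomposition group $G_{K_{\mathfrak{q}}}$ at a finite prime $\mathfrak{q}$ lies in $H^1_{\text{unr}}(G_{K_{\mathfrak{q}}}, \FF_l)$; equivalently, $\chi$ is unramified at every finite place. Since $\FF_l$ is abelian, $\chi$ factors through $G_K^{\textup{ab}}$, and by global class field theory it corresponds to a continuous character of the idele class group $C_K$ that is trivial on every local unit group $\OO_{K_{\mathfrak{q}}}^{\times}$ at finite $\mathfrak{q}$. Such a character then factors through the Galois group of the maximal abelian extension of $K$ unramified at every finite place, which is a quotient of the narrow class group $\CL^{+}(K)$ modulo $l$ (with a bounded contribution from real archimedean places when $l=2$). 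Finiteness of this quotient immediately bounds $\ker(\Phi_K(l))$.

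For the cokernel I would invoke the Poitou--Tate nine-term exact sequence for the finite Galois module $\FF_l$ with Tate dual $\mu_l$. Its middle row yields the exact sequence
\[
H^1(G_K, \FF_l) \longrightarrow \bigoplus_{v \in \Omega_K}{}^{\!\prime} H^1(G_{K_v}, \FF_l) \longrightarrow H^1(G_K, \mu_l)^{\vee},
\]
where the direct sum is restricted with respect to the unramified local subgroups. Composing with the quotient by $H^1_{\text{unr}}$ at each finite place, and noting that the archimedean contribution is finite, I would obtain a factorization of $\Phi_K(l)$ through this restricted sum; a diagram chase then identifies $\mathrm{coker}(\Phi_K(l))$ with a subquotient of the dual Selmer group
\[
\ker\!\left(H^1(G_K, \mu_l) \longrightarrow \bigoplus_{v \in \Omega_K} \frac{H^1(G_{K_v}, \mu_l)}{H^1_{\text{unr}}(G_{K_v}, \mu_l)}\right).
\]
By Kummer theory (after base changing to $K(\zeta_l)$ if necessary and descending by Galois invariants), this dual Selmer group is contained in the subgroup of $K(\zeta_l)^{\times}/K(\zeta_l)^{\times l}$ consisting of classes whose valuation is divisible by $l$ at every finite place, i.e.\ classes represented by $l$-th powers of fractional ideals. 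Its finiteness therefore reduces to the finiteness of $\CL(K(\zeta_l))[l]$ and of $\OO_{K(\zeta_l)}^{\times}/\OO_{K(\zeta_l)}^{\times l}$, both classical.

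The main technical subtlety is the bookkeeping of local conditions at primes $v \mid l$ and at archimedean places, where the unramified subgroup and its Tate-pairing annihilator have to be computed separately and shown to contribute only a finite discrepancy; once this is handled, both finiteness assertions follow from the finiteness of class groups and of $S$-unit groups modulo $l$-th powers.
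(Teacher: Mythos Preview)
Your treatment of the kernel is essentially the paper's own argument: the paper identifies $\ker(\Phi_K(l))$ with $\mathrm{Cl}(K,m_\infty)^\vee[l]$ via class field theory and bounds it by $l^{[K:\mathbb{Q}]}\cdot\#\mathrm{Cl}(K)[l]$, which is exactly the ``everywhere-unramified characters factor through the narrow class group'' reasoning you give.

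For the cokernel the two approaches diverge. The paper does not argue at all: it simply cites Shafarevich \cite[Theorem~5, eq.~(14) and (16)]{Shafarevich}, where the relevant finiteness is established as part of a computation of generator--relation numbers for $\mathcal{G}_K^{\text{pro}-l}$. Your route via the Poitou--Tate nine-term sequence and Kummer theory is a genuine alternative, and in fact dovetails nicely with the machinery the paper invokes later (Theorem~\ref{tPT} and Theorem~\ref{tCharLoc}). Concretely, dualising $\Phi_K(l)$ and using local Tate duality plus exactness of the Poitou--Tate sequence shows that, up to a finite discrepancy from the places above $l$ and the archimedean places, $\mathrm{coker}(\Phi_K(l))^\vee$ embeds in the everywhere-unramified Selmer group for $\mu_l$; the latter sits in a short exact sequence between $\mathcal{O}_K^\times/\mathcal{O}_K^{\times l}$ and $\mathrm{Cl}(K)[l]$, hence is finite. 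One small remark: the base change to $K(\zeta_l)$ you mention is unnecessary, since $H^1(G_K,\mu_l)\cong K^\times/K^{\times l}$ holds over any $K$ by the Kummer sequence. What your approach buys is self-containment and a transparent link to the duality framework used elsewhere in the paper; what the citation buys is brevity.
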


\begin{proof}
By class field theory we have a canonical identification
$$
\text{ker}(\Phi_K(l)) = \textup{Cl}(K, m_{\infty})^{\vee}[l],
$$
where $m_{\infty}$ is the modulus consisting of all archimedean places. Therefore we have that
$$
\#\text{ker}(\Phi_K(l))=\#\textup{Cl}(K,m_{\infty})^{\vee}[l] \leq l^{[K:\mathbb{Q}]} \cdot \#\text{Cl}(K)^{\vee}[l], 
$$
where the first factor $l^{[K:\mathbb{Q}]}$ can be dropped when $l$ is odd. Therefore the finiteness of $\text{ker}(\Phi_K(l))$ follows from the finiteness of $\text{Cl}(K)$. The finiteness of $\text{coker}(\Phi_K(l))$ is established in \cite[Theorem 5, eq. (14) and (16)]{Shafarevich}.
\end{proof}

The following important fact falls as an easy consequence of Proposition \ref{finite kernel cokernel}.

\begin{proposition} 
\label{generalized cleaners}
There exists a finite set of places $S$, containing all archimedean places, such that $\Phi_K(l, S)$ is surjective. 
\end{proposition}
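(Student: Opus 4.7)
The plan is to derive surjectivity of $\Phi_K(l, S)$ for a suitable finite $S$ directly from the finiteness of $\operatorname{coker}(\Phi_K(l))$ provided by Proposition \ref{finite kernel cokernel}. The key observation is that $\Phi_K(l, S)$ factors as the composition of $\Phi_K(l)$ with the natural projection
\[
\pi_S: \bigoplus_{\mathfrak{q} \in \Omega_K \setminus \{\text{arch}\}} \frac{H^1(G_{K_{\mathfrak{q}}}, \mathbb{F}_l)}{H^1_{\text{unr}}(G_{K_{\mathfrak{q}}}, \mathbb{F}_l)} \twoheadrightarrow \bigoplus_{\mathfrak{q} \in \Omega_K \setminus S} \frac{H^1(G_{K_{\mathfrak{q}}}, \mathbb{F}_l)}{H^1_{\text{unr}}(G_{K_{\mathfrak{q}}}, \mathbb{F}_l)}
\]
that simply forgets the coordinates at places in $S$. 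Call the source of $\pi_S$ (i.e.\ the target of $\Phi_K(l)$) $W$, and the target $W_S$.

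First I would invoke Proposition \ref{finite kernel cokernel} to obtain finitely many elements $c_1, \ldots, c_n \in W$ whose classes generate the finite group $\operatorname{coker}(\Phi_K(l))$. Because $W$ is a direct sum, each $c_i$ has finite support, i.e.\ is supported on a finite set of finite places $T_i \subseteq \Omega_K$. I would then define
\[
S := \{\text{all archimedean places}\} \cup T_1 \cup \cdots \cup T_n,
\]
which is finite by construction.

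Next I would verify surjectivity of $\Phi_K(l, S) = \pi_S \circ \Phi_K(l)$ for this choice. Given any $w_S \in W_S$, use the direct-sum structure to lift $w_S$ to an element $\widetilde{w} \in W$ obtained by extending by zero at the places in $S$; then $\pi_S(\widetilde{w}) = w_S$. By the choice of $c_1, \ldots, c_n$, there exist $x \in H^1(G_K, \mathbb{F}_l)$ and scalars $a_1, \ldots, a_n \in \mathbb{F}_l$ with
\[
\widetilde{w} = \Phi_K(l)(x) + \sum_{i=1}^{n} a_i c_i.
\]
Applying $\pi_S$ and using that each $c_i$ is supported inside $S$, so that $\pi_S(c_i) = 0$, one obtains
\[
w_S = \pi_S(\widetilde{w}) = \pi_S(\Phi_K(l)(x)) = \Phi_K(l, S)(x),
\]
which shows that $\Phi_K(l, S)$ is surjective.

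There is no serious obstacle here: the argument is a standard reduction from finite cokernel to surjectivity after enlarging the set of ramified places, and it uses nothing beyond Proposition \ref{finite kernel cokernel} together with the fact that the target is a direct sum (hence any element has finite support and any partial vector can be extended by zero). The only small point to be careful about is that the lift $\widetilde{w}$ of $w_S$ is constructed using the direct sum decomposition, which is why we need the codomain to be a direct sum rather than a product.
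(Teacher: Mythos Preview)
Your proof is correct and follows essentially the same approach as the paper: both use Proposition~\ref{finite kernel cokernel} to find finitely many elements (or equivalently, finitely many places) whose images generate $\operatorname{coker}(\Phi_K(l))$, include their supports in $S$, and then observe that any target vector can be corrected by something supported in $S$. The only cosmetic difference is that the paper phrases this as choosing $S$ so that $\bigoplus_{\mathfrak{q}\in S\setminus A}$ surjects onto the cokernel, while you phrase it as choosing cokernel generators $c_i$ and taking $S$ to contain their supports; these are trivially equivalent.
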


\begin{proof}
Write $A$ for the finite subset of archimedean places of $\Omega_K$. Thanks to Proposition \ref{finite kernel cokernel} we can find a finite set of places $A \subseteq S \subseteq \Omega_K$ such that the natural map
$$
\bigoplus_{\mathfrak{q} \in S - A} \frac{H^1(G_{K_\mathfrak{q}}, \mathbb{F}_l)}{H_{\text{unr}}^1(G_{K_\mathfrak{q}}, \mathbb{F}_l)} \to \text{coker}(\Phi_K(l))
$$
is surjective. This implies that for any vector 
\[
v \in \bigoplus_{\mathfrak{q} \in \Omega_K-S} \frac{H^1(G_{K_\mathfrak{q}}, \mathbb{F}_l)}{H_{\text{unr}}^1(G_{K_\mathfrak{q}}, \mathbb{F}_l)}
\]
we can find $w \in \bigoplus_{\mathfrak{q} \in S - A} \frac{H^1(G_{K_\mathfrak{q}}, \mathbb{F}_l)}{H_{\text{unr}}^1(G_{K_\mathfrak{q}}, \mathbb{F}_l)}$ and a global character $\chi \in H^1(G_K, \mathbb{F}_l)$ such that
$$
\Phi_K(l)(\chi) = v + w.
$$
Therefore, since $w$ is entirely supported in $S$, we conclude that
$$
\Phi_K(l, S)(\chi) = v.
$$
Hence we have shown that the map $\Phi_K(l, S)$ is surjective with this choice of $S$, which is precisely the desired conclusion. 
\end{proof}

Of course if a set $S$ as in Proposition \ref{generalized cleaners} works, then any larger set works as well. We fix once and for all a finite set $S_{\text{clean}}(l)$ as in Proposition \ref{generalized cleaners}, making sure that it also contains all places above $l$. We denote by $\widetilde{\Omega}_K(l)$ the subset of $\Omega_K - S_{\text{clean}}(l)$ with
\begin{align}
\label{eLocalChar}
\frac{H^1(G_{K_{\mathfrak{q}}}, \mathbb{F}_l)}{H_{\text{unr}}^1(G_{K_{\mathfrak{q}}}, \mathbb{F}_l)} \neq 0.
\end{align}
For $\mathfrak{q} \in \Omega_K - S_{\text{clean}}(l)$, it follows from local class field theory that equation (\ref{eLocalChar}) is equivalent to the condition
$$
\# \left(\mathcal{O}_K/\mathfrak{q} \right) \equiv 1 \bmod l.
$$
Write $K^{\text{pro}-l}$ for the compositum of all finite Galois extensions $L$ of $K$ with $[L : K]$ a power of $l$.

\begin{proposition} 
\label{only with local roots of unity}
Let $\mathfrak{q} \in \Omega_K$ be a finite place coprime to $l$ such that $\# \left(\mathcal{O}_K/\mathfrak{q} \right) \not \equiv 1 \bmod l$. Then $\mathfrak{q}$ is unramified in every finite extension $L/K$ inside $K^{\textup{pro}-l}$.
\end{proposition}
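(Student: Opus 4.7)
The plan is to reduce the problem to a purely local calculation in the tame quotient of $G_{K_\mathfrak{q}}$. Any finite subextension $L$ of $K^{\textup{pro}-l}/K$ is contained in its Galois closure $L'/K$, and $L'$ has $l$-power degree over $K$ (the compositum of finitely many Galois $l$-extensions is again a Galois $l$-extension). Since ramification indices divide those of any larger extension, it suffices to prove that $\mathfrak{q}$ is unramified in every finite Galois extension $L'/K$ of $l$-power degree. So I would fix such an $L'$ and set $G := \Gal(L'/K)$, a finite $l$-group.

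Next I would localize. Pick a prime $\mathfrak{Q}$ of $L'$ above $\mathfrak{q}$, and write $D \subseteq G$ for the decomposition group and $I \subseteq D$ for the inertia subgroup. Since $\mathfrak{q}$ is coprime to $l$, the residue characteristic $p$ differs from $l$, so wild inertia (which is a pro-$p$ group) maps trivially into the $l$-group $G$; hence $I$ is a quotient of the tame inertia, and is therefore cyclic of $l$-power order. The standard presentation of the tame quotient of $G_{K_\mathfrak{q}}$ (see e.g.\ Neukirch--Schmidt--Wingberg) is generated topologically by a Frobenius lift $\sigma$ and a tame inertia generator $\tau$ subject to
\[
\sigma \tau \sigma^{-1} = \tau^{q}, \qquad q := \#(\mathcal{O}_K/\mathfrak{q}).
\]
Let $\bar{\sigma},\bar{\tau}\in D$ denote the images; both have $l$-power order, and the order of $\bar{\tau}$ is the ramification index at $\mathfrak{Q}$.

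Now I would run the key computation. Let $l^m$ be the order of $\bar{\sigma}$. Iterating the braid relation $l^m$ times yields
\[
\bar{\tau} \;=\; \bar{\sigma}^{l^m}\bar{\tau}\bar{\sigma}^{-l^m} \;=\; \bar{\tau}^{\,q^{l^m}},
\]
so the order of $\bar{\tau}$ divides $q^{l^m}-1$. By hypothesis $q \not\equiv 1 \pmod{l}$, so the image of $q$ in $\mathbb{F}_l^\times$ has some order $d>1$ with $d \mid l-1$. Since $\gcd(l^m, l-1)=1$, the image of $q^{l^m}$ in $\mathbb{F}_l^\times$ still has order $d>1$, and in particular $l \nmid q^{l^m}-1$. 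But the order of $\bar{\tau}$ is a power of $l$, so it must be $1$. Hence $I$ is trivial, and $\mathfrak{q}$ is unramified in $L'/K$, which is what we wanted.

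There is no real obstacle: the argument is a one-line observation once the tame presentation is in place. The only mild subtlety is the reduction to the Galois case, which I would justify by taking the Galois closure and invoking that compositum of finitely many finite Galois $l$-extensions is again a finite Galois $l$-extension. A cleaner restatement of the conclusion is that, under the hypothesis $q \not\equiv 1 \pmod{l}$, the maximal pro-$l$ extension of $K_\mathfrak{q}$ coincides with the maximal unramified pro-$l$ extension, and the global statement follows immediately.
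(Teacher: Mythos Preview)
Your proof is correct and follows essentially the same route as the paper: both localize at $\mathfrak{q}$, use that the ramification is tame (since $\mathfrak{q}\nmid l$), and then exploit the key arithmetic fact that if $q\not\equiv 1\pmod l$ then $q^{l^m}\not\equiv 1\pmod l$ for every $m\ge 0$. The only cosmetic difference is that you invoke the explicit tame presentation $\sigma\tau\sigma^{-1}=\tau^{q}$, whereas the paper phrases the same step via local class field theory (a totally ramified cyclic degree-$l$ extension of the unramified degree-$f$ extension forces $q^{f}\equiv 1\bmod l$); the content is identical.
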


\begin{proof}
It suffices to prove the proposition locally at $\mathfrak{q}$. Take a positive integer $f$ and let $K_{\mathfrak{q}^f}$ be the unique unramified extension of $K_{\mathfrak{q}}$ of degree equal to $f$ with residue field denoted by $k_{\mathfrak{q}^f}$. Then if we have a cyclic totally ramified degree $l$ extension of $K_{\mathfrak{q}^f}$ it follows from local class field theory and the fact that $\mathfrak{q}$ is coprime to $l$
\[
\#k_{\mathfrak{q}^f} = \#k_{\mathfrak{q}}^f \equiv 1 \bmod l.
\]
Now, if $f$ is a power of $l$ itself, we conclude that $\#k_{\mathfrak{q}}$ is already congruent to $1$ modulo $l$ contrary to our assumption that $\# \left(\mathcal{O}_K/\mathfrak{q} \right) \not \equiv 1 \bmod l$.
\end{proof}

We remark that Proposition \ref{only with local roots of unity} certainly applies to any place $\mathfrak{q} \in \Omega_K - S_{\text{clean}}(l) - \widetilde{\Omega}_K(l)$. Let $\mathfrak{q} \in \widetilde{\Omega}_K(l)$. Thanks to Proposition \ref{generalized cleaners}, there exists a character 
$$
\chi_{\mathfrak{q}} \in H^1(G_K, \mathbb{F}_l)
$$
such that $\Phi_K(l, S_{\text{clean}}(l))(\chi_{\mathfrak{q}})$ has non-trivial coordinate precisely at $\mathfrak{q}$ and at no other places in $\Omega_K-S_{\text{clean}}(l)$. Fix once and for all such a choice of $\chi_{\mathfrak{q}}$ for each $\mathfrak{q} \in \widetilde{\Omega}_K(l)$. By construction
$$
\{\chi_{\mathfrak{q}} \}_{\mathfrak{q} \in \widetilde{\Omega}_K(l)}
$$
is a linearly independent set. Furthermore by Proposition \ref{finite kernel cokernel} we obtain that the subspace
$$
\langle \{\chi_{\mathfrak{q}} \}_{\mathfrak{q} \in \widetilde{\Omega}_K(l)} \rangle \subseteq H^1(G_K, \mathbb{F}_l)
$$
has finite index. Additionally, there exists a positive integer $t$ and a basis
$$
J := \{\chi_i\}_{i=1}^t \subseteq \text{ker}(\Phi_K(l, S_{\text{clean}}(l)))
$$
such that $J \cup \{\chi_{\mathfrak{q}} \}_{\mathfrak{q} \in \widetilde{\Omega}_K(l)}$ is a \emph{basis} of $H^1(G_K, \mathbb{F}_l)$. Fix once and for all such a choice of $J$. We denote by
$$
\mathcal{B}(K, l) := J \cup \{\chi_{\mathfrak{q}} \}_{\mathfrak{q}}
$$
this fixed choice of a basis. Put
$$
\mathcal{G}_K^{\text{pro}-l}:=\text{Gal}(K^{\text{pro}-l}/K).
$$
For each finite place $\mathfrak{q} \in \Omega_K$ we denote by
$$
I_{\mathfrak{q}}(l) := \text{proj}(G_K \to \mathcal{G}_K^{\text{pro}-l}) \circ i_q^{*}(I_{\mathfrak{q}}).
$$
We have the following basic fact. 

\begin{proposition} 
\label{general tame inertia procyclic: in general}
The group $I_{\mathfrak{q}}(l)$ is pro-cyclic for each finite place $\mathfrak{q} \in \Omega_K$ coprime to $l$.
\end{proposition}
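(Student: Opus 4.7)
The plan is to mimic the proof of the first part of Proposition \ref{cyclic inertia generators}, where the analogous statement was established for $l = 2$ and $K = \mathbb{Q}$. All the ingredients we need are already present: it is only a matter of re-running the argument in this slightly more general setting.

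The starting point is that the hypothesis "$\mathfrak{q}$ coprime to $l$" means that the residue characteristic $p$ of $\mathfrak{q}$ is different from $l$. Consequently, any finite pro-$l$ extension of $K_{\mathfrak{q}}$ is automatically tamely ramified, because wild ramification forces the degree to be divisible by $p$. First I would recall the standard description of the absolute inertia subgroup: its wild part $P_{\mathfrak{q}}$ is a pro-$p$ group, and the tame quotient $I_{\mathfrak{q}}/P_{\mathfrak{q}}$ is canonically isomorphic to $\varprojlim_n \mu_n(\overline{k}_{\mathfrak{q}}) \cong \prod_{\ell \neq p} \mathbb{Z}_\ell$, the prime-to-$p$ completion of $\mathbb{Z}$.

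Next I would take the maximal pro-$l$ quotient of $I_{\mathfrak{q}}$. Since the wild inertia is pro-$p$ with $p \neq l$, it dies in any pro-$l$ quotient, so this maximal pro-$l$ quotient is simply the pro-$l$ part of $\prod_{\ell \neq p} \mathbb{Z}_\ell$, which is $\mathbb{Z}_l$. In particular, the maximal pro-$l$ quotient of $I_{\mathfrak{q}}$ is pro-cyclic. Now the map $G_K \to \mathcal{G}_K^{\text{pro}-l}$ lands in a pro-$l$ group, so its restriction to $i_{\mathfrak{q}}^*(I_{\mathfrak{q}})$ factors through this maximal pro-$l$ quotient. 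By definition, $I_{\mathfrak{q}}(l)$ is the image of this composed map, hence a continuous image of the pro-cyclic group $\mathbb{Z}_l$, hence itself pro-cyclic, which is exactly what we want.

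There is no real obstacle here; the statement is a formal consequence of the tame/wild decomposition of the inertia group at a prime of residue characteristic different from $l$, together with the observation that a continuous quotient of a pro-cyclic group is pro-cyclic. The only subtlety worth flagging is that $\mathfrak{q}$ need not lie in $\widetilde{\Omega}_K(l)$ nor outside $S_{\textup{clean}}(l)$; the argument is uniform and does not interact with the choice of $S_{\textup{clean}}(l)$ made earlier. If one wanted a generator analogous to $\sigma_p$ in Proposition \ref{cyclic inertia generators}, one could simply pick any topological generator of $I_{\mathfrak{q}}(l)$, but for the bare statement of pro-cyclicity this is not needed.
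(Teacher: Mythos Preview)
Your proposal is correct and follows essentially the same route as the paper: both arguments use that the wild inertia, being pro-$p$ with $p \neq l$, dies in any pro-$l$ quotient, so $I_{\mathfrak{q}}(l)$ is a continuous image of the tame inertia quotient, which is pro-cyclic. The only cosmetic difference is that you invoke the standard structure theorem $I_{\mathfrak{q}}/P_{\mathfrak{q}} \cong \prod_{\ell \neq p} \mathbb{Z}_\ell$ directly, whereas the paper rederives the pro-cyclicity of the tame quotient by hand from the description of tame totally ramified extensions as $L(\sqrt[d]{\pi})$.
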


\begin{proof}
Let $L$ be a non-archimedean local field of characteristic $0$ and write $p$ for its residue characteristic. Let $d$ be a positive integer coprime to $p$. Then every finite totally ramified extension of $L$ of degree equal to $d$ can be obtained as $L(\sqrt[d]{\pi})$ for $\pi$ a uniformizer of $L$. For an elementary proof of this well-known fact see \cite[Proposition A.5]{KP1}. Applying this repeatedly to all finite unramified extensions of $L$, we conclude that
$$
\frac{I_L}{I_L^{\text{wild}}}
$$
is a pro-cyclic group, where $I_L$ is the inertia subgroup and $I_L^{\text{wild}}$ is the wild inertia subgroup. Since $\mathfrak{q}$ is coprime to $l$, we conclude that $I_{\mathfrak{q}}(l)$ is a quotient of the pro-cyclic group $\frac{I_{\mathfrak{q}}}{I_{\mathfrak{q}}^{\text{wild}}}$, which gives in particular the desired conclusion. 
\end{proof}

\begin{remark}
Since the group $I_{\mathfrak{q}}(l)$ is a pro-cyclic pro-$l$ group, it is either isomorphic to a finite group of order a power of $l$ or isomorphic to $\mathbb{Z}_l$. In case a primitive $l$-th root of unity $\zeta_l$ is in $K$, then we claim that $I_{\mathfrak{q}}(l)$ is infinite. Indeed, we have the infinitely ramified subextension
$$
K(\sqrt[\infty]{1}, \sqrt[\infty]{\alpha})
$$
of $K^{\textup{pro}-l}/K$ (which is contained in $K^{\textup{pro}-l}/K$ exactly because $\zeta_l$ is in $K$) given by any $\alpha$ in $K$ with $v_{\mathfrak{q}}(\alpha)=1$. Therefore we conclude that if $K$ possesses a non-trivial $l$-th root of unity, then
$$
I_{\mathfrak{q}}(l) \simeq_{\textup{top.gr.}} \mathbb{Z}_l
$$
for every finite place $\mathfrak{q} \in \Omega_K$ coprime to $l$. Instead if $\zeta_l$ is not in $K$, we observe that Proposition \ref{only with local roots of unity} shows that the group $I_{\mathfrak{q}}(l)$ is trivial in case $\mathfrak{q}$ is a finite place of $\Omega_K$ that is coprime to $l$ and satisfies $\left(\mathcal{O}_K/\mathfrak{q} \right) \not \equiv 1 \bmod l$.
\end{remark}

We fix once and for all a topological generator $\sigma_{\mathfrak{q}}$ of $I_{\mathfrak{q}}(l)$ for all $\mathfrak{q} \in \widetilde{\Omega}_K(l)$ in the following manner. Observe that $\chi_{\mathfrak{q}}(\sigma_{\mathfrak{q}}) \neq 0$ for any topological generator of $I_{\mathfrak{q}}(l)$, since the character $\chi_{\mathfrak{q}}$ ramifies at $\mathfrak{q}$. Hence we can always pick a generator $\sigma_{\mathfrak{q}}$ with the normalization $\chi_{\mathfrak{q}}(\sigma_{\mathfrak{q}})=1$. We make such a choice of $\sigma_{\mathfrak{q}}$ once and for all. In case $\mathfrak{q} \in \Omega_K - S_{\text{clean}}(l) - \widetilde{\Omega}_K(l)$, then the group $I_{\mathfrak{q}}(l)$ is trivial by Proposition \ref{only with local roots of unity}, and we declare $\sigma_{\mathfrak{q}}:=\text{id}$. 

Now it follows by construction that
$$
\chi(\sigma_{\mathfrak{q}}) = \delta_{\chi_{\mathfrak{q}}}(\chi)
$$
for each $\chi \in \mathcal{B}(K, l)$ and $\mathfrak{q} \in \widetilde{\Omega}_K(l)$, where $\delta$ denotes the Kronecker delta function. Therefore we can complete the set 
$$
\{\sigma_{\mathfrak{q}} \}_{\mathfrak{q} \in \widetilde{\Omega}_K(l)}
$$
to a minimal set of generators
$$
\{\sigma_i\} \cup \{\sigma_{\mathfrak{q}} \}_{\mathfrak{q} \in \widetilde{\Omega}_K(l)},
$$
which is dual to the basis $\mathcal{B}(K, l)$, i.e. 
\begin{align*}
&\chi_i(\sigma_{\mathfrak{q}}) = 0 = \chi_{\mathfrak{q}}(\sigma_i) \quad && \text{ for each } i \in [t] \text{ and } \mathfrak{q} \in \widetilde{\Omega}_K(l) \\
&\chi_i(\sigma_j) = \delta_i(j) \quad&& \text{ for each } i, j \in [t] \\
&\chi_{\mathfrak{q}}(\sigma_{\mathfrak{q}'}) = \delta_{\mathfrak{q}}(\mathfrak{q}') \quad && \text{ for every } \mathfrak{q}, \mathfrak{q}' \in \widetilde{\Omega}_K(l).
\end{align*}
We denote by
$$
\mathcal{B}^{\vee}(K, l) := \{\sigma_i\}_{i=1}^{t} \cup \{\sigma_{\mathfrak{q}} \}_{\mathfrak{q} \in \widetilde{\Omega}_K(l)}
$$
this special set of topological generators of $\mathcal{G}_K^{\text{pro}-l}$. 

Let $L/K$ be a finite Galois extension inside $K^{\text{pro}-l}$ with Galois group $G := \text{Gal}(L/K)$. Take a $2$-cocycle $\theta$
$$
\theta: G^2 \to \mathbb{F}_l
$$
with the requirement that $\theta(\text{id}, \text{id}) = 0$. By the same argument as before, every class in $H^2(G, \mathbb{F}_l)$ can be represented by such a $2$-cocycle $\theta$. Consider the group
$$
(\mathbb{F}_l \times G,*_{\theta}),
$$
where the group law is
$$
(a_1, g_2)*_{\theta}(a_2, g_2)=(a_1 + a_2 + \theta(g_1, g_2), g_1g_2).
$$
Our assumption on $\theta$ ensures that $(0, \text{id})$ is the trivial element of $(\mathbb{F}_l \times G,*_{\theta})$. We have the following generalization of Proposition \ref{defining normalized phi}.

\begin{proposition} 
\label{defining normalized phi: in general}
Let $l$ be a prime number. Let $K$ be a number field and let $L$ be an extension with $G = \Gal(L/K)$ a finite $l$-group. Suppose that $\theta$ is non-trivial in $H^2(G, \mathbb{F}_l)$. \\
$(a)$ The natural projection map $\pi: G_K \twoheadrightarrow G$ can be lifted to a surjective homomorphism
$$
\psi: G_K \to (\mathbb{F}_l \times G,*_{\theta})
$$
if and only if $\theta$ is trivial in $H^2(G_{K_v}, \mathbb{F}_l)$ for each place $v$ that ramifies in $L/K$. Moreover, if $\psi$ is a lift, then the $\mathbb{F}_l$-coordinate of $\psi$ is a continuous $1$-cochain $\phi(\psi): G_K \rightarrow \FF_l$ with
$$
\diff(-\phi(\psi)) = \theta.
$$
Conversely, given any such continuous $1$-cochain $\phi: G_K \rightarrow \FF_l$ with $\diff(-\phi) = \theta$, the assignment
$$
\psi(\phi)(g) = (\phi(g), \pi(g))
$$
is an epimorphism lifting the canonical projection $\pi: G_K \to G$ to an epimorphism $G_K \to (\mathbb{F}_l \times G,*_{\theta})$. The two assignments are mutual inverses. \\
$(b)$ In case one has a lift $\psi$ as in part $(a)$, then there is a unique one satisfying
$$
\phi(\psi)(\sigma) = 0 \textup{ for all } \sigma \in \mathcal{B}^{\vee}(K, l).
$$
\end{proposition}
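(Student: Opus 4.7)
The plan is to mirror the proof of Proposition \ref{defining normalized phi} line by line, substituting $K$ for $\mathbb{Q}$, $l$ for $2$, and $\mathcal{B}^{\vee}(K, l)$ for the old generating set $\{\sigma_p\}_{p \in \mathcal{P}} \cup \{\sigma_2(1), \sigma_2(2)\}$. For part $(a)$, the formal equivalence between a set-theoretic lift $g \mapsto (\phi(g), \pi(g))$ being a homomorphism and the cochain identity $\diff(-\phi) = \theta$ is a direct calculation with the twisted group law $*_\theta$, identical to the $K = \mathbb{Q}$ case. Surjectivity of any such lift follows from a Frattini argument: any non-trivial $\FF_l$-character of $(\FF_l \times G, *_\theta)$ must factor through $G$, for otherwise its kernel would split the extension classified by $\theta$, contradicting the non-triviality of $\theta$ in $H^2(G, \FF_l)$. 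Hence every such character is non-zero on the image of $\psi$, forcing $\psi$ to be surjective.

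The existence of a lift is equivalent to the inflation $\pi^{*}\theta$ vanishing in $H^2(G_K, \FF_l)$. By the local-global principle (Brauer--Hasse--Noether / Poitou--Tate), this reduces to vanishing of $\pi^{*}\theta$ in $H^2(G_{K_v}, \FF_l)$ at every place $v \in \Omega_K$. At unramified finite places, the argument of \cite[Proposition 4.4]{KP1} applies verbatim (the proof there is stated for odd $l$ but works for $l = 2$ with no change), so the local obstruction vanishes automatically. At archimedean places that are unramified in $L/K$, the decomposition subgroup of $G$ at $v$ is trivial, so $\pi^{*}\theta$ restricted to $G_{K_v}$ factors through the trivial group and is zero. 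Only the ramified places therefore contribute an obstruction, yielding the stated criterion.

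For part $(b)$, uniqueness is immediate: if two normalized lifts exist, their difference is a character $\chi \in H^1(G_K, \FF_l)$ vanishing on $\mathcal{B}^{\vee}(K, l)$, and by duality of $\mathcal{B}(K, l)$ and $\mathcal{B}^{\vee}(K, l)$ this forces $\chi = 0$. For existence, take any lift with $1$-cochain $\phi_0$. Its image cuts out a finite extension of $K$, so it is ramified at only finitely many finite places; hence $\phi_0(\sigma_{\mathfrak{q}}) = 0$ for all but finitely many $\mathfrak{q} \in \widetilde{\Omega}_K(l)$, and
\[
\eta := \sum_{i=1}^{t} \phi_0(\sigma_i)\, \chi_i + \sum_{\mathfrak{q} \in \widetilde{\Omega}_K(l)} \phi_0(\sigma_{\mathfrak{q}})\, \chi_{\mathfrak{q}}
\]
is a well-defined element of $H^1(G_K, \FF_l)$. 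Then $\phi_0 - \eta$ still satisfies $\diff(-(\phi_0 - \eta)) = \theta$ and vanishes on every element of $\mathcal{B}^{\vee}(K, l)$ by the duality. The main obstacle is the local-global step in part $(a)$: unlike over $\mathbb{Q}$, where Hilbert reciprocity at a single archimedean place closed the argument, one must invoke the full Poitou--Tate / Brauer--Hasse--Noether machinery together with the unramified vanishing from \cite{KP1} to reduce obstructions to ramified places; once this is in place, the rest of the argument is bookkeeping parallel to the $K = \mathbb{Q}$, $l = 2$ case.
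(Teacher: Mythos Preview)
Your proposal is correct and follows essentially the same approach as the paper's own proof: the cochain calculation, the Frattini/splitting argument for surjectivity, the local-global reduction together with \cite[Proposition 4.4]{KP1} for unramified places, and the subtraction of the finite correction $\eta$ for part $(b)$ are all exactly what the paper does. The only cosmetic difference is that the paper treats complex and unramified real archimedean places in two separate sentences whereas you handle them together via triviality of the decomposition group, which is equivalent.
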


\begin{proof}
We start with part (a). We claim that a map $\phi(\psi): G_K \to \mathbb{F}_l$ is the first coordinate of a homomorphism
$$
\psi: G_K \twoheadrightarrow (\mathbb{F}_l \times G,*_{\theta}), \quad g \mapsto (\phi(\psi)(g), \pi(g))
$$
if and only if
$$
\diff(-\phi(\psi)) = \theta.
$$
Indeed, since $\psi$ is a homomorphism, we obtain
$$
(\phi(\psi)(g_1g_2), \pi(g_1g_2)) = \psi(g_1g_2) = \psi(g_1) \psi(g_2) = (\phi(\psi)(g_1) + \phi(\psi)(g_2) + \theta(g_1, g_2), \pi(g_1g_2)),
$$
which is equivalent to
$$
\diff(-\phi(\psi))(g_1, g_2) = \phi(\psi)(g_1g_2) - \phi(\psi)(g_1) - \phi(\psi)(g_2) = \theta(g_1, g_2)
$$
as claimed. 

Now suppose that there exists $\phi(\psi)$ with $\diff(-\phi(\psi)) = \theta$. We claim that $(\phi(\psi), \pi)$ is surjective. Let us first show that all characters $(\mathbb{F}_l \times G,*_{\theta}) \rightarrow \mathbb{F}_l$ must come from $G$. If not, then the kernel of such a hypothetical character provides a splitting of $\theta$, which implies that $\theta$ is trivial contrary to our assumptions. Hence, since $\psi$ is surjective, the image of $(\phi(\psi), \pi)$ generates modulo the Frattini of $(\mathbb{F}_l \times G,*_{\theta})$, and therefore equals $(\mathbb{F}_l \times G,*_{\theta})$. 

Furthermore, we see that the lifting $\psi$ exists if and only if the inflation of $\theta$ to $H^2(G_K, \mathbb{F}_l)$ is trivial if and only if $\theta$ is trivial in $H^2(G_{K_v}, \mathbb{F}_l)$ for every place $v$ of $K$. Thanks to \cite[Proposition 4.4]{KP1}, the vanishing at the finite places unramified in $L/K$ is already guaranteed: notice that in \cite[Section 4]{KP1} the number $l$ is assumed to be an odd prime but Proposition \cite[Proposition 4.4]{KP1} also holds for $l=2$ with an identical proof. If$v$ is an archimedean complex place the vanishing is authomatic. If $v$ is an archimedean real place and the extension $L/K$ is unramified at $v$, then this means that for each place $w$ of $L$ above $v$ we have that $L_w=K_v$ and thus the embedding problem is locally trivial at $v$. This ends the proof of part $(a)$. 

We now prove part $(b)$. The uniqueness follows at once from part $(a)$ combined with the fact that $\mathcal{B}^{\vee}(K, l)$ is a system of topological generators for $\mathcal{G}_K^{\text{pro}-l}$. Indeed, an epimorphism $\psi$ as in part $(a)$ is entirely determined by its values on a set of topological generators. We next show the existence: here we will take advantage of the fact that $\mathcal{B}^{\vee}(K, l)$ is a minimal set of topological generators. Take a map $\phi: G_K \rightarrow \mathbb{F}_l$ satisfying 
$$
\diff(-\phi) = \theta. 
$$
The resulting epimorphism $\psi(\phi):\mathcal{G}_K^{\text{pro}-l} \twoheadrightarrow (\mathbb{F}_l \times G, *_{\theta})$ corresponds to a finite extension. As such we conclude that $\phi(\sigma_{\mathfrak{q}}) = 0$ for all but finitely many $\mathfrak{q} \in \widetilde{\Omega}_K(l)$. 
Therefore the sum
$$
\sum_{i = 1}^t \phi(\sigma_i) \cdot \chi_i + \sum_{\mathfrak{q} \in \widetilde{\Omega}_K(l)} \phi(\sigma_{\mathfrak{q}}) \cdot \chi_{\mathfrak{q}}
$$
is a well-defined element of $H^1(G_K, \mathbb{F}_l)$. Hence, since $\mathcal{B}(K, l)$ and $\mathcal{B}^{\vee}(K, l)$ are dual to each other, we obtain that
$$
\phi - \sum_{i = 1}^t \phi(\sigma_i) \cdot \chi_i - \sum_{\mathfrak{q} \in \widetilde{\Omega}_K(l)} \phi(\sigma_{\mathfrak{q}}) \cdot \chi_{\mathfrak{q}}
$$
vanishes at $\sigma_i$ for all $i \in [t]$ and at $\sigma_\mathfrak{q}$ for all $\mathfrak{q} \in \widetilde{\Omega}_K(l)$. This ends the proof of part $(b)$.  
\end{proof}

We denote the unique $1$-cochain as in part $(b)$ of Proposition \ref{defining normalized phi: in general} by $\phi(G, \theta)$. In case $\theta$ is trivial as a $2$-cocycle, then we choose $\phi(G, \theta) := 0$. With this choice, we see that $\phi(G, \theta)$ satisfies part $(b)$ of Proposition \ref{defining normalized phi: in general}, since the trivial character is the unique cyclic degree $l$ character vanishing at all $\sigma \in \mathcal{B}^{\vee}(K, l)$.

Denote by $\mathcal{S}_l$ the set $\{0,1\}^{[t]} \times \mathcal{S}_l'$, where $\mathcal{S}_l'$ is the set of squarefree integral ideals in $\mathcal{O}_K$ entirely supported in $\widetilde{\Omega}_K(l)$. To an element $(T, \mathfrak{b}) \in \mathcal{S}_l$ we attach the character
$$
\chi_{(T, \mathfrak{b})} := \sum_{\substack{i \in [t] \\ \pi_i(T) = 1}} \chi_i + \sum_{\substack{\mathfrak{q} \mid \mathfrak{b} \\ \mathfrak{q} \in \widetilde{\Omega}_K(l)}} \chi_{\mathfrak{q}}.
$$
Two pairs $(T, \mathfrak{b}), (T', \mathfrak{b}') \in \mathcal{S}_l$ are said to be \emph{coprime} in case $\mathfrak{b}, \mathfrak{b'}$ are coprime ideals and there does not exist a $j \in [t]$ such that $\pi_j(T) = \pi_j(T') = 1$. Formulated differently, the two pairs are coprime exactly when 
$$
\{\sigma \in \mathcal{B}^{\vee}(K, l): \chi_{(T, \mathfrak{b})}(\sigma) \neq 0 \} \cap \{\sigma' \in \mathcal{B}^{\vee}(K, l): \chi_{(T', \mathfrak{b}')}(\sigma') \neq 0 \}=\emptyset.
$$

Let $V$ be any finite set. We denote by $\text{Prim}(\mathcal{S}_l^{\mathbb{F}_l^V - \{(0, \ldots, 0)\}})$ the subset of $\mathcal{S}_l^{\mathbb{F}_l^V - \{(0, \ldots, 0)\}}$ consisting of vectors possessing pairwise coprime coordinates. We conclude this subsection by giving a bijection 
$$
\text{Pow}_l(V): \text{Prim}(\mathcal{S}_l^{\mathbb{F}_l^V - \{(0, \ldots, 0)\}}) \to H^1(G_K, \mathbb{F}_l)^{V},
$$
which sends a vector $(v_g)_{g \in \mathbb{F}_l^V - \{(0, \dots, 0)\}}$ to
$$
\text{Pow}_l(V)((v_g)_{g \in \mathbb{F}_l^V - \{(0, \dots, 0)\}}) := \left(\sum_{g \in \mathbb{F}_l^V - \{(0, \ldots, 0)\}} \pi_j(g) \cdot \chi_{v_g} \right)_{j \in V},
$$
where $\pi_j$ is the projection map on the $j$-th coordinate. Let us prove that this map is indeed a bijection.

\begin{proposition} 
\label{it is a bijection}
Let $V$ be a finite set. Then the map $\textup{Pow}_l(V)$ is a bijection.
\end{proposition}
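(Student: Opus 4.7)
The plan is to construct the inverse map explicitly by reading off, for each basis element in $\mathcal{B}(K,l)$, its ``support'' across the $V$-indexed tuple of characters. Given $(\chi_j)_{j \in V} \in H^1(G_K, \mathbb{F}_l)^V$, expand each character uniquely in the basis $\mathcal{B}(K,l) = \{\chi_i\}_{i=1}^t \cup \{\chi_\mathfrak{q}\}_{\mathfrak{q} \in \widetilde{\Omega}_K(l)}$ as
$$
\chi_j = \sum_{i=1}^t a_{j,i} \chi_i + \sum_{\mathfrak{q} \in \widetilde{\Omega}_K(l)} b_{j,\mathfrak{q}} \chi_\mathfrak{q},
$$
with $a_{j,i}, b_{j,\mathfrak{q}} \in \mathbb{F}_l$ and $b_{j,\mathfrak{q}} = 0$ for all but finitely many $\mathfrak{q}$. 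For each $i \in [t]$ set $g_i := (a_{j,i})_{j \in V} \in \mathbb{F}_l^V$, and for each $\mathfrak{q} \in \widetilde{\Omega}_K(l)$ set $h_\mathfrak{q} := (b_{j,\mathfrak{q}})_{j \in V} \in \mathbb{F}_l^V$.

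Next, for each nonzero $g \in \mathbb{F}_l^V - \{(0,\ldots,0)\}$, group together the basis elements whose support vector equals $g$: define $T_g \in \{0,1\}^{[t]}$ by declaring $\pi_i(T_g) = 1$ iff $g_i = g$, and set $\mathfrak{b}_g := \prod_{\mathfrak{q} : h_\mathfrak{q} = g} \mathfrak{q}$, which is a finite product and hence a squarefree ideal in $\widetilde{\Omega}_K(l)$. This produces a tuple $(v_g)_{g} \in \mathcal{S}_l^{\mathbb{F}_l^V - \{(0,\ldots,0)\}}$ with $v_g = (T_g, \mathfrak{b}_g)$. Because each basis element has a single support vector, the sets of basis elements attached to distinct $v_g$'s are disjoint, which is exactly the condition of pairwise coprimality. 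Hence $(v_g)_g$ lies in $\textup{Prim}(\mathcal{S}_l^{\mathbb{F}_l^V - \{(0,\ldots,0)\}})$.

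It then remains to check that the two assignments are mutually inverse. Applying $\textup{Pow}_l(V)$ to $(v_g)_g$ and computing the coefficient of a fixed basis element $\chi_i$ (resp.\ $\chi_\mathfrak{q}$) in the $j$-th resulting character gives $\sum_{g : i \in T_g} \pi_j(g) = \pi_j(g_i) = a_{j,i}$ (resp.\ $\sum_{g : \mathfrak{q} \mid \mathfrak{b}_g} \pi_j(g) = \pi_j(h_\mathfrak{q}) = b_{j,\mathfrak{q}}$) since the partition of basis elements by support vector means the inner sum has at most one term. Uniqueness of the expansion in $\mathcal{B}(K,l)$ thus recovers $(\chi_j)_{j \in V}$. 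Conversely, starting from a primitive tuple $(v_g)_g$, expanding $\sum_{g} \pi_j(g)\chi_{v_g}$ in the basis and applying the support-reading construction returns the same tuple, again by the pairwise coprimality, which ensures that each basis element appears in at most one $\chi_{v_g}$.

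The only substantive point is verifying that coprimality of the $(T_g, \mathfrak{b}_g)$ corresponds precisely to the condition that each basis element has a single support vector; everything else is bookkeeping on the basis $\mathcal{B}(K,l)$ of $H^1(G_K, \mathbb{F}_l)$. With this dictionary, both directions are automatic from the uniqueness of the expansion in $\mathcal{B}(K,l)$.
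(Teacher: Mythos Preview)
Your proof is correct and is essentially the same construction as the paper's: the paper builds the inverse map $\Pi_l$ by evaluating $(\chi_1,\ldots,\chi_r)$ at the dual generators $\sigma_i,\sigma_{\mathfrak{q}}\in\mathcal{B}^{\vee}(K,l)$ and recording the resulting vector in $\mathbb{F}_l^V$ as the ``support'', which is identical to reading off the basis coefficients $a_{j,i}=\chi_j(\sigma_i)$ and $b_{j,\mathfrak{q}}=\chi_j(\sigma_{\mathfrak{q}})$ as you do. The verification that the two maps are mutual inverses is the same in both, resting on the fact that coprimality forces each basis element to appear in at most one $\chi_{v_g}$.
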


\begin{proof}
Assume without loss of generality that $V = [r]$. To a vector $(\chi_1, \ldots, \chi_r)$ in $H^1(G_K, \mathbb{F}_l)^r$ we attach a point
$$
\Pi_l(\chi_1, \ldots, \chi_r) := (v_g(1), v_g(2))_{g \in \mathbb{F}_l^r - \{(0, \ldots, 0)\}} 
$$
in $\text{Prim}(\mathcal{S}_l^{\mathbb{F}_l^r - \{(0, \ldots, 0)\}})$ as follows. For each $\mathfrak{q} \in \widetilde{\Omega}_K(l)$ we let $\mathfrak{q}$ divide the entry $v_g(2)$ if and only if
$$
(\chi_1(\sigma_{\mathfrak{q}}), \ldots, \chi_r(\sigma_{\mathfrak{q}})) = g.
$$
Likewise for each $j \in [t]$ we put $\pi_j(v_g(1)) = 1$ if and only if
$$
(\chi_1(\sigma_j), \ldots, \chi_r(\sigma_j)) = g.
$$
By construction $(v_g(1), v_g(2))_{g \in \mathbb{F}_l^r - \{(0, \ldots, 0)\}}$ is in $\text{Prim}(\mathcal{S}_l^{\mathbb{F}_l^r - \{(0, \ldots, 0)\}})$. Using that $\mathcal{B}^{\vee}(K, l)$ is a system of topological generators, we deduce that 
\[
\text{Pow}_l([r])((v_g(1), v_g(2))_{g \in \mathbb{F}_l^r - \{(0, \ldots, 0)\}}) = (\chi_1, \ldots, \chi_r),
\]
since the equality holds by construction when evaluated in an element of $\mathcal{B}^{\vee}(K, l)$. Conversely let $\sigma \in \mathcal{B}^{\vee}(K, l)$ and let $(v_g)_{g \in \mathbb{F}_l^r - \{(0, \ldots, 0)\}} \in \text{Prim}(\mathcal{S}_l^{\mathbb{F}_l^r - \{(0, \ldots, 0)\}})$. There exists at most one $g_0 \in \mathbb{F}_l^r - \{(0, \ldots, 0)\}$ such that $\chi_{v_{g_0}}(\sigma) \neq 0$. Suppose that such a $g_0$ exists. Then
$$
\text{Pow}_l([r])((v_g)_{g \in \mathbb{F}_l^r - \{(0, \ldots, 0)\}})(\sigma) = g_0,
$$
which implies that
\[
\Pi_l \circ \text{Pow}_l([r])((v_g)_{g \in \mathbb{F}_l^r - \{(0, \ldots, 0)\}}) = (v_g)_{g \in \mathbb{F}_l^r - \{(0, \ldots, 0)\}}.
\]
Hence $\Pi_l$ and $\text{Pow}_l([r])$ are mutual inverses, which finishes the proof of the proposition.
\end{proof}

We now have the necessary tools to generalize Proposition \ref{It is a parametrization} to general number fields and general nilpotent groups. We carry this out in the next subsection.

\subsection{The parametrization in general}
We recall the setup from Section \ref{Parametrization}. Let $r \in \Z_{\geq 1}$. A sequence of pairs
$$
\{(G_i, \theta_i)\}_{i \in [r]}
$$
is called an \emph{admissible sequence} if it satisfies the following inductive rules:
\begin{itemize}
\item $G_0$ is the trivial group by convention. Furthermore, $G_i$ is an $l$-group and $\theta_i: G_{i - 1}^2 \to \mathbb{F}_l$ is a $2$-cocycle with $\theta_i(\text{id}, \text{id}) = 0$ for each $i \in [r]$;
\item we have
\[
G_i = (\mathbb{F}_l \times G_{i - 1}, *_{\theta_i})
\]
for all $i \in [r]$;
\item $\theta_i$ is the zero map if and only if the class of $\theta$ in $H^2(G_{i - 1}, \mathbb{F}_l)$ is trivial. 
\end{itemize} 

For the remainder of this section we fix an admissible sequence $\{(G_i, \theta_i)\}_{i \in [r]}$. Set
$$
G := G_r. 
$$
The aim of this section is to construct a surjective map
$$
P_G: \text{Prim}(\mathcal{S}_l^{G - \{\text{id}\}}) \twoheadrightarrow \text{Epi}_{\text{top.gr.}}(\mathcal{G}_{K}^{\text{pro}-l}, G) \cup \{\bullet\},
$$
which restricts to a bijection between
$$
\text{Prim}(\mathcal{S}_l^{G - \{\text{id}\}})(\text{solv.}) := P_G^{-1}(\text{Epi}_{\text{top.gr.}}(\mathcal{G}_{K}^{\text{pro}-l}, G))
$$
and $\text{Epi}_{\text{top.gr.}}(\mathcal{G}_{K}^{\text{pro}-l}, G)$. Furthermore, we explain how to read the ramification data on the right hand side from the left hand side of this parametrization.

We start by defining a map
$$
\tilde{P}_G:H^1(G_K, \mathbb{F}_l)^r \to \text{Epi}_{\text{top.gr.}}(\mathcal{G}_{K}^{\text{pro}-l}, G) \cup \{\bullet\}
$$
as follows. Let $v := (\chi_1, \ldots, \chi_r)$ be an element of $H^1(G_K, \mathbb{F}_l)^{r}$. If $\chi_{1}$ is the trivial character, we declare $\tilde{P}_G(v) = \bullet$. So we assume from now on that $\chi_{1}$ is non-trivial. Equivalently,
$$
\chi_{1} \in \text{Epi}_{\text{top.gr.}}(\mathcal{G}_{K}^{\text{pro}-l}, G_1).
$$
Hence $\chi_{1}^*(\theta_2)$ is now a $2$-cocycle on $G_{K}$. If it is non-trivial, then we declare $\tilde{P}_G(v) = \bullet$. Now assume that $\chi_{1}^*(\theta_2)$ is zero in $H^2(G_{K}, \mathbb{F}_l)$; we distinguish two cases. If $\theta_2$ is already a trivial $2$-cocycle on $G_1$, we have that $\phi(G_1, \chi_{v_1}^{*}(\theta_2)) = 0$ and
$$
(\chi_2, \chi_1) \in \text{Epi}_{\text{top.gr.}}(\mathcal{G}_{K}^{\text{pro}-l}, G_2)
$$
if and only if $\chi_{1}$ and $\chi_{2}$ are linearly dependent. In case $\chi_{1}$ and $\chi_{2}$ are linearly dependent, we set $\tilde{P}_G(v) = \bullet$ and otherwise we proceed. If instead $\theta_2$ is a non-trivial $2$-cocycle on $G_1$, we always have that
$$
(\phi(G_1, \chi_{1}^{*}(\theta_2)) + \chi_{2}, \chi_{1}) \in \text{Epi}_{\text{top.gr.}}(\mathcal{G}_{K}^{\text{pro}-l}, G_2)
$$
by Proposition \ref{defining normalized phi: in general}.

Now we continue in this fashion inductively. At step $i < r$ we have either already assigned $v$ to $\bullet$, or we have obtained an epimorphism $\psi_i \in \text{Epi}_{\text{top.gr.}}(\mathcal{G}_{K}^{\text{pro}-l}, G_i)$. Then we get a $2$-cocycle $\psi_i^*(\theta_{i + 1})$, which gives a class in $H^2(G_{K}, \mathbb{F}_l)$. If this class is non-trivial in $H^2(G_{K}, \mathbb{F}_l)$, we send $v$ to $\bullet$. 

In case $\psi_i^*(\theta_{i + 1})$ is trivial in $H^2(G_K, \mathbb{F}_l)$, we distinguish two cases. If $\psi_i^*(\theta_{i + 1})$ is already trivial in $H^2(G_i, \mathbb{F}_l)$, we have that $\phi(G_i, \psi_i^*(\theta_{i + 1})) = 0$. Then
\[
(\chi_{i + 1}, \psi_i) \in \text{Epi}_{\text{top.gr.}}(\mathcal{G}_{K}^{\text{pro}-l}, G_{i + 1})
\]
if and only if $\chi_{i + 1}$ is linearly independent from the characters $\chi_{j}$ satisfying 
\[
\phi(G_{j - 1}, \psi_{j - 1}^*(\theta_j)) = 0.
\]
For a proof of this claim we refer the reader to the special case $K=\mathbb{Q}, l=2$, which we have discussed in detail in Section \ref{Parametrization}: the argument goes through without changes. 

If $\chi_{i + 1}$ is linearly dependent on these characters $\chi_j$, we send $v$ to $\bullet$. Otherwise we go to step $i + 1$.

Now suppose that $\theta_{i + 1}$ is a non-trivial class of $H^2(G_i, \mathbb{F}_l)$. Then we always obtain by means of Proposition \ref{defining normalized phi: in general} a new epimorphism
$$
(\phi(G_i, \psi_i^*(\theta_{i + 1})) + \chi_{i + 1}, \psi_i) \in \text{Epi}_{\text{top.gr.}}(\mathcal{G}_{K}^{\text{pro}-l}, G_{i + 1})
$$
and we go to step $i + 1$. Continuing in this fashion we obtain either $\bullet$ or an element of 
$$
\text{Epi}_{\text{top.gr.}}(\mathcal{G}_{K}^{\text{pro}-l}, G),
$$
which is by definition $\tilde{P}_G(v)$. We put
$$
P_G := \tilde{P}_G \circ \text{Pow}_l([r]). 
$$
We remark that $\phi(G_i, \theta)$ is only defined in case $G_i$ is a Galois group. Fortunately, this small abuse of notation does not present any issues. Indeed, the epimorphism $\psi_i$ realizes the implicit identification between $G_i$ and the corresponding Galois group. 

We additionally remark that in the construction of the map $P_G$ we have implicitly used that $G$ is set-theoretically defined to be $\mathbb{F}_l^r$ with the identity element being $(0, \ldots, 0)$, which is a consequence of our convention that $2$-cocycles vanish on $(\text{id}, \text{id})$. Hence it makes sense to invoke the map $\text{Pow}_l([r])$. 

\begin{proposition} 
\label{It is a parametrization: in general}
The map 
$$
P_G: \textup{Prim}(\mathcal{S}_l^{G - \{\textup{id}\}}) \twoheadrightarrow \textup{Epi}_{\textup{top.gr.}}(\mathcal{G}_{K}^{\textup{pro}-l}, G) \cup \{\bullet\},
$$
is a surjection, which restricts to a bijection between $\textup{Prim}(\mathcal{S}_l^{G - \{\textup{id}\}})(\textup{solv.})$ and surjective homomorphisms $\textup{Epi}_{\textup{top.gr.}}(\mathcal{G}_{K}^{\textup{pro}-l}, G)$.
\end{proposition}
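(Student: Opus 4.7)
The plan is to mirror the proof of Proposition \ref{It is a parametrization} essentially verbatim, with Proposition \ref{defining normalized phi: in general} playing the role of Proposition \ref{defining normalized phi} and with Proposition \ref{it is a bijection} handling the identification between tuples of pairs in $\mathcal{S}_l$ and tuples of characters in $H^1(G_K, \mathbb{F}_l)$. No genuinely new arguments are required: the substantive work has been absorbed into these two preceding results and into the construction of $P_G$ itself. By construction the image of $\tilde P_G$ is contained in $\textup{Epi}_{\textup{top.gr.}}(\mathcal{G}_K^{\textup{pro}-l}, G) \cup \{\bullet\}$, since at each step of the inductive definition either the input is declared to go to $\bullet$ or Proposition \ref{defining normalized phi: in general}(a) produces an honest epimorphism $\psi_i$ onto $G_i$. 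Surjectivity onto the codomain is then immediate: the fiber over $\bullet$ is non-empty (take for example a vector whose first coordinate corresponds to the trivial character), and the paragraph below verifies that every epimorphism is hit.

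For the bijection statement I would construct the inverse explicitly. Given $\psi \in \textup{Epi}_{\textup{top.gr.}}(\mathcal{G}_K^{\textup{pro}-l}, G)$, project $\psi$ down the admissible sequence to obtain a tower of epimorphisms $\psi_i \in \textup{Epi}_{\textup{top.gr.}}(\mathcal{G}_K^{\textup{pro}-l}, G_i)$ for $i = 1, \dots, r$. At each stage $\psi_{i+1}$ is a lift of $\psi_i$ along the central extension $G_{i+1} = (\mathbb{F}_l \times G_i, \ast_{\theta_{i+1}}) \twoheadrightarrow G_i$, and Proposition \ref{defining normalized phi: in general}(b) states that the $\mathbb{F}_l$-coordinate of $\psi_{i+1}$ can be written uniquely as $\phi(G_i, \psi_i^\ast(\theta_{i+1})) + \chi_{i+1}$ for a unique character $\chi_{i+1} \in H^1(G_K, \mathbb{F}_l)$. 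Assembling $(\chi_1, \dots, \chi_r)$ and applying the inverse of $\textup{Pow}_l([r])$ from Proposition \ref{it is a bijection} yields an element of $\textup{Prim}(\mathcal{S}_l^{G - \{\textup{id}\}})$, which by definition lies in $\textup{Prim}(\mathcal{S}_l^{G - \{\textup{id}\}})(\textup{solv.})$ since it maps to $\psi$ under $P_G$.

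That the two constructions are mutually inverse follows from the uniqueness clause in Proposition \ref{defining normalized phi: in general}(b): at each step of $P_G$ the new $\mathbb{F}_l$-coordinate is built with exactly the normalization that the inverse extraction reads off, so round-tripping in either direction recovers the input. The one bookkeeping point that needs checking is that at each step the epimorphism $\psi_i$ used to pull back $\theta_{i+1}$ coincides with the projection of the epimorphism produced at step $i+1$; this is automatic by induction on $i$ because the construction simply appends $\chi_{i+1}$ (or declares $\bullet$) without altering the first $i$ coordinates. There is no real obstacle here --- the proposition is a formal bookkeeping corollary of Proposition \ref{defining normalized phi: in general} and Proposition \ref{it is a bijection}, exactly as in the $K = \mathbb{Q}$, $l = 2$ case.
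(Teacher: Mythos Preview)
Your proposal is correct and follows exactly the approach the paper takes: the paper's proof is the single sentence ``This follows upon combining Proposition \ref{defining normalized phi: in general} and Proposition \ref{it is a bijection},'' and you have simply unpacked that sentence, constructing the inverse via the uniqueness in Proposition \ref{defining normalized phi: in general}(b) and the bijection $\textup{Pow}_l([r])$. No differences worth remarking on.
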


\begin{proof}
This follows upon combining Proposition \ref{defining normalized phi: in general} and Proposition \ref{it is a bijection}.
\end{proof}

The parametrization $P_G$ allows us to read off very neatly the image of the elements $\{\sigma_{\mathfrak{q}} \}_{\mathfrak{q} \in \widetilde{\Omega}_K(l)}$ from the tuples of squarefree ideals. 

\begin{proposition} 
\label{Reading off inertia: in general}
Let $(v_g(1), v_g(2))_{g \in G - \{\textup{id}\}}$ be an element of $\textup{Prim}(\mathcal{S}_l^{G - \{\textup{id}\}})(\textup{solv.})$. Let $\mathfrak{q} \in \Omega_K - S_{\textup{clean}}(l)$. If $\mathfrak{q} \mid v_{g_0}(2)$ for a (necessarily) unique $g_0 \in G - \{\textup{id}\}$ then
$$
P_G((v_g(1), v_g(2))_{g \in G - \{\textup{id}\}})(\sigma_{\mathfrak{q}}) = g_0.
$$
If $\mathfrak{q}$ does not divide any of the elements of the vector $(v_g(2))_{g \in G - \{\textup{id}\}}$, then 
$$
P_G((v_g(1), v_g(2))_{g \in G - \{\textup{id}\}})(\sigma_{\mathfrak{q}}) = \textup{id}.
$$
\end{proposition}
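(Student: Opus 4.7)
The plan is to mimic the proof of Proposition \ref{Reading off inertia} from the special case $K = \mathbb{Q}$, $l = 2$ and grind through the inductive construction of $P_G$, at each step exploiting the key normalization in Proposition \ref{defining normalized phi: in general}(b).

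First, write $(\chi_1, \dots, \chi_r) := \textup{Pow}_l([r])((v_g(1), v_g(2))_{g \in G - \{\textup{id}\}})$, so that explicitly
\[
\chi_i = \sum_{g \in G - \{\textup{id}\}} \pi_i(g) \cdot \chi_{v_g}.
\]
Next I would unwind $\chi_i(\sigma_\mathfrak{q})$. For any $(T, \mathfrak{b}) \in \mathcal{S}_l$, the duality between $\mathcal{B}(K, l)$ and $\mathcal{B}^{\vee}(K, l)$ gives
\[
\chi_{(T, \mathfrak{b})}(\sigma_\mathfrak{q}) = \sum_{j \in [t], \pi_j(T) = 1} \chi_j(\sigma_\mathfrak{q}) + \sum_{\mathfrak{q}' \mid \mathfrak{b}} \chi_{\mathfrak{q}'}(\sigma_\mathfrak{q}),
\]
which vanishes if $\mathfrak{q} \in \Omega_K - S_{\textup{clean}}(l) - \widetilde{\Omega}_K(l)$ (where $\sigma_\mathfrak{q} = \textup{id}$ by definition), and otherwise equals $1$ if $\mathfrak{q} \mid \mathfrak{b}$ and $0$ if not. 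Applying this to each $v_g = (v_g(1), v_g(2))$ and using that the $v_g$ are pairwise coprime in $\mathcal{S}_l$, at most one $g$ can satisfy $\chi_{v_g}(\sigma_\mathfrak{q}) \neq 0$, namely the unique $g_0$ (if it exists) with $\mathfrak{q} \mid v_{g_0}(2)$.

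Now I track $\sigma_\mathfrak{q}$ through the inductive construction of $P_G$. At step $i$, the $\mathbb{F}_l$-coordinate of $\psi_i(\sigma_\mathfrak{q})$ equals
\[
\chi_i(\sigma_\mathfrak{q}) + \phi\bigl(G_{i - 1}, \psi_{i - 1}^{*}(\theta_i)\bigr)(\sigma_\mathfrak{q}).
\]
By Proposition \ref{defining normalized phi: in general}(b), the second summand vanishes on every element of $\mathcal{B}^{\vee}(K, l)$ and in particular on $\sigma_\mathfrak{q}$, so only $\chi_i(\sigma_\mathfrak{q})$ contributes. If a unique $g_0$ with $\mathfrak{q} \mid v_{g_0}(2)$ exists, the previous step yields $\chi_i(\sigma_\mathfrak{q}) = \pi_i(g_0)$, and hence
\[
P_G\bigl((v_g(1), v_g(2))_{g \in G - \{\textup{id}\}}\bigr)(\sigma_\mathfrak{q}) = (\pi_1(g_0), \dots, \pi_r(g_0)) = g_0,
\]
using the set-theoretic identification $G = \mathbb{F}_l^r$; otherwise every $\chi_i(\sigma_\mathfrak{q})$ vanishes and the image is $(0, \dots, 0) = \textup{id}$.

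There is no real obstacle: the argument is essentially bookkeeping. The one point demanding care is that the normalization from Proposition \ref{defining normalized phi: in general}(b) is exactly what makes the $\phi$-terms disappear at each inductive step; any other choice of lifting $\phi$ would leave correction terms that could alter the $\mathbb{F}_l$-coordinate of $\psi_i(\sigma_\mathfrak{q})$. This is precisely why $\mathcal{B}^{\vee}(K, l)$ was designed to be dual to $\mathcal{B}(K, l)$.
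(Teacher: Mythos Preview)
Your proof is correct and follows essentially the same approach as the paper. The paper's own argument is more compressed: for $\mathfrak{q} \in \widetilde{\Omega}_K(l)$ it simply invokes that the map $\Pi_l$ constructed in the proof of Proposition~\ref{it is a bijection} is inverse to $\textup{Pow}_l([r])$, and for $\mathfrak{q} \in \Omega_K - S_{\textup{clean}}(l) - \widetilde{\Omega}_K(l)$ it notes that $\sigma_\mathfrak{q} = \textup{id}$ and that such $\mathfrak{q}$ cannot divide any $v_g(2)$ by construction of $\mathcal{S}_l$. Your version instead unwinds the inductive construction step by step (closer in spirit to the paper's treatment of the special case in Proposition~\ref{Reading off inertia}), making explicit the role of the normalization from Proposition~\ref{defining normalized phi: in general}(b) in killing the $\phi$-terms; the underlying content is the same.
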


\begin{proof}
For the case $\mathfrak{q} \in \widetilde{\Omega}_K(l)$ the conclusion follows immediately from the fact that the map $\Pi_l$ in the proof of Proposition \ref{it is a bijection} is inverse to the map $\text{Pow}_l([r])$. Otherwise, we have that $\mathfrak{q} \in \Omega_K - S_{\text{clean}}(l) - \widetilde{\Omega}_K(l)$ so that $\sigma_{\mathfrak{q}} = \text{id}$ by definition. By construction of $\mathcal{S}_l$ it follows that such $\mathfrak{q}$ do not divide any $v_g(2)$. Hence we are always in the second case of the proposition. Therefore the statement also holds for such $\mathfrak{q}$.
\end{proof}

We next read off the value of the discriminant under the bijection $P_G$. For any continuous homomorphism $\psi$ of $G_{K}$ with values in some finite group, we denote by $\text{Disc}(\psi)$ the relative discriminant (which is an ideal of $\mathcal{O}_K$) of the corresponding extension. For a non-zero integral ideal $\mathfrak{b}$ in $\mathcal{O}_K$ we write $\text{free}_S(\mathfrak{b})$ for the largest ideal dividing $\mathfrak{b}$ and entirely supported outside of $S$.

\begin{proposition} 
\label{reading off discriminants: in general}
Let $(v_g(1), v_g(2))_{g \in G - \{\textup{id}\}}$ be an element of $\textup{Prim}(\mathcal{S}_l^{G - \{\textup{id}\}})(\textup{solv.})$. Then
$$
\textup{free}_{S_{\textup{clean}}(l)}(\textup{Disc}(P_G((v_g)_{g \in G - \{\textup{id}\}}))) = \prod_{g \in G - \{\textup{id}\}} v_g(2)^{\#G \cdot (1-\frac{1}{\#\langle g \rangle})}.
$$
\end{proposition}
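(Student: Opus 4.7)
The plan is to reduce the statement to a place-by-place comparison of $\mathfrak{q}$-adic valuations of both sides, ranging over every finite place $\mathfrak{q} \in \Omega_K - S_{\textup{clean}}(l)$. The right-hand side is transparent in this regard: since every $v_g(2)$ is by definition an ideal of $\mathcal{O}_K$ supported in $\widetilde{\Omega}_K(l)$, and since the vectors are pairwise coprime, the $\mathfrak{q}$-adic valuation of the product is either zero or else contributed by exactly one factor. The left-hand side will be handled by exploiting the crucial fact that $S_{\textup{clean}}(l)$ was chosen to contain all places above $l$, so every $\mathfrak{q} \in \Omega_K - S_{\textup{clean}}(l)$ is coprime to $l$ and hence tamely ramified (if ramified at all) in the extension cut out by $P_G((v_g)_g)$.

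I would split the analysis into two cases according to whether $\mathfrak{q}$ lies in $\widetilde{\Omega}_K(l)$. When $\mathfrak{q} \in \Omega_K - S_{\textup{clean}}(l) - \widetilde{\Omega}_K(l)$, Proposition \ref{only with local roots of unity} tells us that $\mathfrak{q}$ is unramified in any finite subextension of $K^{\textup{pro}-l}/K$, so the $\mathfrak{q}$-adic valuation of the discriminant vanishes; simultaneously, because each $v_g(2)$ is supported in $\widetilde{\Omega}_K(l)$ by the very definition of $\mathcal{S}_l$, the right-hand side also has vanishing $\mathfrak{q}$-adic valuation, so both sides agree trivially.

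The substantive case is $\mathfrak{q} \in \widetilde{\Omega}_K(l)$. Writing $g_{\mathfrak{q}} := P_G((v_g)_g)(\sigma_{\mathfrak{q}})$, tameness yields the familiar conductor-discriminant formula
$$
v_{\mathfrak{q}}(\textup{Disc}(P_G((v_g)_g))) = \frac{\#G}{\#\langle g_{\mathfrak{q}} \rangle} \cdot \big(\#\langle g_{\mathfrak{q}} \rangle - 1\big) = \#G \cdot \Big(1 - \frac{1}{\#\langle g_{\mathfrak{q}} \rangle}\Big).
$$
Proposition \ref{Reading off inertia: in general} then pins down $g_{\mathfrak{q}}$: if $\mathfrak{q}$ divides a (necessarily unique) $v_{g_0}(2)$ then $g_{\mathfrak{q}} = g_0$, whereas if $\mathfrak{q}$ divides none of the $v_g(2)$ then $g_{\mathfrak{q}} = \textup{id}$ and both sides contribute zero. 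Matching the resulting exponent to $\#G(1 - 1/\#\langle g_0 \rangle)$ yields the claimed equality of ideals.

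I do not anticipate any substantive obstacle: the argument is essentially a verbatim generalization of the proof of Proposition \ref{reading off discriminants}, with $\widetilde{\Omega}_K(l)$ playing the role previously played by the odd rational primes, and Proposition \ref{only with local roots of unity} handling the places where the maximal pro-$l$ inertia is forced to be trivial. The only point requiring a moment of care is that the exclusion of $S_{\textup{clean}}(l)$ on the left-hand side matches exactly the support constraint built into $\mathcal{S}_l$ on the right-hand side, which is precisely what makes the equality hold as stated.
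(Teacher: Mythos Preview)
Your proposal is correct and follows essentially the same approach as the paper's proof: both reduce to checking $\mathfrak{q}$-adic valuations for $\mathfrak{q} \in \Omega_K - S_{\textup{clean}}(l)$, dispose of $\mathfrak{q} \notin \widetilde{\Omega}_K(l)$ via Proposition~\ref{only with local roots of unity}, and for $\mathfrak{q} \in \widetilde{\Omega}_K(l)$ combine the tame discriminant formula with Proposition~\ref{Reading off inertia: in general}. Your write-up is, if anything, slightly more explicit about why the right-hand side has the claimed support.
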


\begin{proof}
We show that the $\mathfrak{q}$-adic valuation matches for any prime $\mathfrak{q}$ of $\mathcal{O}_K$. This is certainly true for the places $\mathfrak{q}$ in $S_{\text{clean}(l)}$, but also for the places $\mathfrak{q}$ outside $\widetilde{\Omega}_K(l)$ by Proposition \ref{only with local roots of unity}. Now take a place $\mathfrak{q}$ in $\widetilde{\Omega}_K(l)$. Since $\mathfrak{q}$ is coprime to $l$ we know that  
$$
v_{\mathfrak{q}}(\textup{Disc}(P_G((v_g)_{g \in G - \{\textup{id}\}}))) = \frac{\#G}{\# \langle P_G((v_g)_{g \in G - \{\textup{id}\}})(\sigma_{\mathfrak{q}}) \rangle} \cdot \left(\# \langle P_G((v_g)_{g \in G - \{\textup{id}\}})(\sigma_{\mathfrak{q}}) \rangle - 1\right).
$$
Thanks to Proposition \ref{Reading off inertia: in general} we deduce that $P_G((v_g)_{g \in G - \{\textup{id}\}})(\sigma_{\mathfrak{q}})$ is trivial in case $\mathfrak{q}$ does not divide any $v_g$ and equals $g_0$ in case $\mathfrak{q}$ divides $v_{g_0}$. This is precisely the desired conclusion. 
\end{proof}

Our final goal for this subsection is to generalize Propositions \ref{It is a parametrization: in general}, \ref{Reading off inertia: in general} and \ref{reading off discriminants: in general} to arbitrary finite, nilpotent groups. Recall that a finite group $G$ is nilpotent if and only if it decomposes as a direct product of its Sylow subgroups. Let $c$ be a positive integer. Let $l_1, \ldots, l_c$ be distinct prime numbers. For each $j \in [c]$ fix an admissible sequence
$$
\{(G_i(l_j), \theta_i(l_j))\}_{i \in [r_j]}
$$
and write $G(l_j) := G_{r_j}(l_j)$ for the resulting $l_j$-group. We put 
$$
G := \prod_{j = 1}^c G(l_j).
$$
To parametrize $G$-extensions, we reduce to $l$-groups by means of the following proposition.

\begin{proposition} 
\label{epi iff cordinate are epi}
We have an identification  
$$
\textup{Epi}_{\textup{top.gr.}}(G_K, G) = \prod_{j \in [c]} \textup{Epi}_{\textup{top.gr.}}(\mathcal{G}_K^{\textup{pro}-l_j}, G(l_j))
$$
through the natural map.
\end{proposition}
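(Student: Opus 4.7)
The plan is to establish the identification in two moves: first matching the continuous homomorphisms on both sides, and then matching the surjectivity conditions.

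First, the universal property of the direct product gives a canonical bijection
$$
\text{Hom}_{\text{top.gr.}}(G_K, G) \cong \prod_{j \in [c]} \text{Hom}_{\text{top.gr.}}(G_K, G(l_j)), \qquad f \mapsto (\pi_j \circ f)_{j \in [c]},
$$
where $\pi_j$ is the projection onto the $j$-th factor. I would then note that each continuous $f_j: G_K \to G(l_j)$ factors uniquely through the projection $G_K \twoheadrightarrow \mathcal{G}_K^{\text{pro}-l_j}$. Indeed, $G(l_j)$ is a finite $l_j$-group, so the image of $f_j$ is a finite $l_j$-group, and the corresponding fixed field is a finite Galois extension of $K$ of $l_j$-power degree; this field lies in $K^{\text{pro}-l_j}$, which is exactly what factorization through $\mathcal{G}_K^{\text{pro}-l_j}$ means. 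Composing the two bijections produces the identification at the level of continuous homomorphisms.

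Next I would match surjectivity. One direction is trivial: if $f: G_K \to G$ is surjective then every composition $\pi_j \circ f$ is surjective. For the converse, suppose that each $f_j: \mathcal{G}_K^{\text{pro}-l_j} \to G(l_j)$ is an epimorphism, assemble them into $f: G_K \to G$, and let $H := f(G_K) \subseteq G$. Then $\pi_j(H) = G(l_j)$ for every $j$, so $\# G(l_j) \mid \# H$ for each $j$. Because the $\# G(l_j)$ are powers of distinct primes they are pairwise coprime, hence $\prod_j \# G(l_j) \mid \# H$, which forces $H = G$. This is just the standard Goursat-style observation that a subgroup of a product of pairwise-coprime-order groups surjecting on each factor must be the whole product.

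Assembling the two previous paragraphs gives the identification in the statement. I do not foresee a real obstacle; the only step requiring a genuine (if elementary) argument is the surjectivity criterion, and that reduces instantly to the coprimality of the orders $\# G(l_j)$.
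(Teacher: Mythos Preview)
Your proof is correct and follows essentially the same approach as the paper: the key surjectivity argument via coprimality of the orders $\#G(l_j)$ is identical. You are simply more thorough in spelling out the Hom-level bijection and the factorization through $\mathcal{G}_K^{\textup{pro}-l_j}$, which the paper leaves implicit.
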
 

\begin{proof}
Let $H$ be any group and $\psi = (\psi_j)_{j \in [c]}:H \to G$ be any group homomorphism. We have to show that if $\psi_j$ is surjective for each $j \in [c]$, then $\psi$ is surjective. Observe that if each $\psi_j$ is surjective, then $\#\text{Im}(\psi)$ is divisible by $\#G(l_j)$ for each $j \in [c]$. Since these values are coprime, we find out that $\#\text{Im}(\psi)$ is divisible by $\#G$, which means precisely that $\psi$ is surjective.
\end{proof}

Thanks to Proposition \ref{epi iff cordinate are epi} we can now bundle together the various maps $P_{G(l_j)}$ into one map
$$
P_G: \prod_{j \in [c]} \text{Prim}(\mathcal{S}_{l_j}^{G(l_j) - \{\text{id}\}}) \twoheadrightarrow \text{Epi}_{\text{top.gr.}}(G_K, G) \cup \{\bullet\}
$$
by simply taking the product map. Put
\[
S := \bigcup_{j \in [c]} S_{\textup{clean}}(l_j).
\]
For every $j \in [c]$, let $\{\chi_{j, i}\}_{i \in [t_j]}$ be a basis for the space of characters $G_K \rightarrow \FF_{l_j}$ only ramified at $S$. Define $\mathcal{S}$ to be $\{0, 1\}^{[t_1 + \dots + t_c]} \times \mathcal{S}'$, where $\mathcal{S}'$ is the set of squarefree ideals supported outside $S$. Let
\[
\text{Prim}(\mathcal{S}^{G - \{\text{id}\}})
\]
be the set of tuples $(v_g(1), v_g(2))_{g \in G - \{\text{id}\}}$ satisfying the following properties
\begin{itemize}
\item writing $\pi_i$ for the natural projection map $[t_1 + \dots + t_c] \rightarrow [t_i]$, we have that the $\pi_i(v_g(1))$ are pairwise coprime;
\item the $v_g(2)$ are pairwise coprime;
\item if $\mathfrak{p}$ divides $v_g(2)$ and $l$ is a prime dividing the order of $g$, then
\[
\# (\mathcal{O}_K/\mathfrak{p}) \equiv 1 \bmod l.
\]
\end{itemize}
For an element 
\[
(v_{g, j}(1), v_{g, j}(2))_{j \in [c], g \in G(l_j) - \{\textup{id}\}} \in \prod_{j \in [c]} \text{Prim}(\mathcal{S}_{l_j}^{G(l_j) - \{\text{id}\}})
\]
and for $g := (g_1, \ldots, g_c) \in G - \{\text{id}\}$ we define
$$
v_g(1) := (v_{g_1, 1}(1), \dots, v_{g_c, c}(1)), \quad v_g(2) := \prod_{\substack{\mathfrak{p} \\ \forall j \in [c] \forall h \in G(l_j) - \{\text{id}\} : \mathfrak{p} \mid v_{h, j}(2) \Leftrightarrow g_j = h}} \mathfrak{p}.
$$
In this way we have created a very convenient bijection between
\[
\text{Prim}(\mathcal{S}^{G - \{\text{id}\}}) \cong \prod_{j \in [c]} \text{Prim}(\mathcal{S}_{l_j}^{G(l_j) - \{\text{id}\}}).
\]
We need one additional piece of notation, namely we define
$$
\textup{Prim}(\mathcal{S}^{G - \{\textup{id}\}})(\textup{solv.}) := \prod_{j \in [c]} \text{Prim}(\mathcal{S}_{l_j}^{G(l_j) - \{\text{id}\}})(\text{solv.}),
$$
which we shall often implicitly view as a subset of $\text{Prim}(\mathcal{S}^{G - \{\text{id}\}})$. The next proposition generalizes Proposition \ref{Reading off inertia: in general}. 

\begin{proposition} 
\label{Reading off inertia: in general plus}
Let $v := (v_{g, j}(1), v_{g, j}(2))_{j \in [c], g \in G(l_j) - \{\textup{id}\}}$ be an element of 
\[
\textup{Prim}(\mathcal{S}^{G - \{\textup{id}\}})(\textup{solv.}).
\]
Take some $\mathfrak{q} \in \Omega_K - S$. Let $T$ be the subset of $[c]$ such that $j \in T$ if and only if there exists a (necessarily) unique $g_0^j \in G(l_j) - \{\textup{id}\}$ with $\mathfrak{q} \mid v_{g_0^j, j}(2)$. Then we have
$$
P_G(v)(\sigma_{\mathfrak{q}}) = (g_0^j)_{j \in T} \times (\textup{id})_{k \in [c] - T}.
$$
In particular if $\mathfrak{q}$ does not divide any of the elements $v_{g, j}(2)$, i.e. $T=\emptyset$, then 
$$
P_G(v)(\sigma_{\mathfrak{q}}) = \textup{id}.
$$
\end{proposition}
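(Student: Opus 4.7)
The plan is to deduce this from Proposition \ref{Reading off inertia: in general} applied separately in each Sylow factor. By construction, $P_G$ is defined as the product $\prod_{j \in [c]} P_{G(l_j)}$ after identifying $\textup{Epi}_{\textup{top.gr.}}(G_K, G)$ with $\prod_{j \in [c]} \textup{Epi}_{\textup{top.gr.}}(\mathcal{G}_K^{\textup{pro}-l_j}, G(l_j))$ via Proposition \ref{epi iff cordinate are epi}. Under the bijection between $\textup{Prim}(\mathcal{S}^{G - \{\textup{id}\}})$ and $\prod_{j \in [c]} \textup{Prim}(\mathcal{S}_{l_j}^{G(l_j) - \{\textup{id}\}})$ that is spelled out just before the statement of the proposition, the input $v$ corresponds to the tuple of Sylow-component data $v_j := (v_{g, j}(1), v_{g, j}(2))_{g \in G(l_j) - \{\textup{id}\}}$. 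It therefore suffices to compute the $j$-th coordinate of $P_G(v)(\sigma_{\mathfrak{q}})$ for each $j \in [c]$ independently, interpreting $\sigma_{\mathfrak{q}}$ in the $j$-th slot as the topological generator of $I_{\mathfrak{q}}(l_j)$ fixed in Section \ref{sMain}.

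Next, I would fix $j \in [c]$ and invoke Proposition \ref{Reading off inertia: in general} for the pair $(v_j, \mathfrak{q})$. The hypothesis $\mathfrak{q} \in \Omega_K - S$ combined with $S \supseteq S_{\textup{clean}}(l_j)$ ensures that this is permissible. By the definition of the bijection above, a prime $\mathfrak{q}$ outside $S$ divides $v_{g, j}(2)$ precisely when there exists a necessarily unique $g_0 \in G - \{\textup{id}\}$ with $\mathfrak{q} \mid v_{g_0}(2)$ whose $j$-th component equals $g$. Consequently, the dichotomy in Proposition \ref{Reading off inertia: in general} applied to the $j$-th Sylow factor corresponds exactly to the dichotomy $j \in T$ versus $j \notin T$ in the present statement, and the resulting image in $G(l_j)$ equals $g_0^j$ in the former case and $\textup{id}$ in the latter.

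Finally, assembling the $c$ coordinatewise computations yields the stated formula. The only real point requiring attention is the bookkeeping that the Sylow-product decomposition $G = \prod_j G(l_j)$ is compatible with the way the prime-to-$S$ divisors of the unified tuple $(v_g(2))_{g \in G - \{\textup{id}\}}$ distribute among the ideals $v_{g, j}(2)$. This compatibility is built into the very definition of the bijection $\textup{Prim}(\mathcal{S}^{G - \{\textup{id}\}}) \cong \prod_j \textup{Prim}(\mathcal{S}_{l_j}^{G(l_j) - \{\textup{id}\}})$, so there is no substantial obstacle beyond this translation between the global and Sylow-component indexings.
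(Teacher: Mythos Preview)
Your proposal is correct and follows essentially the same approach as the paper: the paper's one-line proof simply says the result follows by applying the $l$-group case to each $G(l_j)$-factor, which is precisely what you do by invoking Proposition \ref{Reading off inertia: in general} coordinatewise and assembling via the product decomposition. Your write-up is in fact more careful than the paper's, since you make explicit the verification that $\mathfrak{q} \in \Omega_K - S$ implies $\mathfrak{q} \in \Omega_K - S_{\textup{clean}}(l_j)$ and you spell out the compatibility of the prime-support bookkeeping under the bijection $\textup{Prim}(\mathcal{S}^{G - \{\textup{id}\}}) \cong \prod_j \textup{Prim}(\mathcal{S}_{l_j}^{G(l_j) - \{\textup{id}\}})$.
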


\begin{proof}
This follows at once from Proposition \ref{It is a parametrization: in general}, applied to each $G(l_j)$-factor. 
\end{proof}

The next proposition generalizes Proposition \ref{reading off discriminants: in general}. 
In the new coordinates we have a rather simple formula for the discriminant.

\begin{proposition} 
\label{reading off discriminants: in general plus}
Notations as above. Let $(v_{g, j}(1), v_{g, j}(2))_{j \in [c], g \in G(l_j) - \{\textup{id}\}}$ be an element of $\textup{Prim}(\mathcal{S}^{G - \{\textup{id}\}})(\textup{solv.})$. Then
$$
\textup{free}_S(\textup{Disc}(P_G((v_{g, j}(1), v_{g, j}(2))_{j \in [c], g \in G(l_j) - \{\textup{id}\}}))) = \textup{free} _S \left( \prod_{g \in G - \{\textup{id}\}} v_g(2)^{\#G \cdot (1-\frac{1}{\#\langle g \rangle})} \right).
$$
\end{proposition}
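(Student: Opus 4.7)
The plan is to reduce to the prime-power case already handled by Proposition \ref{reading off discriminants: in general}. Write $v = (v_{g,j}(1), v_{g,j}(2))_{j \in [c], g \in G(l_j) - \{\textup{id}\}}$ and let $L/K$ (resp.\ $L_j/K$) be the extension corresponding to $P_G(v)$ (resp.\ to $P_{G(l_j)}$ applied to the $j$-th component of $v$). By Proposition \ref{epi iff cordinate are epi}, $L$ is the compositum $L_1 \cdots L_c$ inside $\overline{K}$, and the isomorphism $\text{Gal}(L/K) \cong \prod_j G(l_j)$ comes from restriction to each $L_j$. The key observation is that since the Galois groups $G(l_j)$ have pairwise coprime orders, any common Galois subextension would be trivial; this ensures that we really do have a product decomposition of the Galois group.

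Since we only care about the part of the discriminant supported outside $S \supseteq \bigcup_j S_{\textup{clean}}(l_j)$ and each $S_{\textup{clean}}(l_j)$ contains the primes above $l_j$, every prime $\mathfrak{q} \in \Omega_K - S$ is tame in each $L_j/K$ and hence in $L/K$. It therefore suffices to show that for every such $\mathfrak{q}$ the $\mathfrak{q}$-adic valuations of both sides agree. The next step is to compute the inertia subgroup at $\mathfrak{q}$ of the compositum: the maximal subextension of $L/K$ unramified at $\mathfrak{q}$ is the compositum of the corresponding maximal unramified subextensions of the individual $L_j/K$, so the inertia at $\mathfrak{q}$ in $\text{Gal}(L/K) \cong \prod_j G(l_j)$ equals the product $\prod_j I_\mathfrak{q}(L_j/K)$ of the inertia subgroups in each factor. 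Since these factors have coprime orders, this product is cyclic and its order is the product of the orders of its components.

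Now apply Proposition \ref{Reading off inertia: in general plus} with $T \subseteq [c]$ the subset indexing the coordinates where $\mathfrak{q}$ shows up: the inertia at $\mathfrak{q}$ is generated by $g_0 := (g_0^j)_{j \in T} \times (\textup{id})_{k \in [c] - T}$ and, by the coprimality of the $\#\langle g_0^j \rangle$ across $j \in T$, one has $\# \langle g_0 \rangle = \prod_{j \in T} \# \langle g_0^j \rangle$. Tameness then gives
\[
v_\mathfrak{q}(\textup{Disc}(L/K)) = \frac{\#G}{\# \langle g_0 \rangle}\bigl(\# \langle g_0 \rangle - 1\bigr) = \#G \Bigl(1 - \frac{1}{\# \langle g_0 \rangle}\Bigr).
\]
On the other side, the pairwise coprimality of the ideals $v_g(2)$ (built into the definition of $\textup{Prim}(\mathcal{S}^{G - \{\textup{id}\}})$) implies that $\mathfrak{q}$ divides $v_g(2)$ for exactly one $g \in G - \{\textup{id}\}$, namely $g = g_0$ by the very construction of $v_g(2)$ from the $v_{g,j}(2)$; and $v_\mathfrak{q}(v_{g_0}(2)) = 1$. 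Hence the $\mathfrak{q}$-adic valuation of the right-hand side of the stated identity equals $\#G(1 - 1/\#\langle g_0 \rangle)$ as well, agreeing on the nose. The case $T = \emptyset$ (giving $g_0 = \textup{id}$ and contribution $0$ on both sides) is also covered.

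The main thing to pin down carefully is the compositum-of-inertia step: one needs to be sure that the inertia of $L/K$ at $\mathfrak{q}$ really is the full product $\prod_j I_\mathfrak{q}(L_j/K)$, which relies on the coprime-order disjointness argument above. Everything else is routine bookkeeping using Propositions \ref{epi iff cordinate are epi}, \ref{Reading off inertia: in general plus} and tame ramification.
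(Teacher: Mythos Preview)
Your argument is correct and follows essentially the same route as the paper: compute the $\mathfrak{q}$-adic valuation for $\mathfrak{q} \notin S$ via the tame discriminant formula, identify the inertia image using Proposition~\ref{Reading off inertia: in general plus}, and match with the exponent on the right. The paper's proof is the one-line observation that this is exactly the argument of Proposition~\ref{reading off discriminants: in general} with Proposition~\ref{Reading off inertia: in general plus} in place of Proposition~\ref{Reading off inertia: in general}; your write-up simply unpacks that reference. Your ``compositum-of-inertia'' worry is a non-issue once you remember that $P_G$ is defined as the product map $\prod_j P_{G(l_j)}$, so the image of $I_\mathfrak{q}$ in $G = \prod_j G(l_j)$ is automatically the product of the images in each factor---no separate field-theoretic argument is needed.
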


\begin{proof}
This follows from Proposition \ref{Reading off inertia: in general plus} with exactly the same argument as used to establish Proposition \ref{reading off discriminants: in general} as a consequence of Proposition \ref{Reading off inertia: in general}. 
\end{proof}

\section{Local conditions and conjugacy classes}
\label{sLocal conditions}

\subsection{Some group theory}
\label{ssGroup}
Let $l$ be a prime number and let $G$ be a finite $l$-group given by an admissible sequence $\{(G_i, \theta_i)\}_{i \in [r]}$ with $G := G_r$. Our first goal is to study the formation of a conjugacy class in $G$, through the various groups $G_i$, with $i \in [r]$. For $g \in G$ we denote by $\text{Conj}_G(g)$ its conjugacy class. For each $0 \leq i \leq r$ we write
$$
\pi_i: G \to G_i
$$
for the natural projection map. 

For now we take any finite $l$-group $H$, a $2$-cocycle $\theta$ representing a class in $H^2(H, \mathbb{F}_l)$, with $\theta(\text{id}, \text{id}) = 0$, and an element $h \in H$. Denote the centralizer of $h$ by $\text{Cent}_H(h)$. Then lifting elements of $\text{Cent}_H(h)$ to $(\mathbb{F}_l \times H, *_{\theta})$ and taking the commutator with any lift $h_0$ of $h$ induces a homomorphism
$$
[-, h_0]_\theta: \frac{\text{Cent}_H(h)}{\langle h \rangle} \to \mathbb{F}_l,
$$
which does not depend on the choice of lifts. Put
$$
\widetilde{\text{Cent}}_H(h, \theta) := \text{ker}([-, h_0]_{\theta}),
$$
which is by definition a subgroup of $\frac{\text{Cent}_H(h)}{\langle h \rangle}$. Let
$$
\pi_{\theta}: (\mathbb{F}_l \times H, *_{\theta}) \to H
$$ 
be the natural projection map. Our next proposition describes the relationship between $\text{Cent}_H(h)$ and $\widetilde{\text{Cent}}_H(h, \theta)$.

\begin{proposition} 
\label{formation of conjugacy classes}
Let $H, h, \theta$ be as above this proposition. Then
$$
\left[\frac{\textup{Cent}_H(h)}{\langle h \rangle}:\widetilde{\textup{Cent}}_H(h, \theta)\right] \in \{1, l\},
$$
The index equals $1$ if and only if the elements in $\pi_{\theta}^{-1}(h)$ are pairwise non-conjugate in $(\mathbb{F}_l \times H, *_{\theta})$. The index equals $l$ if and only if the elements of $\pi_{\theta}^{-1}(h)$ sit inside a unique conjugacy class in $(\mathbb{F}_l \times H, *_{\theta})$.
\end{proposition}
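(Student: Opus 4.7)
My plan is to establish both assertions simultaneously by analyzing how conjugation inside $(\mathbb{F}_l \times H, *_\theta)$ acts on the fiber $\pi_\theta^{-1}(h)$, which has exactly $l$ elements. The first claim is essentially formal: since $[-, h_0]_\theta$ is a group homomorphism with target $\mathbb{F}_l$, its image is either trivial or all of $\mathbb{F}_l$, so the kernel $\widetilde{\text{Cent}}_H(h, \theta)$ has index $1$ or $l$ in $\text{Cent}_H(h)/\langle h \rangle$. Before invoking this, one should verify that $[-, h_0]_\theta$ is well-defined: independence of the choice of lift $\tilde{x}$ of $x$ follows because $\mathbb{F}_l \subseteq \mathbb{F}_l \times H$ is central, independence of the choice of $h_0$ follows for the same reason, and vanishing on $\langle h \rangle$ follows because $h_0$ commutes with itself.

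For the second part, fix $(a, h) \in \pi_\theta^{-1}(h)$ and consider a conjugate $(b, x) *_\theta (a, h) *_\theta (b, x)^{-1}$. Its projection to $H$ is $xhx^{-1}$, so this conjugate remains in $\pi_\theta^{-1}(h)$ precisely when $x \in \text{Cent}_H(h)$. Since the $\mathbb{F}_l$-factor is central, conjugation by $(b, x)$ depends only on $x$, and the main computation I would carry out is the commutator formula
\[
\tilde{x} \cdot (a, h) \cdot \tilde{x}^{-1} = \bigl( a + [x, h_0]_\theta,\, h \bigr),
\]
valid for any lift $\tilde{x}$ of any $x \in \text{Cent}_H(h)$. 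The cleanest way I see to extract this is to observe that $(a, h)$ and $h_0$ differ by a central element of $\mathbb{F}_l \times H$, so that $\tilde{x}(a,h)\tilde{x}^{-1}(a,h)^{-1} = \tilde{x} h_0 \tilde{x}^{-1} h_0^{-1}$, and the right-hand side equals $[x, h_0]_\theta$ by the very definition of that map. Unpacking via the cocycle, one writes out $(b,x)^{-1} = (-b - \theta(x, x^{-1}), x^{-1})$, using our normalization $\theta(\text{id}, \text{id}) = 0$, and the resulting expression simplifies to the displayed formula.

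Having established the displayed identity, the conclusion is immediate: the intersection of the $(\mathbb{F}_l \times H, *_\theta)$-conjugacy class of $(a, h)$ with $\pi_\theta^{-1}(h)$ equals the affine coset $a + \text{Im}([-, h_0]_\theta) \subseteq \mathbb{F}_l$. When this image is $\{0\}$ (index $1$ case), each of the $l$ lifts of $h$ is its own conjugacy class, hence the lifts are pairwise non-conjugate; when the image is all of $\mathbb{F}_l$ (index $l$ case), all $l$ lifts lie in a single conjugacy class. This matches the two alternatives of the statement.

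The main (though mild) obstacle is the commutator formula itself. Everything else is bookkeeping that is forced by the choice of conventions on $\theta$ and the fact that $\mathbb{F}_l$ sits centrally inside $(\mathbb{F}_l \times H, *_\theta)$; once this identity is in place the proposition essentially writes itself.
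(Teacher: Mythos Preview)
Your proposal is correct and follows essentially the same approach as the paper's proof: both compute that conjugating a lift $(a,h)$ by a lift of $x \in \textup{Cent}_H(h)$ shifts the $\mathbb{F}_l$-coordinate by exactly $[x,h_0]_\theta$, so the image of the commutator map governs whether the $l$ lifts are pairwise non-conjugate or all conjugate. You have simply spelled out more of the bookkeeping (well-definedness of $[-,h_0]_\theta$, the central-shift argument) that the paper leaves implicit in its terse two-line proof.
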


\begin{proof}
Indeed, take a lift $h_0 := (a, h)$ in $\pi_{\theta}^{-1}(h)$. Observe that if we have
$$
[(b, h'), h_0]*_\theta h_0 = (b, h')*_\theta h_0*_\theta (b, h')^{-1} = (a', h)
$$
for some $a' \in \mathbb{F}_l$, then it follows that $h' \in \text{Cent}_H(h)$. From the left hand side we see that if the index is $l$, then $a'$ can take any possible value. If the index is instead equal to $1$, then $a'$ must be equal to $a$. 
\end{proof}

We also have a similar proposition for the exponent of an element. 

\begin{proposition} 
\label{formation of exponents}
Let $H, h, \theta$ be as above. Then either $\pi_{\theta}^{-1}(h)$ consists entirely of elements with order equal to $l \cdot \# \langle h \rangle$ or it consists entirely of elements with order equal to $\#\langle h \rangle$. 
\end{proposition}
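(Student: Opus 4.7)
The plan is to reduce the statement to an explicit computation of $(a,h)^n$, where $n := \#\langle h \rangle$, and to observe that because $H$ is an $l$-group the dependence on $a$ vanishes. Starting directly from the group law
$$
(x_1, g_1) *_\theta (x_2, g_2) = \bigl(x_1 + x_2 + \theta(g_1, g_2),\, g_1 g_2\bigr),
$$
I would prove by a routine induction on $k \geq 1$ the formula
$$
(a, h)^k = \left(ka + \sum_{j=1}^{k-1} \theta(h^j, h),\, h^k \right).
$$
Specializing to $k = n$ gives $(a, h)^n = (c(a), \textup{id})$ for some $c(a) \in \mathbb{F}_l$, since $h^n = \textup{id}$.

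Next I would read off the order of $(a,h)$ from this computation. Because $\pi_\theta^{-1}(\textup{id})$ is a cyclic subgroup of $(\mathbb{F}_l \times H, *_\theta)$ of order $l$ whose identity element is $(0, \textup{id})$ (which uses the normalization $\theta(\textup{id}, \textup{id}) = 0$), the order of $(a,h)$ equals $n$ when $c(a) = 0$ and equals $ln$ otherwise. It therefore suffices to show that $c(a)$ does not depend on $a$. Since $H$ is an $l$-group and $h \neq \textup{id}$, we have $n = l^m$ with $m \geq 1$, so $na \equiv 0$ in $\mathbb{F}_l$ and hence
$$
c(a) = \sum_{j=1}^{n-1} \theta(h^j, h),
$$
which is manifestly independent of $a$. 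The dichotomy in the statement then follows at once.

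I do not anticipate any real obstacle: the whole argument reduces to the inductive identity above and the basic divisibility $l \mid n$ that holds whenever $h \neq \textup{id}$. The only subtlety worth flagging is the degenerate case $h = \textup{id}$, in which $n = 1$ and the fiber contains both the identity element $(0, \textup{id})$ of order $1$ and, if $l > 1$, elements of order $l$; this case is implicitly excluded from the scope of the proposition, whose purpose in the paper is to control non-trivial conjugacy classes of $(\mathbb{F}_l \times H, *_\theta)$.
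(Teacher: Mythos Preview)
Your argument is correct. The inductive formula for $(a,h)^k$ is right, and the key observation that $l \mid n$ kills the $na$ term is exactly what is needed; the remaining quantity $c = \sum_{j=1}^{n-1}\theta(h^j,h)$ is visibly independent of $a$, and the order dichotomy follows since $\pi_\theta^{-1}(\textup{id}) \cong \mathbb{F}_l$. Your caveat about $h = \textup{id}$ is also accurate and applies equally to the paper's own proof.

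The paper takes a more conceptual route: it restricts $\theta$ to the cyclic subgroup $\langle h \rangle$, landing in $H^2(\langle h \rangle,\mathbb{F}_l) = \textup{Ext}(\langle h \rangle,\mathbb{F}_l)$, and then invokes the classification of extensions of a cyclic $l$-group by $\mathbb{F}_l$ (either split, giving order $n$ for every lift, or isomorphic to $\mathbb{Z}/ln\mathbb{Z}$, giving order $ln$). Your explicit sum $c$ is in fact the value of this restricted class under the standard identification $H^2(\mathbb{Z}/n\mathbb{Z},\mathbb{F}_l)\cong\mathbb{F}_l$, so the two arguments compute the same invariant. The paper's version is shorter and makes clear that the dichotomy depends only on $\theta|_{\langle h\rangle}$; your version is more self-contained and avoids appealing to the Ext classification.
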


\begin{proof}
The class $\theta$ restricted to $\langle h \rangle$ gives an element of $H^2(\langle h \rangle, \mathbb{F}_l) = \text{Ext}(\langle h \rangle, \mathbb{F}_l)$. If the class is $0$, then the sequence is split and we have that all the elements of $\pi_{\theta}^{-1}(h)$ have the same order as $h$. If the class is non-zero, then the sequence has the shape
$$
0 \to \mathbb{F}_l \to \mathbb{Z}/l \cdot \#\langle h \rangle \mathbb{Z} \to \langle h \rangle \to 0,
$$
and hence all elements of $\pi_{\theta}^{-1}(h)$ have order $l$ times bigger than that of $h$. 
\end{proof}

In case an $h$ as in Proposition \ref{formation of exponents} satisfies the second conclusion we say that $h$ is $\theta$-stable. We now return to our previous setup. To $g \in G$ we attach the following quantity
$$
j_G(g, \{(G_i, \theta_i)\}_{i \in [r]}) := \#\left\{i \in [r] : \frac{\textup{Cent}_{G_{i - 1}}(\pi_{i - 1}(g))}{\langle \pi_{i - 1}(g) \rangle} \neq \widetilde{\textup{Cent}}_{G_{i - 1}}(\pi_{i - 1}(g), \theta_i)\right\}.
$$
This quantity turns out to be the exponent of $l$ in the size of the conjugacy class $\text{Conj}_G(g)$. We call the $i$'s counted by $j_G(g, \{(G_i, \theta_i)\}_{i \in [r]})$ the \emph{breaks} for $g$ with respect to $\{(G_i, \theta_i)\}_{i \in [r]}$.

\begin{proposition} 
\label{number of breaks=conjugacy class}
We have
$$
\#\textup{Conj}_G(g) = l^{j_G(g, \{(G_i, \theta_i)\}_{i \in [r]})}.
$$
\end{proposition}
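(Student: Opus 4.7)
The plan is to prove this by induction on $i$ along the admissible sequence, showing that
\[
\#\textup{Conj}_{G_i}(\pi_i(g)) = l^{b_i},
\]
where $b_i$ denotes the number of breaks for $g$ among the indices $\{1, \dots, i\}$. The base case $i = 0$ is trivial since $G_0$ is the trivial group and both sides equal $1$. The statement of the proposition is then the case $i = r$.

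For the inductive step, I would exploit the short exact sequence
\[
0 \to \mathbb{F}_l \to G_i \xrightarrow{\pi_{\theta_i}} G_{i - 1} \to 0.
\]
Since $\pi_{\theta_i}$ is a surjective homomorphism, it restricts to a surjection
\[
\pi_{\theta_i}: \textup{Conj}_{G_i}(\pi_i(g)) \twoheadrightarrow \textup{Conj}_{G_{i - 1}}(\pi_{i - 1}(g)).
\]
The key observation is that all fibers of this restricted map have the same cardinality $f$. Indeed, given any two elements $h, h' \in \textup{Conj}_{G_{i - 1}}(\pi_{i - 1}(g))$ with $h' = s h s^{-1}$, picking any lift $\tilde{s} \in G_i$ of $s$, conjugation by $\tilde{s}$ provides a bijection between the fiber over $h$ and the fiber over $h'$; this bijection preserves the ambient conjugacy class $\textup{Conj}_{G_i}(\pi_i(g))$. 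Therefore
\[
\#\textup{Conj}_{G_i}(\pi_i(g)) = f \cdot \#\textup{Conj}_{G_{i - 1}}(\pi_{i - 1}(g)), \qquad f = \#\bigl(\textup{Conj}_{G_i}(\pi_i(g)) \cap \pi_{\theta_i}^{-1}(\pi_{i - 1}(g))\bigr).
\]

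The value of $f$ is read off directly from Proposition \ref{formation of conjugacy classes} applied to $H = G_{i - 1}$, $h = \pi_{i - 1}(g)$ and $\theta = \theta_i$. If $i$ is a break for $g$, then the whole set $\pi_{\theta_i}^{-1}(\pi_{i - 1}(g))$, which has $l$ elements, lies inside a single $G_i$-conjugacy class; that class must be $\textup{Conj}_{G_i}(\pi_i(g))$, so $f = l$. If $i$ is not a break, then the $l$ elements of $\pi_{\theta_i}^{-1}(\pi_{i - 1}(g))$ are pairwise non-conjugate in $G_i$, so exactly one of them lies in $\textup{Conj}_{G_i}(\pi_i(g))$, giving $f = 1$. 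In either case $f = l^{b_i - b_{i - 1}}$, and the inductive step closes.

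The main obstacle is not really an obstacle but a bookkeeping point: one must be careful to confirm that the fiber size $f$ is genuinely independent of the chosen basepoint, which is where the centrality of the kernel $\mathbb{F}_l$ and the surjectivity of $\pi_{\theta_i}$ interact with the conjugation action. Once this is cleanly set up, everything else is a direct invocation of Proposition \ref{formation of conjugacy classes} together with the observation that $\#G_i/\#G_{i - 1} = l$, so that accumulating fiber sizes across the admissible sequence produces exactly the factor $l^{j_G(g, \{(G_i, \theta_i)\}_{i \in [r]})}$.
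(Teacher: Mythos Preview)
Your argument is correct. You proceed by induction along the admissible sequence, tracking the size of the conjugacy class $\textup{Conj}_{G_i}(\pi_i(g))$ directly and using Proposition~\ref{formation of conjugacy classes} to determine the fiber size of the projection $\textup{Conj}_{G_i}(\pi_i(g)) \to \textup{Conj}_{G_{i-1}}(\pi_{i-1}(g))$. The paper instead dualizes via orbit--stabilizer: it works entirely inside $G$ with the filtration $\textup{Cent}_G^i(g) := \pi_i^{-1}(\textup{Cent}_{G_i}(\pi_i(g)))$, expresses $\#\textup{Conj}_G(g) = \#G/\#\textup{Cent}_G(g)$ as a telescoping product of successive indices $[\textup{Cent}_G^i(g):\textup{Cent}_G^{i+1}(g)]$, and then identifies each such index with the index appearing in the definition of a break. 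Both proofs ultimately reduce to Proposition~\ref{formation of conjugacy classes}; your route is arguably more elementary since it avoids the orbit--stabilizer passage and the verification that the two notions of index match, at the modest cost of the fiber-uniformity argument (which you handle correctly via conjugation by a lift). The paper's route has the aesthetic advantage of never leaving the ambient group $G$.
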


\begin{proof}
We have a filtration of subgroups
$$
G = \text{Cent}_G^0(g) \supseteq \text{Cent}_G^1(g) \supseteq \dots \supseteq \text{Cent}_G^r(g) = \text{Cent}_G(g),
$$
where we define
$$
\text{Cent}_G^i(g) := \pi_i^{-1}(\text{Cent}_{G_i}(\pi_i(g)))
$$
for every integer $0 \leq i \leq r$. We have that
$$
\#\text{Conj}_G(g) = \frac{\#G}{\#\text{Cent}_G(g)} = \prod_{0 \leq i \leq r - 1} \frac{\#\text{Cent}_G^i(g)}{\#\text{Cent}_G^{i + 1}(g)}.
$$
Observe that 
$$
[\text{Cent}_G^{i}(g):\text{Cent}_G^{i + 1}(g)] = \left[\frac{\textup{Cent}_{G_i}(\pi_i(g))}{\langle \pi_i(g) \rangle}:\widetilde{\textup{Cent}}_{G_i}(\pi_i(g), \theta_{i + 1})\right].
$$
Therefore the desired conclusion follows at once from Proposition \ref{formation of conjugacy classes}.
\end{proof}

The next proposition provides the crucial link between group theoretic data and the local conditions imposed, through Proposition \ref{defining normalized phi: in general}, on tuples in $\text{Prim}(\mathcal{S}_l^{G - \{\text{id}\}})$. We invoke the notation of Section \ref{sMain}. Let $L/K$ be a finite Galois extension inside $K^{\textup{pro}-l}/K$. Let now $G:=\textup{Gal}(L/K)$ and let $\theta$ be a $2$-cocycle representing a class in $H^2(\textup{Gal}(L/K), \mathbb{F}_l)$, with $\theta(\textup{id}, \textup{id}) = 0$. Let $\mathfrak{q} \in \Omega_K$ be a finite place coprime to $l$. Recall that 
$$
\frac{G_{K_{\mathfrak{q}}}}{I_{\mathfrak{q}}}
$$
is a pro-cyclic group, equipped with a canonical generator $\text{Frob}_{\mathfrak{q}}$. We will fix once and for all the $\textup{proj}(G_K \to \mathcal{G}_K^{\textup{pro}-l}) \circ i_{\mathfrak{q}}^{*}$-image of a lift to $G_{K_{\mathfrak{q}}}$ of such an element. In this way we obtain an element in $\mathcal{G}_K^{\text{pro}-l}$ that we will denote also by $\text{Frob}_{\mathfrak{q}}$: this slight abuse of notation will cause no confusion. As such we have naturally an element
$$
\textup{proj}(\mathcal{G}_K^{\textup{pro}-l} \to G)(\text{Frob}_{\mathfrak{q}}) \in \frac{N_G(\langle \textup{proj}(\mathcal{G}_K^{\textup{pro}-l} \to G)(\sigma_{\mathfrak{q}}) \rangle)}{ \langle \textup{proj}(\mathcal{G}_K^{\textup{pro}-l} \to G)(\sigma_{\mathfrak{q}})\rangle}.
$$
Here $N_G(-)$ denotes the normalizer of a subgroup in $G$. For any non-trivial finite group $G$, we denote by $l_G$ the smallest prime divisor of $\#G$ and by $I(G)$ the subset of $g \in G - \{\text{id}\}$ such that $g^{l_G} = \text{id}$.

\begin{proposition} 
\label{solvable iff commutative}
Let $G = \Gal(L/K)$ be a finite $l$-group and let $\mathfrak{q} \in \Omega_K$ be a finite place coprime to $l$. Assume that $\textup{proj}(\mathcal{G}_K^{\textup{pro}-l} \to G)(\sigma_{\mathfrak{q}})$ is an element of $I(G)$, which we shall also call $\sigma_{\mathfrak{q}}$. Then
$$
\textup{proj}(\mathcal{G}_K^{\textup{pro}-l} \to G)(\textup{Frob}_{\mathfrak{q}}) \in \frac{\textup{Cent}_G(\sigma_{\mathfrak{q}})}{\langle \sigma_{\mathfrak{q}} \rangle}.
$$
Moreover, if $\textup{proj}(\mathcal{G}_K^{\textup{pro}-l} \to G)(\sigma_{\mathfrak{q}})$ is $\theta$-stable, then 
$$
\textup{proj}(\mathcal{G}_K^{\textup{pro}-l} \to G)(\textup{Frob}_{\mathfrak{q}}) \in \frac{\widetilde{\textup{Cent}}_G(\sigma_{\mathfrak{q}}, \theta)}{\langle \sigma_{\mathfrak{q}} \rangle}
$$
if and only if
$$
\theta \textup{ is trivial in } H^2(G_{K_{\mathfrak{q}}}, \mathbb{F}_l). 
$$
\end{proposition}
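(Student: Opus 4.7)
The plan is to exploit the tame local structure at $\mathfrak{q}$ and rephrase the condition on $\theta$ as the solvability of the local central embedding problem attached to $(\mathbb{F}_l \times G, \ast_{\theta}) \twoheadrightarrow G$. For the first assertion, since $\mathfrak{q}$ is coprime to $l$ and $G$ is an $l$-group, the image of $G_{K_{\mathfrak{q}}}$ in $G$ factors through the tame quotient, so one has the standard tame relation
$$
\text{Frob}_{\mathfrak{q}} \cdot \sigma_{\mathfrak{q}} \cdot \text{Frob}_{\mathfrak{q}}^{-1} = \sigma_{\mathfrak{q}}^{N\mathfrak{q}}, \qquad N\mathfrak{q} := \#(\mathcal{O}_K/\mathfrak{q}).
$$
Since $\sigma_{\mathfrak{q}} \in I(G)$ is non-trivial, Proposition \ref{only with local roots of unity} forces $N\mathfrak{q} \equiv 1 \pmod{l}$, and $\sigma_{\mathfrak{q}}^l = \text{id}$ then gives $\sigma_{\mathfrak{q}}^{N\mathfrak{q}} = \sigma_{\mathfrak{q}}$. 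Thus $\text{Frob}_{\mathfrak{q}}$ centralises $\sigma_{\mathfrak{q}}$, establishing the first part.

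For the forward direction of the equivalence, if $\theta$ vanishes in $H^2(G_{K_{\mathfrak{q}}}, \mathbb{F}_l)$ then the local embedding problem admits a solution $\tilde{\rho}_{\mathfrak{q}}: G_{K_{\mathfrak{q}}} \to (\mathbb{F}_l \times G, \ast_{\theta})$ lifting the local representation $\rho_{\mathfrak{q}}$. By Proposition \ref{formation of exponents}, the $\theta$-stability of $\sigma_{\mathfrak{q}}$ guarantees that $\widetilde{\sigma}_{\mathfrak{q}} := \tilde{\rho}_{\mathfrak{q}}(\sigma_{\mathfrak{q}})$ has order $l$. Applying $\tilde{\rho}_{\mathfrak{q}}$ to the tame relation and using $N\mathfrak{q} \equiv 1 \pmod{l}$ yields
$$
\tilde{\rho}_{\mathfrak{q}}(\text{Frob}_{\mathfrak{q}}) \, \widetilde{\sigma}_{\mathfrak{q}} \, \tilde{\rho}_{\mathfrak{q}}(\text{Frob}_{\mathfrak{q}})^{-1} = \widetilde{\sigma}_{\mathfrak{q}}^{N\mathfrak{q}} = \widetilde{\sigma}_{\mathfrak{q}},
$$
so $\tilde{\rho}_{\mathfrak{q}}(\text{Frob}_{\mathfrak{q}})$ and $\widetilde{\sigma}_{\mathfrak{q}}$ commute in the central extension. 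By the very definition of the map $[-, h_0]_{\theta}$ introduced just before Proposition \ref{formation of conjugacy classes}, this is precisely the condition that $\text{Frob}_{\mathfrak{q}}$ lies in $\widetilde{\text{Cent}}_G(\sigma_{\mathfrak{q}}, \theta)/\langle \sigma_{\mathfrak{q}} \rangle$.

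For the converse, the commutativity hypothesis together with the $\theta$-stability of $\sigma_{\mathfrak{q}}$ lets us choose a lift $\widetilde{\sigma}_{\mathfrak{q}}$ of order $l$ and a lift $\widetilde{\text{Frob}}_{\mathfrak{q}}$ commuting with it in $(\mathbb{F}_l \times G, \ast_{\theta})$. Reading the computation above in reverse then shows $\widetilde{\text{Frob}}_{\mathfrak{q}} \widetilde{\sigma}_{\mathfrak{q}} \widetilde{\text{Frob}}_{\mathfrak{q}}^{-1} = \widetilde{\sigma}_{\mathfrak{q}} = \widetilde{\sigma}_{\mathfrak{q}}^{N\mathfrak{q}}$, i.e.\ the pair satisfies the defining tame relation in the central extension. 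Since $(\mathbb{F}_l \times G, \ast_{\theta})$ is a finite $l$-group and the wild inertia at $\mathfrak{q}$ is pro-$p$ with $p \neq l$, any such homomorphism factors through the maximal pro-$l$ tame quotient of $G_{K_{\mathfrak{q}}}$, which is topologically generated by $\text{Frob}_{\mathfrak{q}}$ and $\sigma_{\mathfrak{q}}$ with that single relation. Hence the chosen lifts extend to a continuous splitting $G_{K_{\mathfrak{q}}} \to (\mathbb{F}_l \times G, \ast_{\theta})$ of the local embedding problem, so $\theta$ is trivial in $H^2(G_{K_{\mathfrak{q}}}, \mathbb{F}_l)$.

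The main obstacle is the last step of the converse: one must justify that the commuting pair $(\widetilde{\text{Frob}}_{\mathfrak{q}}, \widetilde{\sigma}_{\mathfrak{q}})$ extends to a genuine continuous homomorphism on all of $G_{K_{\mathfrak{q}}}$, not just the subgroup they topologically generate. This rests on the pro-$l$ tame presentation invoked above; the case in which $\sigma_{\mathfrak{q}}$ projects trivially to the pro-$l$ quotient is handled directly by Proposition \ref{only with local roots of unity}, after which the remaining verification is mechanical.
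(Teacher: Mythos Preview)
Your proof is correct and follows essentially the same strategy as the paper: both exploit the tame pro-$l$ structure of $G_{K_{\mathfrak{q}}}$, use $\theta$-stability to control the order of lifts of $\sigma_{\mathfrak{q}}$, and identify the local solvability of the embedding problem with the commutativity condition defining $\widetilde{\textup{Cent}}_G(\sigma_{\mathfrak{q}},\theta)$. The only notable difference is in the first assertion: the paper argues abstractly that $N_G(\langle g\rangle)=\textup{Cent}_G(g)$ for any $g\in I(G)$ via the homomorphism $N_G(\langle g\rangle)\to\textup{Aut}(\langle g\rangle)\cong\mathbb{F}_{l_G}^\ast$ and coprimality of $\#G$ with $l_G-1$, whereas you go directly through the tame relation and $N\mathfrak{q}\equiv 1\pmod l$; the paper's formulation has the advantage of being reusable (it is invoked again in the proof of Lemma~\ref{lbGKNil}), but your route is equally valid here.
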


\begin{proof}
Let $G$ be any finite non-trivial group. Then we claim that $g \in I(G)$ implies $N_G(\langle g \rangle) = \text{Cent}_G(g)$. Indeed, conjugation induces a homomorphism
$$
N_G(\langle g \rangle) \to \text{Aut}_{\text{gr.}}(\langle g \rangle) \simeq_{\textup{gr.}} \mathbb{F}_{l_G}^{*}.
$$
Since the latter group has size $l_G - 1$, and $l_G$ is the smallest prime divisor of $\#G$, we have that $\#G$ is coprime to $l_G-1$. Therefore the above homomorphism is actually trivial. This means exactly that $N_G(\langle g \rangle) = \text{Cent}_G(g)$ as claimed. Hence we have already shown the first part of this proposition, namely that  
$$
\textup{proj}(\mathcal{G}_K^{\textup{pro}-l} \to G)(\textup{Frob}_{\mathfrak{q}}) \in \frac{\textup{Cent}_G(\sigma_{\mathfrak{q}})}{\langle \sigma_{\mathfrak{q}} \rangle}.
$$
Assume now that $\textup{proj}(\mathcal{G}_K^{\textup{pro}-l} \to G)(\sigma_{\mathfrak{q}})$ is also $\theta$-stable. Observe that since $\sigma_{\mathfrak{q}}$ lands in $I(G)$, we in particular conclude that $\mathfrak{q}$ ramifies in $L/K$. It follows from Proposition \ref{only with local roots of unity} that
$$
q := \#\left(\mathcal{O}_K/\mathfrak{q} \right) \ \equiv 1 \ \text{mod} \ l.
$$
Also observe that the maximal pro-$l$ quotient of $G_{K_{\mathfrak{q}}}$ is isomorphic to
$$
\mathbb{Z}_l \rtimes q^{\mathbb{Z}_l},
$$
where $q$ acts by multiplication by $q$ on $\mathbb{Z}_l$. Here $I_{K_{\mathfrak{q}}}$ is sent to $\mathbb{Z}_l \rtimes \{1\}$, while a lift of $\text{Frob}_{\mathfrak{q}}$ is sent to $\{0\} \rtimes \{q\}$. Since $\textup{proj}(\mathcal{G}_K^{\textup{pro}-l} \to G)(\sigma_{\mathfrak{q}})$ is $\theta$-stable and lands in $I(G)$, we have that the lifting problem imposed by $\theta$ factors through
$$
l \cdot \mathbb{Z}_l \rtimes \{1\}.
$$
Since $q$ is $1$ modulo $l$, the resulting quotient is simply
$$
\mathbb{F}_l \times q^{\mathbb{Z}_l}.
$$
Recalling once more that $\textup{proj}(\mathcal{G}_K^{\textup{pro}-l} \to G)(\sigma_{\mathfrak{q}})$ is $\theta$-stable, we see that the lifting problem is solvable if and only if the restriction of $\theta$ to 
$$
\textup{proj}(G_K \to G) \circ i_{\mathfrak{q}^*}(G_{K_{\mathfrak{q}}})
$$
is in 
$$
\text{Ext}(\textup{proj}(G_K \to G) \circ i_{\mathfrak{q}^*}(G_{K_{\mathfrak{q}}}), \mathbb{F}_l).
$$
This is precisely equivalent to asking
$$
\textup{Frob}_{\mathfrak{q}} \in \frac{\widetilde{\textup{Cent}}_G(\sigma_{\mathfrak{q}}, \theta)}{\langle \sigma_{\mathfrak{q}} \rangle}
$$
as was to be shown.
\end{proof}

The final lemma of this subsection provides a simple way to compute the constant $b(G, K)$ for nilpotent $G$.

\begin{lemma}
\label{lbGKNil}
Let $G$ be a non-trivial, finite group and let $K$ be a number field. Then we have
\[
b(G, K) = \frac{\#\{C \in \textup{Conj}(G) : C \subseteq I(G)\}}{[K(\zeta_{l_G}) : K]}.
\]
\end{lemma}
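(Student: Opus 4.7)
The plan is to count orbits of the $\Gal(\overline{K}/K)$-action on the finite set $\mathcal{C} := \{C \in \textup{Conj}(G) : C \subseteq I(G)\}$ via the cyclotomic character, and check that every orbit has size exactly $[K(\zeta_{l_G}) : K]$, which immediately yields the claimed formula. Recall that the action is $\sigma \ast C = C^{\chi(\sigma)}$. Because every element of any $C \in \mathcal{C}$ has order $l_G$, the class $C^{\chi(\sigma)}$ depends on $\chi(\sigma)$ only modulo $l_G$, so the action factors through the reduction map $\Gal(\overline{K}/K) \twoheadrightarrow \Gal(K(\zeta_{l_G})/K) \hookrightarrow (\Z/l_G\Z)^{\ast}$, and the acting group has order $[K(\zeta_{l_G}) : K]$.

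The main step is to show this action is free. Fix $g \in I(G)$ and suppose $g^n$ is conjugate to $g$ for some $n$ coprime to $l_G$; I claim $n \equiv 1 \pmod{l_G}$. Any conjugator realizing this lies in $N_G(\langle g \rangle)$, so the induced homomorphism
\[
N_G(\langle g \rangle)/\textup{Cent}_G(g) \hookrightarrow \Aut(\langle g \rangle) \cong (\Z/l_G\Z)^{\ast}
\]
has image of order dividing both $\#G$ and $l_G - 1$. Since $l_G$ is the smallest prime divisor of $\#G$, these two integers are coprime, so the image is trivial, forcing $N_G(\langle g \rangle) = \textup{Cent}_G(g)$ and hence $n \equiv 1 \pmod{l_G}$. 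This is exactly the coprimality observation already used at the start of the proof of Proposition \ref{solvable iff commutative}; it is the only genuinely nontrivial input.

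With freeness in hand, orbit-stabilizer gives that each $\Gal(K(\zeta_{l_G})/K)$-orbit in $\mathcal{C}$ has size $[K(\zeta_{l_G}) : K]$, so the number of orbits is $\#\mathcal{C}/[K(\zeta_{l_G}) : K]$, which is precisely the right-hand side of the display. Note in particular that the argument does not use nilpotency of $G$; the coprimality $\gcd(\#G, l_G - 1) = 1$ suffices.
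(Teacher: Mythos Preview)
Your proof is correct and follows essentially the same approach as the paper's own proof: both factor the action through $\Gal(K(\zeta_{l_G})/K)$, prove freeness by showing $N_G(\langle g \rangle) = \textup{Cent}_G(g)$ via the coprimality of $\#G$ and $l_G - 1$, and invoke the same observation from the beginning of Proposition~\ref{solvable iff commutative}. Your explicit remark that nilpotency is unnecessary is a nice addition, though it is already implicit in the paper's hypothesis ``finite group''.
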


\begin{proof}
Recall that there is a natural action of $\Gal(\overline{K}/K)$ on
\[
X := \{C \in \textup{Conj}(G) : C \subseteq I(G)\},
\]
which sends a conjugacy class $C$ to $C^{\chi(\sigma)}$ with $\chi: \Gal(\overline{K}/K) \rightarrow \hat{\Z}^\ast$ the cyclotomic character. By definition $b(G, K)$ equals the number of orbits of this group action. But our action clearly factors through $\Gal(K(\zeta_{l_G})/K)$. We claim that the induced action of $\Gal(K(\zeta_{l_G})/K)$ on $X$ is free, which implies the lemma.

So suppose that there exists $\sigma \in \Gal(K(\zeta_{l_G})/K)$ such that
\[
C^{\chi(\sigma)} = C.
\]
This implies that there exists a non-trivial $g \in C$ such that $g$ and $g^{\chi(\sigma)}$ are conjugate, say
\[
g = h^{-1} g^{\chi(\sigma)} h,
\]
and hence $h \in N_G(\langle g \rangle) = \text{Cent}_G(g)$ by the argument given at the start of Proposition \ref{solvable iff commutative}. We conclude that $g = g^{\chi(\sigma)}$, which forces $\sigma$ to be the identity as desired.
\end{proof}

\subsection{Interpretation of Malle's constant}
\label{sHeuristic}
The goal of this subsection is to give a heuristic supporting Malle's conjecture in the nilpotent case. Our heuristic is based on a combination of the parametrization given in Proposition \ref{It is a parametrization: in general} and the examination of the local conditions carried out in Subsection \ref{ssGroup}. To simplify the notation, we shall limit ourselves to the case where $G$ is an $l$-group. We leave it to the reader to generalize the material below to arbitrary nilpotent groups $G$. 

Ignoring the finitely many bad places in $S_{\text{clean}}(l)$, it follows from Proposition \ref{It is a parametrization: in general} and Proposition \ref{reading off discriminants: in general} that
\[
\#\{\psi \in \text{Epi}_{\text{top.gr.}}(\mathcal{G}_K^{\text{pro}-l}, G): |N_{K/\mathbb{Q}} \text{Disc}(\psi)| \leq X\}
\]
should have order of magnitude
\[
\#\{(v_g(1), v_g(2))_{g \in G - \{\text{id}\}} \in \text{Prim}(\mathcal{S}_l^{G - \{\text{id}\}})(\text{solv.}): \prod_{g \in G - \{\text{id}\}}(|N_{K/\mathbb{Q}}v_g(2)|)^{\#G \cdot (1-\frac{1}{\#\langle g \rangle})} \leq X\}.
\]
We now focus on the variables $(v_g(1), v_g(2))$ with $g \in I(G)$. Upon combining Proposition \ref{solvable iff commutative} and Proposition \ref{Reading off inertia: in general}, we see that the primes $\mathfrak{q}$ dividing $v_g(2)$ impose a local condition only at the breaks for $g$ in the admissible sequence $\{(G_i, \theta_i)\}_{i \in [r]}$. The local conditions at the points that are not breaks, are automatically satisfied in virtue of Proposition \ref{solvable iff commutative}. Now pretend that the values
$$
\text{Frob}_{\mathfrak{q}} \in \frac{\text{Cent}_{G_{i - 1}}(\sigma_{\mathfrak{q}})}{\langle \sigma_{\mathfrak{q}} \rangle }
$$
are jointly equidistributed at every break point $i$. Then, in virtue of Proposition \ref{solvable iff commutative}, we get the following sum
$$
\approx \sum_{\prod_{g \in G - \{\text{id}\}}(|N_{K/\mathbb{Q}}v_g(2)|)^{\#G \cdot (1-\frac{1}{\#\langle g \rangle})} \leq X} \left(\prod_{g \in I(G)} \frac{1}{l^{\omega(v_g(2)) j_G(g, \{(G_i, \theta_i)\}_{i \in [r]})}} \right),
$$
where the sum runs over all points $(v_g(1), v_g(2))$ in $\text{Prim}(\mathcal{S}_l^{G - \{\text{id}\}})$. Thanks to Proposition \ref{number of breaks=conjugacy class} the latter expression equals
\begin{align}
\label{eSimpleMalle}
\sum_{\prod_{g \in G - \{\text{id}\}}(|N_{K/\mathbb{Q}}v_g(2)|)^{\#G \cdot (1-\frac{1}{\#\langle g \rangle})} \leq X} \left(\prod_{g \in I(G)} \frac{1}{\#\text{Conj}_G(g)^{\omega(v_g(2))}} \right),
\end{align}
where the sum still ranges over all points $(v_g(1), v_g(2))$ in $\text{Prim}(\mathcal{S}_l^{G - \{\text{id}\}})$. Standard analytic techniques, see Theorem \ref{tSelbergDelange}, show that the sum in equation (\ref{eSimpleMalle}) is asymptotic to
$$
c(G, K) \cdot X^{a(G)} \cdot \log(X)^{\beta(G, K) - 1}, \quad \beta(G, K) := \sum_{g \in I(G)} \frac{1}{\# \text{Conj}_G(g) \cdot [K(\zeta_l) : K]},
$$
where $\beta(G, K)$ is the Malle constant by Lemma \ref{lbGKNil}. We remark that to turn this simple heuristic into an argument one also has to pay careful attention to the local conditions at the primes dividing variables outside of $I(G)$ and to the local conditions at the primes in $S_{\text{clean}}(l)$. These will affect the constant $c(G, K)$ in the asymptotic. We finish this section by explaining the interplay between this heuristic and the proofs of our main theorems.

During the proof of Theorem \ref{mt3: Upper bound general} we simply ignore the local conditions, at the cost of losing track of the conjugation in $G$: for this reason we get $i(G, K)-1$ instead of $b(G, K) - 1$. 

Correspondingly for those $G$ for which $i(G, K) - 1$ and $b(G, K) - 1$ coincide we have that all the elements of $I(G)$ are central, and consistently with Proposition \ref{solvable iff commutative} we have no local conditions coming from the variables in $I(G)$. In this case we are able to prove an asymptotic in Theorem \ref{mt4: Asymptotic}. 

Finally in the proof of Theorem \ref{mt5: right upper bound}, thanks to the fact that the elements of $I(G)$ are pairwise commuting, we have to control the behavior of $\text{Frob}_{\mathfrak{q}}$ in the quotient $\frac{G}{I(G) \cup \{\text{id}\}}$ for each $\mathfrak{q}$ dividing a variable in $I(G)$. This is very convenient, since the corresponding field is constructed out of the variables outside of $I(G)$, and those have a very large weight in the formula for the discriminant given in Proposition \ref{reading off discriminants: in general}. As such, they can almost be treated as fixed, and the required joint equidistribution of Frobenius elements is provable by appealing to the Chebotarev density theorem. Hence in this case we can partially turn the above heuristic into a rigorous argument: since we control only the local conditions at the places dividing variables in $I(G)$, we naturally end up with an upper bound of the correct order of magnitude.

\section{Analytic considerations}
\label{sAna}
In this section we provide the analytic tools used to prove our main theorems. The material in this section is a generalization of the material in Montgomery--Vaughan \cite[Section 7.4]{MoVa} and is an application of the Selberg--Delange method. Let $K$ be a number field, let $L$ be an abelian extension of $K$ and let $S \subseteq \Gal(L/K)$. Write $\mathcal{I}_K$ for the group of non-zero fractional ideals of $K$. For a squarefree ideal $I$ of $\mathcal{O}_K$ we define $\omega(I)$ for the number of prime divisors $\mathfrak{p}$ of $I$ and $\omega_S(I)$ for the number of prime divisors $\mathfrak{p}$ of $I$ that are unramified in $L$ and satisfy $\text{Frob}_{\mathfrak{p}} \in S$. Given a complex number $z$ and a collection of prime ideals $\mathcal{P}$, we are interested in the sum
\[
A_z(x) := \sum_{\substack{N_{K/\Q}(I) \leq x \\ \mathfrak{p} \mid I \Rightarrow \text{Frob}_\mathfrak{p} \in S \text{ and } \mathfrak{p} \not \in \mathcal{P}}} \mu^2(I) z^{\omega(I)} = \sum_{\substack{N_{K/\Q}(I) \leq x \\ \mathfrak{p} \mid I \Rightarrow \text{Frob}_\mathfrak{p} \in S \text{ and } \mathfrak{p} \not \in \mathcal{P}}} \mu^2(I) z^{\omega_S(I)},
\]
where $\mu$ is the M\"obius function of $\mathcal{O}_K$. Write
\begin{align}
\label{eCoeff}
F(s, z) = \sum_{\substack{I \in \mathcal{I}_K \\ \mathfrak{p} \mid I \Rightarrow \text{Frob}_\mathfrak{p} \in S \text{ and } \mathfrak{p} \not \in \mathcal{P}}} \frac{\mu^2(I) z^{\omega(I)}}{N_{K/\Q}(I)^s} = \sum_{n = 1}^\infty \frac{a_z(n)}{n^s}
\end{align}
for its Dirichlet series with coefficients $a_z(n)$. Then we have for $s = \sigma + it$
\begin{align*}
F(s, z) &= \prod_{\substack{\mathfrak{p} \not \in \mathcal{P} \\ \text{Frob}_{\mathfrak{p}} \in S}} \left(1 + \frac{z}{N_{K/\Q}(\mathfrak{p})^{s}}\right) \\
&= \prod_{\mathfrak{p} \not \in \mathcal{P}} \left(1 + \frac{z \sum_{\sigma \in S} \frac{1}{\# \Gal(L/K)}\sum_{\chi \in \Gal(L/K)^\vee} \chi(\text{Frob}_\mathfrak{p}) \overline{\chi(\sigma)}}{N_{K/\Q}(\mathfrak{p})^{s}}\right) \text{ for } \sigma > 1,
\end{align*}
where $\Gal(L/K)^\vee$ is by definition $\text{Hom}(\Gal(L/K), \mathbb{C}^\ast)$. We assume that the Euler product
\begin{align}
\label{ePsmall}
\prod_{\mathfrak{p} \in \mathcal{P}} \left(1 + \frac{1}{N_{K/\Q}(\mathfrak{p})^s}\right)
\end{align}
converges absolutely in the region $\sigma > 1 - \delta$ for some constant $\delta > 0$. Then we approximate the Dirichlet series $F(s, z)$ with
\[
G(s, z) := \prod_{\sigma \in S} \prod_{\chi \in \Gal(L/K)^\vee} L(s, \chi)^{\frac{z \overline{\chi(\sigma)}}{\# \Gal(L/K)}},
\]
where
\[
L(s, \chi) = \prod_{\mathfrak{p}} \left(1 - \frac{\chi(\text{Frob}_{\mathfrak{p}})}{N_{K/\Q}(\mathfrak{p})^s}\right)^{-1} \text{ for } \sigma > 1.
\]
We recall that $L(s, \chi)^z$ is by definition $e^{z \log L(s, \chi)}$. Note that $\log L(s, \chi)$ exists since the region $\sigma > 1$ is simply connected and $L(s, \chi)$ does not vanish in this region. We choose our determination of the logarithm in such a way that it agrees with the real logarithm for real $s$. It follows from equation (\ref{ePsmall}) that we have the fundamental relation
\[
F(s, z) = G(s, z) H(s, z),
\]
where $H(s, z)$ is defined by an absolutely convergent Euler product in the region $\sigma > 1 - \delta$ for some $\delta > 0$. In particular, if $|z| \leq R$, then there exists some constant $\delta(R) > 0$ such that $H(s, z)$ is a bounded non-zero holomorphic function on $\sigma > 1 - \delta(R)$.

\begin{theorem}
\label{tSelbergDelange}
Let $K$, $L$, $S$ and $\mathcal{P}$ as above. Then we have for all positive real numbers $R$ and all $|z| \leq R$
\[
A_z(x) = C x(\log x)^{\frac{z \# S}{\# \Gal(L/K)} - 1} + O_{R, K, L, S, \mathcal{P}}\left(x (\log x)^{\frac{\textup{Re}(z) \# S}{\# \Gal(L/K)} - 2}\right),
\]
where $C > 0$ is a real constant depending only on $z$, $K$, $L$, $S$ and $\mathcal{P}$.
\end{theorem}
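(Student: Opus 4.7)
The plan is to apply the Selberg--Delange method, exactly as in \cite[Chapter 7]{MoVa}; the factorisation $F(s,z) = G(s,z) H(s,z)$ set up in the preamble has been arranged precisely so that this machinery can be turned on. The first step is to interchange the two products defining $G(s,z)$, producing
\[
G(s,z) = \prod_{\chi \in \Gal(L/K)^\vee} L(s,\chi)^{w_\chi(z)}, \qquad w_\chi(z) := \frac{z}{\#\Gal(L/K)}\sum_{\sigma \in S} \overline{\chi(\sigma)}.
\]
The trivial character $\chi_0$ satisfies $w_{\chi_0}(z) = z\,\#S/\#\Gal(L/K)$ and $L(s,\chi_0) = \zeta_K(s)$ contributes the only singularity at $s=1$, namely a simple pole. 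By class field theory each non-trivial $L(s,\chi)$ agrees with an abelian Hecke $L$-function which is entire and non-vanishing at $s=1$. Consequently I would write $G(s,z) = (s-1)^{-w_{\chi_0}(z)}\,\widetilde{G}(s,z)$ with $\widetilde{G}(s,z)$ holomorphic and non-zero near $s=1$, whence
\[
F(s,z) = (s-1)^{-w_{\chi_0}(z)}\, \mathcal{H}(s,z), \qquad \mathcal{H}(s,z) := \widetilde{G}(s,z)\, H(s,z),
\]
with $\mathcal{H}$ holomorphic, bounded, and non-vanishing in a region of the form $\sigma > 1 - \delta'(R)$, uniformly for $|z|\le R$.

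With this factorisation in hand, I would invoke Perron's formula at $c = 1 + 1/\log x$ and deform the resulting contour into a Hankel-type path that encircles the branch point $s=1$ along a cut $(-\infty,1]$, with two vertical sides lying inside a standard zero-free region for $\zeta_K$ and the non-trivial $L(s,\chi)$. Polynomial bounds on vertical lines for the $L$-functions, raised to the powers $w_\chi(z)$ and controlled uniformly in $|z|\le R$ by a compactness argument, suppress the contribution from the vertical pieces. The loop around $s=1$ is evaluated by substituting the Taylor expansion $\mathcal{H}(s,z) = \mathcal{H}(1,z) + (s-1)\mathcal{H}_1(s,z) + \cdots$ and using the classical Hankel representation $\frac{1}{2\pi i}\oint (-u)^{-w} e^{-u}\, du = 1/\Gamma(w)$, yielding the main term
\[
C(z)\, x\, (\log x)^{w_{\chi_0}(z) - 1}, \qquad C(z) = \frac{\mathcal{H}(1,z)}{\Gamma(w_{\chi_0}(z))},
\]
while the next Taylor coefficient $\mathcal{H}_1(1,z)$ provides exactly an error of size $x(\log x)^{\textup{Re}(w_{\chi_0}(z)) - 2}$.

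The main technical obstacle, and the reason one must use the Selberg--Delange formalism rather than the elementary Wirsing approach, is keeping every estimate uniform in $z$ over the whole closed disk $|z|\le R$. This succeeds because the weights $w_\chi(z)$ are linear in $z$, the $L$-function bounds on vertical lines are polynomial in $|\textup{Im}(s)|$ and survive being raised to uniformly bounded complex powers, and $\mathcal{H}(s,z)$ depends continuously on $z$ and hence is uniformly bounded on compacta. Positivity of $C$ in the conclusion applies to the real positive range of $z$ actually used in the applications: the Dirichlet coefficients $a_z(n)$ in (\ref{eCoeff}) are non-negative for $z\ge 0$, $\mathcal{H}(1,z)$ is a positive real number there, and $\Gamma$ is positive on $\mathbb{R}_{>0}$, giving $C > 0$ as asserted.
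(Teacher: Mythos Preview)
Your proposal is correct and follows essentially the same route as the paper: both sketches apply the Selberg--Delange method as in \cite[Chapter~7]{MoVa}, using Perron's formula, deforming the contour around the branch point at $s=1$, and extracting the main term from the Hankel loop while bounding the remaining pieces using standard zero-free regions and convexity-type bounds for $\zeta_K$ and the abelian $L(s,\chi)$. One small imprecision: you claim $\mathcal{H}(s,z)$ is holomorphic and non-vanishing on an entire half-plane $\sigma > 1-\delta'(R)$, but the non-trivial $L(s,\chi)$ can have zeros there; the correct statement (which you implicitly use later when invoking the zero-free region) is that $\mathcal{H}$ is holomorphic and non-vanishing on the deformed contour, which is chosen to lie inside a standard zero-free region of the shape $\sigma > 1 - c/\log(|t|+2)$.
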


\begin{proof}
Since the proof is similar to Montgomery--Vaughan \cite[Theorem 7.17]{MoVa}, we shall only sketch the necessary modifications. Set $a = 1 + 1/\log x$. An effective version of Perron's formula, see \cite[Corollary 5.3]{MoVa}, shows that
\[
A_z(x) - \frac{1}{2\pi i} \int_{a - iT}^{a + iT} F(s, z) \frac{x^s}{s} ds \ll \sum_{\frac{1}{2}x < n < 2x} |a_z(n)| \min\left(1, \frac{x}{T|x - n|}\right) + \frac{x^a}{T} \sum_{n = 1}^\infty |a_z(n)| n^{-a},
\]
where we recall that $a_z(n)$ is defined by equation (\ref{eCoeff}) and $T$ is a parameter at our disposal. We choose $T = \exp(\sqrt{\log x})$ and estimate the error terms as in \cite{MoVa}. To do so, we need to have a good estimate for the sum
\[
\sum_{|n - x| \leq \frac{x}{(\log x)^{(2R)^{[K : \Q]} + R + 1}}} |a_z(n)| \leq \sum_{|n - x| \leq \frac{x}{(\log x)^{(2R)^{[K : \Q]} + R + 1}}} (2R)^{[K : \Q] \omega(n)},
\]
where we assume without loss of generality that $R$ is an integer greater than $1$. The latter sum is estimated in \cite[Theorem 7.17]{MoVa} with Dirichlet's hyperbola method.

We next move the path of integration. Note that $F(s, z)$ has a branch point at $s = 1$ if $z$ is not an integer. For this reason, we move the path of integration in such a way to avoid this branch point. Put $b = 1 - c/\log T$, where $c$ is a small positive constant. Let $\mathcal{C}_1$ be the polygonal path with vertices $a - iT, b - iT, b - i/\log x$, let $\mathcal{C}_2$ be the line segment from $b - i/\log x$ to $1 - i/\log x$, followed by a semicircle $\{1 + e^{i \theta}/\log x: -\pi/2 \leq \theta \leq \pi/2\}$, and a line segment from $1 + i/\log x$ to $b + i/\log x$, and finally let $\mathcal{C}_3$ be the polygonal path with vertices $b + i/\log x, b + iT, a + iT$. 

Let $\mathcal{D}$ be the region enclosed by $\mathcal{C}_1, \mathcal{C}_2, \mathcal{C}_3$ and the line segment from $a - iT$ to $ a + iT$. If $c$ is sufficiently small, then $L(s, \chi)$ has no zeroes in the region $\mathcal{D}$ by \cite[Theorem 5.10]{IK}. Clearly, $H(s, z)$ also has no zeroes in $\mathcal{D}$ provided that $c$ is sufficiently small. Since the union of $\mathcal{D}$ with the region $\text{Re}(s) > 1$ is still simply connected, $\log L(s, \chi)$ and $\log H(s, z)$ are also well-defined in this region.

The main term comes from the integral over $\mathcal{C}_2$, and is extracted in exactly the same way as in the proof of \cite[Theorem 7.17]{MoVa}. Finally, Montgomery--Vaughan estimate the integrals on the paths $\mathcal{C}_1$ and $\mathcal{C}_3$ by appealing to bounds for $\zeta(s)$, see their \cite[Theorem 6.7]{MoVa}. Hence we need to supply similar bounds for $\zeta_K(s)$ and $L(s, \chi)$. These can be derived by following the proof of \cite[Theorem 6.7]{MoVa}, where we use \cite[Proposition 5.7, (2)]{IK} as a replacement for \cite[Lemma 6.4]{MoVa}.
\end{proof}

Write $\ast$ for the Dirichlet convolution on $\mathcal{I}_K$. In Section \ref{sTheorems} we will combine Theorem \ref{tSelbergDelange} with the following general lemma on convolutions.

\begin{lemma}
\label{lConvolution}
Let $f, g: \mathcal{I}_K \rightarrow \mathbb{R}$ be functions such that
\[
\sum_{N_{K/\Q}(I) \leq x} f(I) = C_1x (\log x)^A + O(x (\log x)^{A - \delta}), \sum_{N_{K/\Q}(I) \leq x} g(I) = C_2x (\log x)^B + O(x (\log x)^{B - \delta})
\]
for some real numbers $A, B > -1$, $C_1, C_2 > 0$ and $0 < \delta < 1$. Then there is $C_3 > 0$ such that
\[
\sum_{N_{K/\Q}(I) \leq x} (f \ast g)(I) = C_3 x (\log x)^{A + B + 1} + O(x (\log x)^{A + B + 1 - \delta}).
\]
\end{lemma}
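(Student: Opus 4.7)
The plan is to combine Dirichlet's hyperbola method with Abel summation. Write $F(y) := \sum_{N_{K/\Q}(I) \leq y} f(I)$ and $G(y) := \sum_{N_{K/\Q}(I) \leq y} g(I)$ for the partial sums, so that the hypothesis becomes $F(y) = C_1 y (\log y)^A + O(y (\log y)^{A-\delta})$ and similarly for $G$. First I would expand the convolution and split the double sum at $\sqrt{x}$ to obtain the hyperbola identity
\[
\sum_{N_{K/\Q}(I) \leq x}(f \ast g)(I) = \sum_{N_{K/\Q}(J) \leq \sqrt{x}} f(J)\, G\bigl(x/N_{K/\Q}(J)\bigr) + \sum_{N_{K/\Q}(K) \leq \sqrt{x}} g(K)\, F\bigl(x/N_{K/\Q}(K)\bigr) - F(\sqrt{x}) G(\sqrt{x}).
\]
The boundary product is $O(x(\log x)^{A+B})$ and can be absorbed into the final error.

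Next, I would substitute the asymptotic for $G(x/N_{K/\Q}(J))$ into the first sum, producing a main piece
\[
C_2\, x \sum_{N_{K/\Q}(J) \leq \sqrt{x}} \frac{f(J)}{N_{K/\Q}(J)} \bigl(\log(x/N_{K/\Q}(J))\bigr)^B
\]
plus an error piece of the same shape but with exponent $B-\delta$. Applying Abel summation to each sum with respect to $F$ converts it to the integral of $F(t)$ against the derivative of the smooth kernel $t \mapsto t^{-1}(\log(x/t))^B$ on $[1,\sqrt{x}]$, plus a boundary value at $t=\sqrt{x}$. Plugging the asymptotic for $F$ into this integral then reduces the main-term computation to
\[
C_1 C_2\, x \int_1^{\sqrt{x}} \frac{(\log t)^A (\log(x/t))^B}{t}\, dt,
\]
which via the substitution $u = (\log t)/\log x$ equals $C_1 C_2 (\log x)^{A+B+1} \int_0^{1/2} u^A (1-u)^B\, du$. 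Treating the second hyperbola sum symmetrically and adding recovers the Beta integral over the whole interval $[0,1]$, so the combined main term is $C_3\, x (\log x)^{A+B+1}$ with $C_3 := C_1 C_2 \int_0^1 u^A (1-u)^B\, du > 0$.

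For the error contribution, each Abel-summation boundary value and each integral arising from the error factors $(\log t)^{A-\delta}$ or $(\log(x/t))^{B-\delta}$ is of order $x (\log x)^{A+B+1-\delta}$: the same change of variables, using that on $1 \leq t \leq \sqrt{x}$ both $\log t$ and $\log(x/t)$ are comparable to $\log x$ up to a bounded factor, shows that a loss of $\delta$ in one of the exponents translates directly into a $\delta$-savings in the final integrated bound.

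The main obstacle is simply the careful bookkeeping of the errors. Since $A$ and $B$ are arbitrary real numbers greater than $-1$, one must verify that no spurious contribution of size $(\log x)^{A+B+1}$ leaks out of the Abel-summation boundary term at $t = \sqrt{x}$ or out of the integral of the $O$-error part of $F$ against the kernel. This is controlled by the fact that $\log(x/t) \geq (\log x)/2$ throughout $[1,\sqrt{x}]$, so the weight $(\log(x/t))^B$ is smooth with derivative of size $(\log x)^{B-1}/t$, and the $\delta$-savings in either the $F$- or $G$-asymptotic propagates cleanly through integration against this smooth weight.
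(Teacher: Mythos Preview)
Your proposal is correct and follows essentially the same route as the paper's proof: Dirichlet's hyperbola method, substitution of the asymptotic for the inner sum, partial summation against $F$, and evaluation of the integral $\int_1^{\sqrt{x}} (\log t)^A(\log(x/t))^B\,dt/t$. The only difference is cosmetic: the paper computes this integral by expanding $(1-\log t/\log x)^B$ as a binomial series and summing term by term, whereas your substitution $u=(\log t)/\log x$ identifies it directly as an incomplete Beta integral and yields the cleaner closed form $C_3=C_1C_2\int_0^1 u^A(1-u)^B\,du$; one small slip to correct is that $\log t$ is \emph{not} comparable to $\log x$ near $t=1$, but this does not affect your substitution argument.
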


\begin{proof}
It follows from Dirichlet's hyperbola method that
\[
\sum_{N_{K/\Q}(I) \leq x} (f \ast g)(I) = \sum_{N_{K/\Q}(IJ) \leq x} f(I) g(J)
\]
equals
\begin{multline}
\label{eDirichlet}
\sum_{N_{K/\Q}(I) \leq \sqrt{x}} \sum_{N_{K/\Q}(J) \leq \frac{x}{N_{K/\Q}(I)}} f(I) g(J) + \sum_{N_{K/\Q}(J) \leq \sqrt{x}} \sum_{N_{K/\Q}(I) \leq \frac{x}{N_{K/\Q}(J)}} f(I) g(J)  \\
- \sum_{N_{K/\Q}(I) \leq \sqrt{x}} \sum_{N_{K/\Q}(J) \leq \sqrt{x}} f(I) g(J).
\end{multline}
The latter sum is at most $O(x (\log x)^{A + B})$. Since the first two sums in equation (\ref{eDirichlet}) play a symmetric role, we shall only treat the first sum. The first sum equals
\begin{align}
\label{eMainTerm}
\sum_{N_{K/\Q}(I) \leq \sqrt{x}} \frac{C_2xf(I)}{N_{K/\Q}(I)} \left(\log \frac{x}{N_{K/\Q}(I)}\right)^B = C_2x \hspace{-0.6cm} \sum_{N_{K/\Q}(I) \leq \sqrt{x}} \frac{f(I)}{N_{K/\Q}(I)} \left(\log x - \log N_{K/\Q}(I)\right)^B
\end{align}
up to an error of size bounded by
\begin{align}
\label{eEasyError}
O\left(x \sum_{N_{K/\Q}(I) \leq \sqrt{x}} \frac{f(I)}{N_{K/\Q}(I)} \left(\log x - \log N_{K/\Q}(I)\right)^{B - \delta}\right).
\end{align}
We shall give an asymptotic formula for equation (\ref{eMainTerm}), from which it will also be clear how to treat the error term in equation (\ref{eEasyError}). Put
\[
F(t) := \sum_{N_{K/\Q}(I) \leq t} f(I),
\]
so that we have the formula
\begin{align}
\label{eFtAsym}
F(t) = C_1t (\log t)^A + O(t (\log t)^{A - \delta})
\end{align}
by assumption. Partial summation shows that
\begin{multline*}
\sum_{N_{K/\Q}(I) \leq \sqrt{x}} \frac{f(I)}{N_{K/\Q}(I)} \left(\log x - \log N_{K/\Q}(I)\right)^B = \\
\frac{F(\sqrt{x}) \cdot (\frac{1}{2} \log x)^B}{\sqrt{x}} - \int_1^{\sqrt{x}} F(t) d\left(\frac{(\log x - \log t)^B}{t}\right).
\end{multline*}
The first term is $O((\log x)^{A + B})$. Plugging in equation (\ref{eFtAsym}) shows that the second term above equals
\[
C_1 \int_1^{\sqrt{x}} \frac{(\log t)^A (\log x - \log t)^B}{t} dt + O\left((\log x)^{A + B + 1 - \delta}\right).
\]
Recall the Taylor expansion, valid for $-\log x < \log t < \log x$
\[
(\log x - \log t)^B = (\log x)^B \left(1 - \frac{\log t}{\log x}\right)^B = (\log x)^B \sum_{k = 0}^\infty \binom{B}{k} \left(\frac{-\log t}{\log x}\right)^k,
\]
where $\binom{B}{k}$ is the generalized binomial coefficient. Since the Taylor expansion converges uniformly for $1 \leq t \leq \sqrt{x}$, we may switch the infinite sum and the integral to obtain
\[
(\log x)^B \sum_{k = 0}^\infty \frac{(-1)^k \binom{B}{k}}{(\log x)^k} \int_1^{\sqrt{x}} \frac{(\log t)^{A + k}}{t} dt = (\log x)^{A + B + 1} 2^{-A - 1} \sum_{k = 0}^\infty \frac{(-1)^k \binom{B}{k}}{2^k(A + k + 1)},
\]
where we used that $A > -1$ to compute the integral. We conclude that equation (\ref{eMainTerm}) equals
\[
C_1C_2 2^{-A - 1} \sum_{k = 0}^\infty \frac{(-1)^k \binom{B}{k}}{2^k(A + k + 1)} x (\log x)^{A + B + 1} + O\left(x (\log x)^{A + B + 1 - \delta}\right).
\]
Set
\[
C_3 := C_1C_2 \left(2^{-A - 1} \sum_{k = 0}^\infty \frac{(-1)^k \binom{B}{k}}{2^k(A + k + 1)} + 2^{-B - 1} \sum_{k = 0}^\infty \frac{(-1)^k \binom{A}{k}}{2^k(B + k + 1)}\right).
\]
It remains to show that $C_3 > 0$. But we have the lower bound
\[
\int_1^{\sqrt{x}} \frac{(\log t)^A (\log x - \log t)^B}{t} dt \gg \int_1^{\sqrt{x}} \frac{(\log t)^{A + B}}{t} dt \gg (\log x)^{A + B + 1},
\]
and this completes the proof.
\end{proof}

Finally, we will need the following version of the Siegel--Walfisz theorem.

\begin{theorem}
\label{tCheb}
Let $A > 0$ be a given real number and let $K$ be a fixed number field. Then we have for all $X > 2$, all Galois extensions $L/K$ with $[L : K] < A$ and $N_{K/\Q}(\Delta(L/K)) \leq (\log X)^A$ and all conjugacy classes $C$ of $\Gal(L/K)$
\[
\frac{\#\{\mathfrak{p} \in \Omega_K : N_{K/\Q}(\mathfrak{p}) \leq X, \mathfrak{p} \textup{ unr. in } L, \textup{Frob}_\mathfrak{p} = C\}}{\#\{\mathfrak{p} \in \Omega_K : N_{K/\Q}(\mathfrak{p}) \leq X\}} = \frac{\# C}{\# \Gal(L/K)} \textup{Li}(X) + O\left(\frac{X}{(\log X)^A}\right),
\]
where the implied constant depends only on $A$ and $K$.
\end{theorem}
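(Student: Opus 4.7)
The plan is to deduce this from the effective Chebotarev density theorem in the style of Lagarias--Odlyzko, combined with Siegel's theorem to rule out contributions from exceptional zeros. First I would use character orthogonality on $G := \Gal(L/K)$: writing
\[
\mathbf{1}_C = \frac{\#C}{\#G} \sum_{\chi \in \widehat{G}} \overline{\chi(C)}\, \chi,
\]
the counting function splits as
\[
\pi_C(X, L/K) = \frac{\#C}{\#G} \sum_{\chi \in \widehat{G}} \overline{\chi(C)} \sum_{\substack{N_{K/\Q}\mathfrak{p} \leq X \\ \mathfrak{p} \text{ unr. in } L}} \chi(\textup{Frob}_\mathfrak{p}).
\]
The trivial character contributes $\tfrac{\#C}{\#G} \pi_K(X)$ up to an error of $O(\omega(\Delta(L/K))) = O(\log\log X)$ from ramified primes, and the unconditional prime number theorem for the fixed field $K$ gives $\pi_K(X) = \textup{Li}(X) + O(X\exp(-c\sqrt{\log X}))$. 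This yields the main term.

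For the non-trivial irreducible characters I would invoke Brauer's induction theorem to write each $\chi$ as a $\Z$-linear combination $\chi = \sum_i m_i \operatorname{Ind}_{H_i}^G \psi_i$ of monomial characters, so that the Artin $L$-function factors as
\[
L(s, \chi, L/K) = \prod_i L(s, \psi_i, L/L^{H_i})^{m_i},
\]
a product of honest Hecke $L$-functions. The conductor of each $\psi_i$ divides a fixed power of $\Delta(L/K)$, hence is bounded in norm by a power of $(\log X)^A$. Applying Perron's formula to $\sum \chi(\textup{Frob}_\mathfrak{p}) \log N\mathfrak{p}$ and shifting contours past $\textup{Re}(s) = 1$ in the standard way transforms the task into showing that each $L(s, \psi_i)$ has no zeros in a region of the form $\textup{Re}(s) \geq 1 - \eta(X)$ with $(\log X)\eta(X) \to \infty$ sufficiently fast.

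The classical zero-free region of Hecke $L$-functions (Stark, Lagarias--Odlyzko) provides
\[
L(s, \psi_i) \neq 0 \quad \text{for } \textup{Re}(s) \geq 1 - \frac{c_K}{\log(D_{\psi_i}(|t|+2))},
\]
with the possible exception of a single real Siegel zero. Since $D_{\psi_i} \ll_{A,K} (\log X)^{A[K:\Q]}$, the non-exceptional zeros lie to the left of $1 - c/\log\log X$, producing an error bounded by $X \exp(-c\sqrt{\log X/\log\log X})$, which is $o(X/(\log X)^A)$. For the potential exceptional zero $\beta_0$, Siegel's theorem for Hecke $L$-functions gives, for any $\varepsilon > 0$, an \emph{ineffective} constant $c(\varepsilon, K) > 0$ such that $\beta_0 < 1 - c(\varepsilon, K)\, N(\mathfrak{f}_{\psi_i})^{-\varepsilon}$. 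Choosing $\varepsilon := 1/(2A[K:\Q])$ forces $N(\mathfrak{f}_{\psi_i})^\varepsilon \leq (\log X)^{1/2}$, whence $\beta_0 < 1 - c(\log X)^{-1/2}$ and the exceptional zero contributes at most $O(X\exp(-c\sqrt{\log X}))$ to the character sum. Summing over the $O_A(1)$ irreducible characters completes the estimate.

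The main obstacle is precisely the handling of the exceptional zero: unlike the non-exceptional zero-free region, Siegel's bound is ineffective, so the implied constant in the final estimate is ineffective in $A$ and $K$. This is acceptable since the statement of Theorem \ref{tCheb} asks only for an implied constant depending on $A$ and $K$. A secondary technical point is verifying that the conductor bound $N_{K/\Q}(\Delta(L/K)) \leq (\log X)^A$ translates, via the conductor-discriminant formula and the bound $[L:K] < A$, to the uniform bound $D_{\psi_i} \ll_{A,K} (\log X)^{O_{A,K}(1)}$ used above; this is a routine application of standard estimates for discriminants of towers of number fields.
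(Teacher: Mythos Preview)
Your outline is correct in substance and amounts to reproving, from first principles, the effective Chebotarev theorem that the paper simply quotes. The paper's proof is two lines: it invokes Thorner--Zaman \cite{TZ} as a black box, which already gives the desired estimate with an explicit dependence on a possible exceptional real zero $\beta$ of $\zeta_L(s)$, and then disposes of that zero by the ineffective Brauer--Siegel lower bound $\textup{Res}_{s=1}\zeta_L(s)\gg_{\varepsilon,K}\Delta(L/\Q)^{-\varepsilon}$, converted into a bound on $1-\beta$ via Louboutin \cite{Louboutin}. Your route---orthogonality, Brauer induction to Hecke $L$-functions, Lagarias--Odlyzko zero-free regions, contour shift---is exactly how one proves the input theorem \cite{TZ}, so the two arguments are equivalent at the level of ideas; the paper just outsources the analytic work.

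One point in your write-up deserves care. After Brauer induction the Hecke characters $\psi_i$ live over the intermediate fields $L^{H_i}$, not over $K$, so the Siegel bound you invoke as ``$\beta_0<1-c(\varepsilon,K)N(\mathfrak{f}_{\psi_i})^{-\varepsilon}$'' is not literally available with a constant depending only on $K$: the classical Siegel argument fixes the base field. What saves you is that $[L^{H_i}:\Q]\le A[K:\Q]$ is bounded and all the $L(s,\psi_i)$ divide $\zeta_L(s)$, so any exceptional zero is the (unique, simple, real) exceptional zero of $\zeta_L$, and the required bound follows from Brauer--Siegel applied to $L$ directly---precisely the device the paper uses. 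Phrasing the Siegel step this way closes the gap cleanly and makes the uniformity in $L$ transparent.
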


\begin{proof}
This follows immediately from \cite[Theorem 1.1]{TZ}, were it not for potential Siegel zeroes. To control a potential Siegel zero of $\zeta_L(s)$, we apply the ineffective Brauer--Siegel theorem, which yields for every $\epsilon > 0$
\[
\text{Res}_{s = 1} \zeta_L(s) \gg_{\epsilon, K} \frac{1}{\Delta(L/\Q)^\epsilon}.
\]
Picking $\epsilon$ sufficiently small in terms of $A$ gives the desired lower bound for $1 - \beta$ by \cite[Theorem 1]{Louboutin}.
\end{proof}

\section{Proof of main theorems}
\label{sTheorems}
In this section we prove our three main results in complete generality. Recall that $l_G$ is the smallest prime divisor of $\# G$ and that $I(G)$ is the subset of $g \in G$ with order equal to $l_G$. Set $H(G) := I(G) \cup \{\text{id}\}$. We begin with the generalization of Theorem \ref{Main1}. Recall that
$$
i(G, K) := \frac{\#I(G)}{[K(\zeta_{l_G}) : K]}.
$$ 

\subsection{Proof of Theorem \ref{tUpper} and Theorem \ref{tUpper2}}
In this subsection we prove Theorem \ref{tUpper} and Theorem \ref{tUpper2}.

\begin{theorem} 
\label{mt3: Upper bound general}
Let $G$ be a finite non-trivial nilpotent group and let $K$ be a number field. Then there exists a constant $c \in \mathbb{R}_{>0}$ such that 
$$
\#\{\psi \in \textup{Epi}_{\textup{top.gr.}}(G_K, G) : |N_{K/\mathbb{Q}}(\textup{Disc}(\psi))| \leq X \} \leq c \cdot X^{a(G)} \cdot \log(X)^{i(G, K) - 1}
$$
for all $X \in \mathbb{R}_{> 2}$.
\end{theorem}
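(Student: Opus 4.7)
The plan is to mirror the proof of Theorem~\ref{Main1} while substituting in the generalized parametrization and discriminant formula of Section~\ref{sMain}. First I would combine Proposition~\ref{epi iff cordinate are epi} with Proposition~\ref{It is a parametrization: in general} applied Sylow by Sylow to realize $\textup{Epi}_{\textup{top.gr.}}(G_K, G)$ as a subset of $\textup{Prim}(\mathcal{S}^{G - \{\textup{id}\}})$, and then invoke Proposition~\ref{reading off discriminants: in general plus} to rewrite the free part of the discriminant (away from the finite bad set $S$) as $\prod_{g \in G - \{\textup{id}\}} v_g(2)^{e_g}$ with $e_g := \#G(1 - 1/\#\langle g \rangle)$. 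Since $S$ is fixed and the variables $v_g(1)$ range over a fixed finite set, counting epimorphisms with $|N_{K/\Q}(\textup{Disc}(\psi))| \leq X$ reduces, up to a multiplicative constant, to bounding
\[
\sum_{\substack{(v_g(2))_g \\ \prod N_{K/\Q}(v_g(2))^{e_g} \leq C X}} 1,
\]
where the $v_g(2)$ are pairwise coprime squarefree ideals supported outside $S$, with the prime divisors of $v_g(2)$ constrained by the local residue conditions encoded in $\mathcal{S}$.

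Next, I would split this sum by pulling the $v_g(2)$ with $g \in I(G)$ inside, after dropping the joint coprimality between these and the $v_g(2)$ with $g \in G - H(G)$, which only enlarges the count. Writing $Y := (CX)^{a(G)}/\prod_{g \in G - H(G)} N_{K/\Q}(v_g(2))^{a(G) e_g}$, the inner sum is dominated by
\[
\sum_{\substack{d \subseteq \mathcal{O}_K \\ N_{K/\Q}(d) \leq Y \\ \mathfrak{p} \mid d \,\Rightarrow\, \mathfrak{p} \in \widetilde{\Omega}_K(l_G)}} \mu^2(d) \cdot (\#I(G))^{\omega(d)},
\]
where each tuple $(v_g(2))_{g \in I(G)}$ corresponds to the squarefree ideal $d = \prod_{g \in I(G)} v_g(2)$ together with a labelling of the prime divisors of $d$ by elements of $I(G)$. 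The crucial residue condition $\mathfrak{p} \in \widetilde{\Omega}_K(l_G)$ follows from the definition of $\mathcal{S}$ together with Proposition~\ref{only with local roots of unity} and is equivalent to $N_{K/\Q}(\mathfrak{p}) \equiv 1 \bmod l_G$. Applying Theorem~\ref{tSelbergDelange} with $L = K(\zeta_{l_G})$, with the selected subset of $\Gal(L/K)$ being $\{\textup{id}\}$, and with $z = \#I(G)$ yields a bound of the shape $O(Y (\log Y)^{i(G, K) - 1})$, since $z \cdot 1/[K(\zeta_{l_G}):K] = i(G, K)$. Substituting back and dropping the joint constraint on the outer variables gives
\[
\ll X^{a(G)} (\log X)^{i(G, K) - 1} \prod_{g \in G - H(G)} \sum_{v_g(2)} \frac{\mu^2(v_g(2))}{N_{K/\Q}(v_g(2))^{a(G) e_g}},
\]
and each Euler-type factor converges because $a(G) e_g = \frac{l_G}{l_G - 1}\bigl(1 - 1/\#\langle g \rangle\bigr) > 1$ for every $g \in G - H(G)$, as $\#\langle g \rangle > l_G$ by definition of $H(G)$.

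The main obstacle is not conceptual but bookkeeping: one must verify carefully that the inner sum is genuinely controlled by the single abelian extension $K(\zeta_{l_G})/K$, so that the Chebotarev density $1/[K(\zeta_{l_G}):K]$ arises in exactly the right exponent. This rests on Proposition~\ref{only with local roots of unity}, which cleanly isolates the influence of roots of unity on ramification at order-$l_G$ elements, together with the fact that such ramifying primes are automatically coprime to $l_G$. Once this identification is made, the analytic estimate of Theorem~\ref{tSelbergDelange} supplies exactly $i(G, K) - 1$ as the exponent of the logarithm, matching the statement.
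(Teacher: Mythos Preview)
Your proposal is correct and follows essentially the same route as the paper's own proof: reduce via Propositions~\ref{epi iff cordinate are epi}, \ref{It is a parametrization: in general} and \ref{reading off discriminants: in general plus} to counting tuples of ideals, separate the variables indexed by $I(G)$ from those indexed by $G - H(G)$, bound the inner sum by Theorem~\ref{tSelbergDelange} with $L = K(\zeta_{l_G})$ and $z = \#I(G)$, and then observe that the outer sum converges because $a(G)\,e_g > 1$ for $g \in G - H(G)$. The only cosmetic difference is that the paper first writes the variables Sylow by Sylow before passing to the $(v_g(2))_{g \in G - \{\textup{id}\}}$ coordinates, whereas you work directly in the latter.
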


\begin{proof}
Write 
$$
e_g := \#G \left(1 - \frac{1}{\#\langle g \rangle} \right).
$$
It follows from Proposition \ref{reading off discriminants: in general plus} and Proposition \ref{epi iff cordinate are epi} that it suffices to bound
$$
\#\left\{(v_{g, j}(1), v_{g, j}(2))_{j \in [c], g \in G(l_j) - \{\textup{id}\}} : \left|N_{K/\mathbb{Q}} \left(\prod_{g \in G - \{\text{id}\}} v_g(2)^{e_g}\right)\right| \leq X\right\},
$$
where
\[
v_g(2) := \prod_{\substack{\mathfrak{p} \\ \forall j \in [c] \forall h \in G(l_j) - \{\text{id}\} : \mathfrak{p} \mid v_{h, j}(2) \Leftrightarrow g_j = h}} \mathfrak{p}
\]
with $g = (g_1, \dots, g_c)$. Here the variables $(v_{g, j}(1), v_{g, j}(2))_{g \in G(l_j) - \{\textup{id}\}}$ are in $\text{Prim}(\mathcal{S}_{l_j}^{G(l_j) - \{\text{id}\}})$ for every $j \in [c]$. We now drop the following conditions
\begin{itemize}
\item we drop the condition that $v_{g, j}(2)$ is supported outside $S_{\text{clean}}(l_j)$. We also drop the condition that $v_{g, j}(2)$ is supported in $\widetilde{\Omega}_K(l_j)$ except if $j = 1$ and $g \in I(G)$;
\item we drop the coprimality conditions between the $v_{g, j}(2)$, except that we remember that $v_{g, 1}(2)$ and $v_{g', 1}(2)$ are squarefree and coprime for $g, g' \in I(G)$.
\end{itemize} 
Since there are only finitely many possibilities (depending on $G$ and $K$) for $v_{g, j}(1)$, the above set has size bounded by
\[
\ll_{G, K} \sum_{\left|N_{K/\mathbb{Q}} \left(\prod_{g \in G - \{\text{id}\}} v_g(2)^{e_g}\right)\right| \leq X} 1.
\]
Here the sum runs over variables $v_g(2)$ for $g \in G - \{\text{id}\}$, where 
\begin{itemize}
\item for $g \in G - H(G)$, the variable $v_g(2)$ is an arbitrary integral ideal of $\mathcal{O}_K$;
\item for $g \in I(G)$, the variables $v_g(2)$ are squarefree and supported in $\widetilde{\Omega}_K(l_G)$;
\item for distinct $g, g' \in I(G)$, the variables $v_g(2)$ and $v_{g'}(2)$ are coprime.
\end{itemize}
We pull out the variables $v_g(2)$ for which $g$ is not in $I(G)$. For such variables $v_g(2)$ we know that 
\begin{align}
\label{eLargeVariables}
e_g > \#G \left(1 - \frac{1}{l_G}\right) = a(G)^{-1}.
\end{align}
Hence we get an upper bound
\begin{align}
\label{eSmallVariablesIn}
\ll \hspace{-0.5cm} \sum_{|N_{K/\Q}(\prod_{g \in G - H(G)} v_g(2)^{e_g})| \leq X} \sum_{\substack{|N_{K/\Q}(\prod_{h \in I(G)} v_h(2)^{a(G)^{-1}})| \leq \frac{X}{|N_{K/\Q}(\prod_{g \in G - H(G)} v_g(2)^{e_g})|} \\ v_h(2) \text{ squarefree and pairwise coprime} \\ v_h(2) \text{ supported in } \widetilde{\Omega}_K(l_G)}} \hspace{-1cm} 1.
\end{align}
Letting $I$ be the product of $v_h(2)$ with $h \in I(G)$, we see that $I$ is a squarefree ideal supported in $\widetilde{\Omega}_K(l_G)$. Equivalently, all prime divisors of $I$ split in the extension $K(\zeta_{l_G})/K$. By Theorem \ref{tSelbergDelange}, the inner sum is bounded by
\[
\sum_{\substack{|N_{K/\Q}(I^{a(G)^{-1}})| \leq \frac{X}{|N_{K/\Q}(\prod_{g \in G - H(G)} v_g(2)^{e_g})|} \\ I \text{ supported in } \widetilde{\Omega}_K(l_G)}} \mu^2(I) \# I(G)^{\omega(I)} \ll_{G, K} \frac{X^{a(G)} (\log X)^{i(G, K) - 1}}{|N_{K/\Q}(\prod_{g \in G - H(G)} v_g(2)^{e_g a(G)})|}.
\]
Plugging this back in equation (\ref{eSmallVariablesIn}) yields
\begin{multline*}
X^{a(G)} \cdot \log(X)^{i(G, K) - 1} \sum_{|N_{K/\Q}(\prod_{g \in G - H(G)} v_g(2)^{e_g})| \leq X} \frac{1}{|N_{K/\Q}(\prod_{g \in G - H(G)} v_g(2)^{e_g a(G)})|} \ll \\
X^{a(G)} \cdot \log(X)^{i(G, K) - 1} \prod_{g \in G - H(G)} \left(\sum_{|N_{K/\Q}(v_g(2)^{e_g})| \leq X} \frac{1}{|N_{K/\Q}(v_g(2)^{e_g a(G)})|}\right).
\end{multline*}
The inner sum converges since $e_g a(G) > 1$ by equation (\ref{eLargeVariables}), and this completes the proof.
\end{proof}

Now let $G := G(l_1) \times \dots \times G(l_c)$ be a finite non-trivial nilpotent group such that the elements of $I(G)$ are pairwise commuting. In this case we are going to prove an upper bound matching the prediction of Malle's conjecture. As before we assume that the elements of $\{l_1, \ldots, l_c\}$ are increasingly ordered, so that $l_1$ equals to $l_G$. Under these assumptions $H(G) = I(G) \cup \{\text{id}\}$ is a $\mathbb{F}_{l_1}$ vector space and furthermore characteristic (hence normal) in $G(l_1) \subseteq G$. Write $h(G)$ for the $\mathbb{F}_{l_1}$-dimension of $H(G)$. Observe that $\frac{G}{H(G)}$ naturally acts on $H(G)$ by conjugation. For each $h \in H(G)$ we denote by
$$
\text{Stab}_{\frac{G}{H(G)}}(h)
$$
the stabilizer of $h$ under the above action, which is a subgroup of $\frac{G}{H(G)}$. Observe that
$$
\#\text{Conj}_G(h) = \left[\frac{G}{H(G)} : \text{Stab}_{\frac{G}{H(G)}}(h)\right].
$$

For each $2 \leq j \leq c$, we filter $G(l_j)$ by any admissible sequence
$$
\{(G_{i_j}(l_j), \theta_{i_j})\}_{i_j \in [r_j]}.
$$
Instead for $l_1$ we filter $G(l_1)$ by an admissible sequence 
$$
\{(G_{i_1}(l_1), \theta_{i_1}) \}_{i_1 \in [r_1]}
$$
such that the kernel of the projection map from $G(l_1) = G_{r_1}(l_1)$ to $G_{r_1-h(G)}(l_1)$ coincides with $H(G)$. In other words $H(G)$ equals the subset of vectors in $\mathbb{F}_{l_1}^{r_1}$ with last $r_1 - h(G)$ coordinates equal to $0$. 

Fix now 
$$
\psi \in \text{Epi}_{\text{top.gr.}}(G_K, \frac{G}{H(G)}).
$$
Let us denote by
$$
\textup{Prim}(\mathcal{S}^{G - \{\textup{id}\}})(\textup{solv.})(\psi)
$$
the subset of
$$
\textup{Prim}(\mathcal{S}^{G - \{\textup{id}\}})(\textup{solv.}) 
$$
such that the induced epimorphism to $\frac{G}{H(G)}$, by means of the canonical projection, coincides with $\psi$. Observe that if $v := (v_{g, j}(1),v_{g, j}(2))_{j \in [c], g \in G(l_j)-\{\textup{id}\}} \in \textup{Prim}(\mathcal{S}^{G - \{\textup{id}\}})(\textup{solv.})(\psi)$, and $h \in H(G)$ and $\mathfrak{q} \mid v_{h}(2)$, then $\mathfrak{q}$ is unramified in the $\frac{G}{H(G)}$-extension given by $\psi$. Hence $\psi(\text{Frob}_{\mathfrak{q}})$ is well-defined; we remind the reader that $\text{Frob}_{\mathfrak{q}}$ depends on the choice of the embedding $i_{\mathfrak{q}}$ fixed so far in the paper. 

\begin{proposition} 
\label{contained in product set} 
Notation as immediately above this proposition. Then for each
$$
v := (v_{g, j}(1),v_{g, j}(2))_{j \in [c], g \in G(l_j)-\{\textup{id}\}} \in \textup{Prim}(\mathcal{S}^{G - \{\textup{id}\}})(\textup{solv.})(\psi),
$$
for each $h \in H(G)$ and each $\mathfrak{q} \mid v_{h}(2)$ we have that $\psi(\textup{Frob}_{\mathfrak{q}}) \in \textup{Stab}_{\frac{G}{H(G)}}(h)$.
\end{proposition}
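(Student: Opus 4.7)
The plan is to read off the image of the inertia generator $\sigma_\mathfrak{q}$ under $P_G(v)$ via Proposition \ref{Reading off inertia: in general plus}, then exploit the local tame relation between Frobenius and inertia at $\mathfrak{q}$ to see how $P_G(v)(\textup{Frob}_\mathfrak{q})$ acts on $h$ by conjugation, and finally descend to $G/H(G)$.

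First I would unpack the hypothesis $v \in \textup{Prim}(\mathcal{S}^{G - \{\textup{id}\}})(\textup{solv.})(\psi)$. This says that $P_G(v): G_K \twoheadrightarrow G$ is an epimorphism whose composition with the canonical projection $G \twoheadrightarrow G/H(G)$ is $\psi$. Given $h \in H(G)$ and $\mathfrak{q} \mid v_h(2)$, the definition of $v_h(2)$ in terms of the tuples $(v_{g,j}(2))$ forces $\mathfrak{q}$ to divide only the $l_1$-component variable $v_{h,1}(2)$ corresponding to $h \in G(l_1)$ and no other variable. Proposition \ref{Reading off inertia: in general plus} then yields $P_G(v)(\sigma_\mathfrak{q}) = h$, where $h$ is viewed inside $G$ via the inclusion $G(l_1) \hookrightarrow G$. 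In particular $\mathfrak{q}$ ramifies in the $G$-extension but is unramified in the $G/H(G)$-extension cut out by $\psi$, so $\psi(\textup{Frob}_\mathfrak{q})$ is well-defined.

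Next I would invoke the standard tame ramification relation inside the decomposition group at $\mathfrak{q}$: in the tame quotient of $G_{K_\mathfrak{q}}$ one has $\tilde{\textup{Frob}}_\mathfrak{q} \, \tilde{\sigma}_\mathfrak{q} \, \tilde{\textup{Frob}}_\mathfrak{q}^{-1} = \tilde{\sigma}_\mathfrak{q}^{N(\mathfrak{q})}$. Pushing this through $P_G(v)$ gives
\[
P_G(v)(\textup{Frob}_\mathfrak{q}) \cdot h \cdot P_G(v)(\textup{Frob}_\mathfrak{q})^{-1} = h^{N(\mathfrak{q})}.
\]
Now, because $\mathfrak{q} \mid v_{h,1}(2)$ with $v_{h,1}(2)$ supported in $\widetilde{\Omega}_K(l_1) = \widetilde{\Omega}_K(l_G)$, by construction of that set we have $N(\mathfrak{q}) \equiv 1 \pmod{l_G}$. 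Since $h$ has order $l_G$, it follows that $h^{N(\mathfrak{q})} = h$, and therefore $P_G(v)(\textup{Frob}_\mathfrak{q}) \in C_G(h)$.

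Finally, I would reduce modulo $H(G)$. The standing assumption that the elements of $I(G)$ pairwise commute makes $H(G)$ an abelian subgroup of $G$ and in particular places $H(G)$ inside $C_G(h)$ for every $h \in H(G)$; hence the action of $G/H(G)$ on $H(G)$ by conjugation is well-defined and $\textup{Stab}_{G/H(G)}(h) = C_G(h)/H(G)$. Combining this with the previous step, the image $\psi(\textup{Frob}_\mathfrak{q}) = P_G(v)(\textup{Frob}_\mathfrak{q}) \bmod H(G)$ lies in $C_G(h)/H(G) = \textup{Stab}_{G/H(G)}(h)$, which is the desired conclusion. I do not foresee any real obstacle: the content is exactly the tame commutator identity combined with the fact, built into the definition of $\widetilde{\Omega}_K(l_G)$, that $N(\mathfrak{q}) \equiv 1 \bmod l_G$ for every $\mathfrak{q}$ that can appear in the support of a variable indexed by $H(G)$.
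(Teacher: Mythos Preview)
Your proposal is correct and follows essentially the same route as the paper. The paper's proof simply cites Proposition~\ref{epi iff cordinate are epi} (to reduce to the $l_1$-component) and Proposition~\ref{solvable iff commutative} (whose first part gives $\textup{Frob}_\mathfrak{q} \in \textup{Cent}_G(\sigma_\mathfrak{q})/\langle\sigma_\mathfrak{q}\rangle$); you have unpacked that citation by reproving the relevant part of Proposition~\ref{solvable iff commutative} directly via the tame relation $\textup{Frob}_\mathfrak{q}\,\sigma_\mathfrak{q}\,\textup{Frob}_\mathfrak{q}^{-1} = \sigma_\mathfrak{q}^{N(\mathfrak{q})}$ together with $N(\mathfrak{q}) \equiv 1 \pmod{l_G}$, which is exactly the content behind that proposition.
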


\begin{proof}
This is an immediate consequence of Proposition \ref{epi iff cordinate are epi} and Proposition \ref{solvable iff commutative}.
\end{proof}

As mentioned above $\text{Frob}_\mathfrak{q}$ depends on the choice of embedding $i_\mathfrak{q}$. Unfortunately this means that $\text{Frob}_\mathfrak{q}$ might not be equidistributed (as $\mathfrak{q}$ varies) for some choices of the embeddings $i_\mathfrak{q}$. But since we are free to choose the embeddings as we like, we are able to work around this.

\begin{theorem} 
\label{mt5: right upper bound} 
Let $G$ be a finite non-trivial nilpotent group and let $K$ be a number field. Suppose that all elements of $I(G)$ commute with each other. Then there exists a constant $c \in \mathbb{R}_{>0}$ such that 
$$
\#\{\psi \in \textup{Epi}_{\textup{top.gr.}}(G_K, G): |N_{K/\mathbb{Q}}(\textup{Disc}(\psi))| \leq X \} \leq c \cdot X^{a(G)} \cdot \textup{log}(X)^{b(G, K) - 1}
$$
for all $X \in \mathbb{R}_{> 2}$.
\end{theorem}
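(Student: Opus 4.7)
The plan is to combine the parametrization from Section~\ref{sMain} with the local conditions of Proposition~\ref{contained in product set} and an application of Theorem~\ref{tSelbergDelange} made uniform by Theorem~\ref{tCheb}. By Proposition~\ref{It is a parametrization: in general}, Proposition~\ref{epi iff cordinate are epi} and Proposition~\ref{reading off discriminants: in general plus}, counting epimorphisms $G_K \twoheadrightarrow G$ of relative discriminant at most $X$ reduces, up to a bounded multiplicative constant absorbed at the finite set $S$, to bounding
\[
\# \bigl\{ (v_{g,j}(1), v_{g,j}(2)) \in \textup{Prim}(\mathcal{S}^{G-\{\textup{id}\}})(\textup{solv.}) : \bigl|N_{K/\Q}\bigl(\prod_{g \neq \textup{id}} v_g(2)^{e_g}\bigr)\bigr| \leq X \bigr\},
\]
where $e_g := \#G(1 - 1/\#\langle g\rangle)$. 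Since the elements of $I(G)$ commute, $H(G) = I(G) \cup \{\textup{id}\}$ is an abelian subgroup of $G$. I split the variables into \emph{outer} ones indexed by $g \in G - H(G)$, for which $\#\langle g\rangle > l_G$ forces $a(G) e_g > 1$, and \emph{inner} ones indexed by $h \in I(G)$, for which $a(G) e_h = 1$. Group the counted epimorphisms by the induced $G/H(G)$-epimorphism $\psi$; the outer variables (together with the discrete data at $S$) enumerate such $\psi$ via the parametrization applied to $G/H(G)$, and the inner variables then enumerate lifts of $\psi$ to a full $G$-epimorphism.

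For fixed $\psi$, Proposition~\ref{contained in product set} says that every prime $\mathfrak{q}$ dividing some $v_h(2)$ with $h \in I(G)$ satisfies $\psi(\textup{Frob}_\mathfrak{q}) \in \textup{Stab}_{G/H(G)}(h)$. Setting $Y := X / |N_{K/\Q}(\textup{Disc}(\psi))|$, the count of inner tuples is thus bounded by
\[
\sum_{\substack{I \in \mathcal{I}_K,\ \mu^2(I) = 1 \\ I \text{ supported in } \widetilde{\Omega}_K(l_G) \\ N_{K/\Q}(I) \leq Y^{a(G)}}} \prod_{\mathfrak{p} \mid I} \alpha_\psi(\mathfrak{p}), \qquad \alpha_\psi(\mathfrak{p}) := \# \{ h \in I(G) : \psi(\textup{Frob}_\mathfrak{p}) \in \textup{Stab}_{G/H(G)}(h)\}.
\]
Since $H(G)$ is abelian, the orbit of $h \in I(G)$ under conjugation by $G/H(G)$ coincides with its $G$-conjugacy class, so $\# \textup{Stab}_{G/H(G)}(h) = \#(G/H(G))/\#\textup{Conj}_G(h)$; the quantity $\alpha_\psi(\mathfrak{p})$ is therefore independent of the choice of embedding $i_\mathfrak{p}$ used to define $\textup{Frob}_\mathfrak{p}$. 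By Chebotarev applied to the compositum $L \cdot K(\zeta_{l_G})/K$, where $L$ is the field cut out by $\psi$, the mean value of $\alpha_\psi(\mathfrak{p}) \mathbf{1}[\mathfrak{p} \in \widetilde{\Omega}_K(l_G)]$ over primes equals
\[
\frac{1}{[K(\zeta_{l_G}):K]} \sum_{h \in I(G)} \frac{1}{\#\textup{Conj}_G(h)} \;=\; b(G,K)
\]
by Lemma~\ref{lbGKNil}. Feeding this mean into Theorem~\ref{tSelbergDelange} then gives the inner bound $\ll Y^{a(G)} (\log X)^{b(G,K)-1}$.

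The main obstacle is achieving this inner bound uniformly in $\psi$. Theorem~\ref{tCheb} provides effective Chebotarev estimates strong enough for the Selberg--Delange machinery as long as $|N_{K/\Q}(\textup{Disc}(L \cdot K(\zeta_{l_G})))| \leq (\log X)^A$ for some $A = A(G,K)$ chosen large enough; I would split the range of outer tuples accordingly. For ``small'' $\psi$ the analysis above applies directly, while for ``large'' $\psi$ with $|N_{K/\Q}(\textup{Disc}(\psi))| > (\log X)^{A'}$ (with $A'$ large in terms of $\#I(G)$) the trivial upper bound of Theorem~\ref{mt3: Upper bound general} inside the shrunken range $Y$ delivers a saving of $(\log X)^{-A' a(G)}$ that more than compensates the polylogarithmic discrepancy $i(G,K) - b(G,K)$ in the exponent. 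Finally, summing the inner bounds against $|N_{K/\Q}(\textup{Disc}(\psi))|^{-a(G)}$ over outer tuples converges thanks to the strict inequality $a(G) e_g > 1$ for every $g \in G - H(G)$, exactly the convergence already exploited in the proof of Theorem~\ref{mt3: Upper bound general}, yielding the desired upper bound $c \cdot X^{a(G)} (\log X)^{b(G,K) - 1}$.
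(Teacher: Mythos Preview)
Your overall architecture matches the paper's: parametrize via Propositions~\ref{It is a parametrization: in general}--\ref{reading off discriminants: in general plus}, separate the variables indexed by $G-H(G)$ (outer) from those indexed by $I(G)$ (inner), impose the Frobenius constraint of Proposition~\ref{contained in product set} on inner primes, cut the outer range at a power of $\log X$, handle large outer tuples by the trivial bound of Theorem~\ref{mt3: Upper bound general}, and sum over outer tuples using $a(G)e_g>1$. Your repackaging of the inner count as $\sum_I \mu^2(I)\prod_{\mathfrak p\mid I}\alpha_\psi(\mathfrak p)$ and your observation that $\alpha_\psi(\mathfrak p)$ depends only on the conjugacy class of $\psi(\mathrm{Frob}_{\mathfrak p})$, hence is independent of the chosen embedding $i_{\mathfrak p}$, is in fact tidier than what the paper does: the paper keeps the embeddings fixed throughout the parametrization, notes that for an unlucky choice the Frobenii may fail to equidistribute over individual elements of $G/H(G)$, and then uses Theorem~\ref{tCheb} together with Hoeffding's inequality to argue that \emph{some} global choice of embeddings is ``favorable'' for all small $\psi$ simultaneously.

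The one genuine gap is your appeal to Theorem~\ref{tSelbergDelange} for the inner sum. As stated in the paper that theorem (i) requires the auxiliary extension to be \emph{abelian} over $K$, whereas your Chebotarev field is the $G/H(G)$-extension cut out by $\psi$ composed with $K(\zeta_{l_G})$, which for the groups under consideration is typically non-abelian; and (ii) carries an implied constant depending on $L$, so it is not uniform as $\psi$ ranges over extensions of discriminant up to $(\log X)^{A}$. You flag (ii) and point at Theorem~\ref{tCheb}, but turning that into a uniform Selberg--Delange statement requires both uniform zero-free regions and uniform bounds for the relevant $L$-functions on the shifted contour, neither of which the paper provides. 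The paper circumvents both issues by replacing Selberg--Delange with the Rosser--Iwaniec upper-bound sieve \cite{Coleman}, which for each $h\in I(G)$ separately yields
\[
\sum_{\substack{N_{K/\Q}(I)\le x\\ \mathfrak p\mid I \Rightarrow \mathfrak p\in\widetilde\Omega_K(l_G),\ \psi'(\mathrm{Frob}_{\mathfrak p})\in\mathrm{Stab}_{G/H(G)}(h)}}\mu^2(I)\ \ll_{G,K}\ \frac{x}{\log x}\prod_{\substack{N_{K/\Q}(\mathfrak p)\le x^{A_3}\\ \mathfrak p\in\widetilde\Omega_K(l_G),\ \psi'(\mathrm{Frob}_{\mathfrak p})\in\mathrm{Stab}_{G/H(G)}(h)}}\Bigl(1+\frac{1}{N_{K/\Q}(\mathfrak p)}\Bigr),
\]
with an implied constant depending only on $G,K$; the Euler product is then estimated via Theorem~\ref{tCheb} and partial summation, and the $\#I(G)$ resulting bounds are glued together by Lemma~\ref{lConvolution}. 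Your single-$I$ formulation with weight $\alpha_\psi$ would go through the same way if you swap Theorem~\ref{tSelbergDelange} for a sieve (or Shiu-type) upper bound, whose input is exactly the prime sum you already computed to be $b(G,K)$.
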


\begin{proof}
We start as in the proof of Theorem \ref{mt3: Upper bound general} and we get the same upper bound as in equation (\ref{eSmallVariablesIn}) except that the $v_h(2)$ are now also such that $\mathfrak{q} \mid v_h(2)$ implies $\psi'(\textup{Frob}_{\mathfrak{q}}) \in \textup{Stab}_{\frac{G}{H(G)}}(h)$, where $\psi': G_K \rightarrow G/H(G)$ is the composition of $\psi$ with the quotient map $G \twoheadrightarrow G/H(G)$. More precisely, we see that
\[
\#\{\psi \in \textup{Epi}_{\textup{top.gr.}}(G_K, G): |N_{K/\mathbb{Q}}(\textup{Disc}(\psi))| \leq X \}
\]
is bounded by
\begin{align}
\label{eStart}
\sum_{|N_{K/\Q}(\prod_{g \in G - H(G)} v_g(2)^{e_g})| \leq X} \sum_{\substack{|N_{K/\Q}(\prod_{h \in I(G)} v_h(2)^{a(G)^{-1}})| \leq \frac{X}{|N_{K/\Q}(\prod_{g \in G - H(G)} v_g(2)^{e_g})|} \\ v_h(2) \text{ squarefree and pairwise coprime} \\ v_h(2) \text{ supported in } \widetilde{\Omega}_K(l_G) \\ \mathfrak{p} \mid v_h(2) \Rightarrow \psi'(\text{Frob}_\mathfrak{p}) \in \text{Stab}_{G/H(G)}(h)}} \hspace{-1cm} 1,
\end{align}
where $\psi'$ is the map associated to the tuple $(v_g(1), v_g(2))$ as $g$ runs through the elements that are zero on the coordinates corresponding to $H(G)$ (of course ignoring all tuples $(v_g(1), v_g(2))$ that map to $\bullet$).

We split the sum depending on
\begin{align}
\label{eSmallK}
\left|N_{K/\Q}\left(\prod_{g \in G - H(G)} v_g(2)^{e_g}\right)\right| \leq (\log X)^{A_1}.
\end{align}
If we pick $A_1 > 0$ large enough (depending on $G$ and $K$) , the terms with
\[
\left|N_{K/\Q}\left(\prod_{g \in G - H(G)} v_g(2)^{e_g}\right)\right| > (\log X)^{A_1}
\]
can be bounded as in the proof of Theorem \ref{mt3: Upper bound general}. Hence it remains to bound the terms satisfying equation (\ref{eSmallK}). This implies that $\text{Disc}(\psi') \leq (\log X)^{A_2}$ for a constant $A_2$ depending only on $G$ and $K$. The Rosser--Iwaniec sieve \cite[Lemma 3]{Coleman} gives some $A_3 > 0$ such that
\begin{align}
\label{eSelberg}
\sum_{\substack{N_{K/\Q}(I) \leq X \\ \mathfrak{p} \mid I \Rightarrow \mathfrak{p} \in \widetilde{\Omega}_K(l_G) \\ \mathfrak{p} \mid I \Rightarrow \psi'(\text{Frob}_\mathfrak{p}) \in \text{Stab}_{G/H(G)}(h)}} \mu^2(I) \ll_{G, K} \frac{X}{\log X} \cdot \prod_{\substack{N_{K/\Q}(\mathfrak{p}) \leq X^{A_3} \\ \mathfrak{p} \in \widetilde{\Omega}_K(l_G) \\ \psi'(\text{Frob}_\mathfrak{p}) \in \text{Stab}_{G/H(G)}(h)}} \left(1 + \frac{1}{N_{K/\Q}(\mathfrak{p})}\right).
\end{align}
Let $M$ be the compositum of all the extensions corresponding to a map $\psi': G_K \rightarrow G/H(G)$ with $\text{Disc}(\psi') \leq (\log X)^{A_1}$. Let $\text{Emb}(X)$ be the set of functions $f$ that send a place $\mathfrak{p} \in \Omega_K$ with $N_{K/\Q}(\mathfrak{p}) \leq X$ to a place of $M$ above $\mathfrak{p}$. If a function $f$ is given, it makes sense to speak of $\text{Frob}_\mathfrak{p}$ as an element of $\Gal(M/K)$ and its quotients.

We now call a function $f \in \text{Emb}(X)$ $A_4$-unfavorable in case there exists $\psi': G_K \rightarrow G/H(G)$ with $\text{Disc}(\psi') \leq (\log X)^{A_1}$ and there exists $g \in G/H(G)$ with
\begin{align}
\label{eEmb}
\left|\frac{\#\{\mathfrak{p} \in \widetilde{\Omega}_K(l_G) : N_{K/\Q}(\mathfrak{p}) \leq X, \psi'(\textup{Frob}_\mathfrak{p}) = g\}}{\#\{\mathfrak{p} \in \Omega_K : N_{K/\Q}(\mathfrak{p}) \leq X\}} - \frac{\textup{Li}(X)}{[K(\zeta_{l_G}) : K] \cdot \# \Gal(L/K)}\right| \geq \frac{X}{(\log X)^{A_4}},
\end{align}
where $L$ is the field corresponding to $\psi'$. Recall that $\mathfrak{p} \in \widetilde{\Omega}_K(l_G)$ is equivalent to $\mathfrak{p}$ splitting in $K(\zeta_{l_G})$, except for finitely many bad primes. Now we apply Theorem \ref{tCheb} with a very large $A$. Since the set of $\psi': G_K \rightarrow G/H(G)$ with $\text{Disc}(\psi') \leq (\log X)^{A_1}$ is bounded by $(\log X)^{A_5}$ for some $A_5 > 0$ depending only on $G$ and $K$ (see Theorem \ref{mt3: Upper bound general} for example), it follows from Theorem \ref{tCheb} and Hoeffding's inequality that 
\[
\frac{\#\{f \in \text{Emb}(X) : f \text{ is } A_4\text{-unfavorable}\}}{\#\{f \in \text{Emb}(X)\}}
\]
is small for any fixed $A_4 > 0$. In particular, we can fix one choice of embeddings that is not, say, $100$-unfavorable. It follows from partial summation and equations (\ref{eSelberg}) and (\ref{eEmb}) that
\[
\sum_{\substack{N_{K/\Q}(I) \leq X \\ \mathfrak{p} \mid I \Rightarrow \mathfrak{p} \in \widetilde{\Omega}_K(l_G) \\ \mathfrak{p} \mid I \Rightarrow \psi'(\text{Frob}_\mathfrak{p}) \in \text{Stab}_{G/H(G)}(h)}} \mu^2(I) \ll_{G, K} \frac{X}{(\log X)^{1 - \frac{\# \text{Stab}_{G/H(G)}(h)}{[K(\zeta_{l_G}) : K] \cdot \#(G/H(G))}}}.
\]
We now drop the condition that the $v_h(2)$ are pairwise coprime in equation (\ref{eStart}). The theorem then follows from Lemma \ref{lbGKNil} and repeatedly applying Lemma \ref{lConvolution} to equation (\ref{eStart}).
\end{proof}

\subsection{Poitou--Tate duality}
In order to prove Theorem \ref{tMalle} it will be convenient to pick a favorable choice of the characters $\chi_{\mathfrak{q}}$. In particular we would like to show the following. Let $K$ be a number field, let $l$ be a prime number and let $S_{\text{clean}}(l)$ be as in Section \ref{sMain}. Then there exists an extension $L/K$ such that $\text{Frob}_{L/K}(\mathfrak{q}) = \text{Frob}_{L/K}(\mathfrak{q'})$ implies that the characters $\chi_{\mathfrak{q}}, \chi_{\mathfrak{q}'}: G_K \rightarrow \mathbb{F}_l$ can be chosen in such a way that their restriction to
\[
\bigoplus_{\mathfrak{q} \in S_{\text{clean}}(l)} \frac{H^1(G_{K_{\mathfrak{q}}},\mathbb{F}_l)}{H_{\text{unr}}^1(G_{K_{\mathfrak{q}}},\mathbb{F}_l)}
\]
is the same. Here we recall that $\chi_{\mathfrak{q}}$ is a character satisfying the following two properties: the place $\mathfrak{q}$ ramifies in the field corresponding to $\chi_{\mathfrak{q}}$, and furthermore any other ramified place must be in $S_{\text{clean}}(l)$. We will use the following form of Poitou--Tate duality to achieve our goal. Let us start by giving some background material on Selmer groups.

Let $M$ be a finite, discrete $G_K$-module. We define for each place $v$ the unramified classes to be
\[
H^1_{\text{unr}}(G_{K_v}, M) := \text{ker}\left(H^1(G_{K_v}, M) \rightarrow H^1(G_{K_v^{\text{unr}}}, M)\right)
\]
with $K_v^{\text{unr}}$ the maximal, unramified extension of $K_v$. A Selmer structure for $M$ is then a collection $\mathcal{L} = \{\mathcal{L}_v\}_v$, where each $\mathcal{L}_v$ is a subgroup of $H^1(G_{K_v}, M)$ such that $\mathcal{L}_v = H^1_{\text{unr}}(G_{K_v}, M)$ for all but finitely many places. The associated Selmer group $\text{Sel}_\mathcal{L}(G_K, M)$ is then the kernel of the map
\[
H^1(G_K, M) \rightarrow \prod_{v \in \Omega_K} H^1(G_{K_v}, M)/\mathcal{L}_v.
\]
Define $M^\ast = \text{Hom}(M, \Q/\Z(1))$, where $\Q/\Z(1)$ is the Tate twist of $\Q/\Z$. We have the local Tate pairing
\[
H^1(G_{K_v}, M) \times H^1(G_{K_v}, M^\ast) \rightarrow H^2(G_{K_v}, \Q/\Z(1)) \cong \text{Br}(K_v) \rightarrow \Q/\Z
\]
given by the cup product and the local invariant map. The dual Selmer structure is then defined to be the orthogonal complement of $\mathcal{L}_v$ under the local Tate pairing, which gives subspaces $\{\mathcal{L}_v^\ast\}_v$ of $H^1(G_{K_v}, M^\ast)$. If $v$ does not divide $|M|$ and the inertia group $I_{K_v}$ acts trivially on $M$, then it is known that $H^1_{\text{unr}}(G_{K_v}, M)$ and $H^1_{\text{unr}}(G_{K_v}, M^\ast)$ are orthogonal complements under the local Tate pairing. The dual Selmer group is defined as the kernel of the map
\[
H^1(G_K, M^\ast) \rightarrow \prod_{v \in \Omega_K} H^1(G_{K_v}, M^\ast)/\mathcal{L}_v^\ast.
\]

\begin{theorem}[Poitou--Tate duality]
\label{tPT}
Let $\mathcal{L}$ and $\mathcal{F}$ be Selmer structures such that $\mathcal{L}_v \subseteq \mathcal{F}_v$ for each $v$. Let $\Omega$ be a finite set of places such that $\mathcal{L}_v = \mathcal{F}_v$ for all $v \not \in \Omega$. Then we have exact sequences
\[
0 \rightarrow \textup{Sel}_\mathcal{L}(G_K, M) \rightarrow \textup{Sel}_\mathcal{F}(G_K, M) \rightarrow \bigoplus_{v \in \Omega} \mathcal{F}_v/\mathcal{L}_v
\]
and
\[
0 \rightarrow \textup{Sel}_{\mathcal{F}^\ast}(G_K, M^\ast) \rightarrow \textup{Sel}_{\mathcal{L}^\ast}(G_K, M^\ast) \rightarrow \bigoplus_{v \in \Omega} \mathcal{L}_v^\ast/\mathcal{F}_v^\ast.
\]
Now consider the pairing
\[
\bigoplus_{v \in \Omega} \mathcal{F}_v/\mathcal{L}_v \times \bigoplus_{v \in \Omega} \mathcal{L}_v^\ast/\mathcal{F}_v^\ast \rightarrow \Q/\Z,
\]
which is by definition the sum of the local Tate pairings at each $v \in \Omega$. Then the images of $\textup{Sel}_\mathcal{F}(G_K, M)$ in $\oplus_{v \in \Omega} \mathcal{F}_v/\mathcal{L}_v$ and $\textup{Sel}_{\mathcal{L}^\ast}(G_K, M^\ast)$ in $\oplus_{v \in \Omega} \mathcal{L}_v^\ast/\mathcal{F}_v^\ast$ are orthogonal complements.
\end{theorem}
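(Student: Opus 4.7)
The plan is to dispatch the two exact sequences by elementary diagram chases, then establish one direction of the orthogonality from global reciprocity, and finally invoke the classical Poitou--Tate nine-term exact sequence to obtain the reverse inclusion.

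For the first exact sequence, I would observe that $\textup{Sel}_\mathcal{L}(G_K, M) \subseteq \textup{Sel}_\mathcal{F}(G_K, M)$ holds because $\mathcal{L}_v \subseteq \mathcal{F}_v$ at every $v$. Given $x \in \textup{Sel}_\mathcal{F}(G_K, M)$, its local components $x_v$ automatically lie in $\mathcal{L}_v$ for every $v \notin \Omega$, since $\mathcal{L}_v = \mathcal{F}_v$ there. Hence the only obstruction for $x$ to belong to $\textup{Sel}_\mathcal{L}$ is its image in $\bigoplus_{v \in \Omega} \mathcal{F}_v/\mathcal{L}_v$, which gives exactness of the first sequence. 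The second sequence is obtained by the identical argument applied to $M^\ast$, using that taking orthogonal complements under a perfect pairing reverses inclusions, so $\mathcal{F}_v^\ast \subseteq \mathcal{L}_v^\ast$ with equality outside $\Omega$.

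For the orthogonality, I would first show that the image of $\textup{Sel}_\mathcal{F}(G_K, M)$ annihilates the image of $\textup{Sel}_{\mathcal{L}^\ast}(G_K, M^\ast)$. This is a direct consequence of global reciprocity: for $x \in \textup{Sel}_\mathcal{F}(G_K, M)$ and $y \in \textup{Sel}_{\mathcal{L}^\ast}(G_K, M^\ast)$, the sum of local invariants
\[
\sum_{v} \langle x_v, y_v \rangle_v \in \mathbb{Q}/\mathbb{Z}
\]
vanishes by the Hasse principle for the Brauer group. For places $v \notin \Omega$ one has $x_v \in \mathcal{L}_v$ and $y_v \in \mathcal{L}_v^\ast$, so the local pairing vanishes by definition of $\mathcal{L}_v^\ast$ as orthogonal complement of $\mathcal{L}_v$. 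The total sum therefore reduces to the sum over $v \in \Omega$, yielding the required annihilation after noting that the local pairings descend to well-defined pairings on the quotients $\mathcal{F}_v/\mathcal{L}_v \times \mathcal{L}_v^\ast/\mathcal{F}_v^\ast$.

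The hard step is the reverse inclusion: any tuple in $\bigoplus_{v \in \Omega} \mathcal{F}_v/\mathcal{L}_v$ annihilating the image of $\textup{Sel}_{\mathcal{L}^\ast}(G_K, M^\ast)$ must actually lie in the image of $\textup{Sel}_\mathcal{F}(G_K, M)$. For this I would invoke the Poitou--Tate nine-term exact sequence (e.g.\ Neukirch--Schmidt--Wingberg, \emph{Cohomology of Number Fields}, Theorem 8.6.10), which yields exactness of
\[
H^1(G_K, M) \xrightarrow{\mathrm{loc}} {\prod}^\prime_{v} H^1(G_{K_v}, M) \longrightarrow H^1(G_K, M^\ast)^\vee,
\]
where the primed product is restricted with respect to unramified classes and the second arrow is dual to the global-to-local map for $M^\ast$ under the sum of local Tate pairings. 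Given a tuple satisfying the annihilation hypothesis, one lifts it arbitrarily to an element of $\bigoplus_{v \in \Omega}\mathcal{F}_v$ and extends by zero at places outside $\Omega$; the hypothesis translates into vanishing in $H^1(G_K, M^\ast)^\vee$, so exactness produces a global class in $H^1(G_K, M)$ whose localization realizes the prescribed tuple modulo $\mathcal{L}_v$ at $v \in \Omega$ and is unramified (hence in $\mathcal{F}_v = \mathcal{L}_v$) elsewhere. This class therefore lies in $\textup{Sel}_\mathcal{F}(G_K, M)$ and maps to the given tuple. The main obstacle is precisely this appeal to the nine-term sequence, whose proof rests on substantial input from global class field theory, notably the Brauer--Hasse--Noether theorem.
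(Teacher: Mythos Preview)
The paper itself does not prove this theorem: its entire proof is the one-line citation ``See \cite[Theorem 2.3.4]{MR}.'' So you are attempting more than the paper does, and your treatment of the two exact sequences and of the easy direction of the orthogonality (via the reciprocity law $\sum_v \mathrm{inv}_v = 0$) is correct.

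The hard step, however, has a genuine gap. You lift the tuple to $(\beta_v)_{v\in\Omega}$ with $\beta_v \in \mathcal{F}_v$, extend by zero outside $\Omega$, and then assert that ``the hypothesis translates into vanishing in $H^1(G_K,M^\ast)^\vee$.'' But the map in the nine-term sequence sends $(\beta_v)_v$ to the functional $\gamma \mapsto \sum_v \langle \beta_v, \gamma_v\rangle_v$ on \emph{all} of $H^1(G_K,M^\ast)$, while your annihilation hypothesis only controls $\gamma \in \textup{Sel}_{\mathcal{L}^\ast}(G_K,M^\ast)$. A global class $\gamma$ that violates the condition $\gamma_v \in \mathcal{L}_v^\ast$ at some place (for instance a place outside $\Omega$ where $\mathcal{L}_v = H^1_{\mathrm{unr}}$ but $\gamma$ is ramified) is not covered, and there is no reason for $\sum_{v\in\Omega}\langle \beta_v,\gamma_v\rangle_v$ to vanish for such $\gamma$. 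So the nine-term sequence, applied as you describe, does not produce the desired global class.

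The usual repairs are: (i) a cardinality count via the Greenberg--Wiles formula, which yields
\[
\frac{\#\textup{Sel}_\mathcal{F}(G_K,M)\cdot \#\textup{Sel}_{\mathcal{L}^\ast}(G_K,M^\ast)}{\#\textup{Sel}_\mathcal{L}(G_K,M)\cdot \#\textup{Sel}_{\mathcal{F}^\ast}(G_K,M^\ast)} \;=\; \prod_{v\in\Omega}\#(\mathcal{F}_v/\mathcal{L}_v),
\]
forcing the inclusion $\mathrm{im}(\textup{Sel}_\mathcal{F}) \subseteq \mathrm{im}(\textup{Sel}_{\mathcal{L}^\ast})^\perp$ to be an equality; or (ii) first establishing the special case where $\mathcal{L}_v = 0$ and $\mathcal{F}_v = H^1(G_{K_v},M)$ on a finite set $S \supseteq \Omega$ containing all bad places (this case \emph{is} exactly the $G_{K,S}$ Poitou--Tate sequence), and then bootstrapping to arbitrary $\mathcal{L}\subseteq\mathcal{F}$ by a diagram chase. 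The reference \cite{MR} takes route (i).
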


\begin{proof}
See \cite[Theorem 2.3.4]{MR}.
\end{proof}

We remark that it is only the last part of the theorem that is deep. With this theorem in hand, it is now easy to control the local behavior at the places in $S_{\text{clean}}(l)$.

\begin{theorem}
\label{tCharLoc}
There exists an extension $L/K$ with the following property. Take any two primes $\mathfrak{p}, \mathfrak{p}' \in \widetilde{\Omega}_K(l)$. Assume that 
\[
\textup{Frob}_{L/K}(\mathfrak{p}) = \textup{Frob}_{L/K}(\mathfrak{p'}). 
\]
Then we can choose characters $\chi_{\mathfrak{p}}$ and $\chi_{\mathfrak{p}'}$ such that they have the same restriction to 
\[
\bigoplus_{\mathfrak{q} \in S_{\textup{clean}}(l)} \frac{H^1(G_{K_{\mathfrak{q}}}, \mathbb{F}_l)}{H_{\textup{unr}}^1(G_{K_{\mathfrak{q}}}, \mathbb{F}_l)}.
\]
\end{theorem}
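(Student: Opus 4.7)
The plan is to combine Poitou--Tate duality (Theorem \ref{tPT}) with global reciprocity. I introduce two Selmer structures for $M = \mathbb{F}_l$ (so $M^\ast = \mu_l$): the unramified structure $\mathcal{L}_v = H^1_{\text{unr}}(G_{K_v}, \mathbb{F}_l)$ at every place $v$, and the structure $\mathcal{F}$ coinciding with $\mathcal{L}$ outside $S_{\text{clean}}(l)$ but equal to the full $H^1(G_{K_v}, \mathbb{F}_l)$ for $v \in S_{\text{clean}}(l)$. Thus $\text{Sel}_{\mathcal{F}}(G_K, \mathbb{F}_l)$ is the group of $\mathbb{F}_l$-characters of $G_K$ unramified outside $S_{\text{clean}}(l)$. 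The dual Selmer group $\text{Sel}_{\mathcal{L}^\ast}(G_K, \mu_l)$, consisting of everywhere-unramified $\mu_l$-characters, is finite by class field theory (along the lines of Proposition \ref{finite kernel cokernel}). I take $L$ to be the finite Galois extension of $K$ fixed by the intersection of the kernels of all such $\eta$; then for every $\mathfrak{p} \in \widetilde{\Omega}_K(l)$, the value $\eta(\text{Frob}_\mathfrak{p})$ depends only on $\text{Frob}_{L/K}(\mathfrak{p})$.

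Set $V := \bigoplus_{\mathfrak{q} \in S_{\text{clean}}(l)} H^1(G_{K_\mathfrak{q}}, \mathbb{F}_l)/H^1_{\text{unr}}(G_{K_\mathfrak{q}}, \mathbb{F}_l)$ and let $\text{res}_S$ denote the restriction-and-quotient map. Modifying $\chi_{\mathfrak{p}}$ by any element of $\text{Sel}_{\mathcal{F}}(G_K, \mathbb{F}_l)$ (which is unramified at $\mathfrak{p}$) preserves the defining properties, while changing $\text{res}_S(\chi_\mathfrak{p})$ by an arbitrary element of $\text{res}_S(\text{Sel}_{\mathcal{F}})$. Hence the theorem reduces to
\[
\text{res}_S(\chi_\mathfrak{p}) - \text{res}_S(\chi_{\mathfrak{p}'}) \in \text{res}_S\bigl(\text{Sel}_{\mathcal{F}}(G_K, \mathbb{F}_l)\bigr).
\]
Applying Theorem \ref{tPT} to $\mathcal{L} \subseteq \mathcal{F}$ with $\Omega = S_{\text{clean}}(l)$ identifies $\text{res}_S(\text{Sel}_{\mathcal{F}})$ as the orthogonal complement in $V$, under the sum of local Tate pairings, of the image of $\text{Sel}_{\mathcal{L}^\ast}$ in $\bigoplus_{\mathfrak{q} \in S_{\text{clean}}(l)} H^1_{\text{unr}}(G_{K_\mathfrak{q}}, \mu_l)$. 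The task therefore reduces to verifying
\[
\sum_{\mathfrak{q} \in S_{\text{clean}}(l)} \langle \chi_\mathfrak{p}, \eta \rangle_\mathfrak{q} = \sum_{\mathfrak{q} \in S_{\text{clean}}(l)} \langle \chi_{\mathfrak{p}'}, \eta \rangle_\mathfrak{q} \qquad \text{for all } \eta \in \text{Sel}_{\mathcal{L}^\ast}.
\]

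At this point I invoke global reciprocity $\sum_v \langle \chi_\mathfrak{p}, \eta \rangle_v = 0$. At every finite place $v \notin S_{\text{clean}}(l) \cup \{\mathfrak{p}\}$ both cochains are unramified and $v \nmid l$, so the pairing vanishes by the standard orthogonality of unramified subspaces in the tame setting; the archimedean places belong to $S_{\text{clean}}(l)$. Consequently $\sum_{\mathfrak{q} \in S_{\text{clean}}(l)} \langle \chi_\mathfrak{p}, \eta \rangle_\mathfrak{q} = -\langle \chi_\mathfrak{p}, \eta \rangle_\mathfrak{p}$, and analogously for $\chi_{\mathfrak{p}'}$, collapsing the problem to the single local identity $\langle \chi_\mathfrak{p}, \eta \rangle_\mathfrak{p} = \langle \chi_{\mathfrak{p}'}, \eta \rangle_{\mathfrak{p}'}$.

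The main obstacle is this local cup-product computation at $\mathfrak{p} \in \widetilde{\Omega}_K(l)$. Since $\mu_l \subseteq K_\mathfrak{p}$ and $\mathfrak{p}$ is tame, the local Tate pairing descends to a perfect duality $H^1/H^1_{\text{unr}} \times H^1_{\text{unr}}(\mu_l) \to \mathbb{F}_l$. The class of $\chi_\mathfrak{p}|_\mathfrak{p}$ in the quotient is pinned down by the normalization $\chi_\mathfrak{p}(\sigma_\mathfrak{p}) = 1$ from Section \ref{sMain}, while $\eta|_\mathfrak{p}$ is pinned down by $\eta(\text{Frob}_\mathfrak{p})$. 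An explicit tame-symbol computation then yields $\langle \chi_\mathfrak{p}, \eta \rangle_\mathfrak{p} = c \cdot \eta(\text{Frob}_\mathfrak{p})$ for a nonzero constant $c$ independent of $\mathfrak{p}$, provided the embeddings $i_\mathfrak{p}$ and the generators $\sigma_\mathfrak{p}, \text{Frob}_\mathfrak{p}$ are selected compatibly across $\widetilde{\Omega}_K(l)$ with respect to a fixed identification $\mu_l \cong \mathbb{F}_l$ (rescaling the $\chi_\mathfrak{p}$ if needed). Pinning down this uniform normalization is the delicate step; once it is in place, $\text{Frob}_{L/K}(\mathfrak{p}) = \text{Frob}_{L/K}(\mathfrak{p}')$ immediately gives $\eta(\text{Frob}_\mathfrak{p}) = \eta(\text{Frob}_{\mathfrak{p}'})$, and the theorem follows.
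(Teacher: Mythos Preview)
Your approach is essentially the paper's: the same module $M=\mathbb{F}_l$, the same everywhere-unramified Selmer structure $\mathcal{L}$, the same dual Selmer group $\text{Sel}_{\mathcal{L}^*}(G_K,\mu_l)$, and the same governing field $L$ built from it. (A minor imprecision: a $\mu_l$-cocycle is not a homomorphism when $\zeta_l\notin K$, so ``the intersection of the kernels of all such $\eta$'' only makes literal sense after adjoining $\zeta_l$; the paper writes $L=K(\zeta_l,\{\sqrt[l]{\alpha}:\alpha\in\text{Sel}_{\mathcal{L}^*}\})$ explicitly.) The one organizational difference is that the paper enlarges $\Omega$ to $S_{\text{clean}}(l)\cup\{\mathfrak{p}\}$ and applies Poitou--Tate there directly, whereas you keep $\Omega=S_{\text{clean}}(l)$ and invoke global reciprocity separately to push the pairing out to $\mathfrak{p}$; these are equivalent moves.

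The gap is in your last paragraph. You assert that $\langle\chi_\mathfrak{p},\eta\rangle_\mathfrak{p}=c\cdot\eta(\text{Frob}_\mathfrak{p})$ with $c$ \emph{independent of} $\mathfrak{p}$, contingent on a uniform normalization of $\sigma_\mathfrak{p}$, $\text{Frob}_\mathfrak{p}$, and $i_\mathfrak{p}$ that you call ``delicate'' and do not carry out. The paper avoids this entirely. It simply observes that $\text{inv}_\mathfrak{p}(x_\mathfrak{p}\cup\alpha)=0$ if and only if $\mathfrak{p}$ splits completely in $K(\zeta_l,\sqrt[l]{\alpha})$, a condition visibly depending only on $\text{Frob}_{L/K}(\mathfrak{p})$. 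Hence the two linear functionals $\varphi_\mathfrak{p},\varphi_{\mathfrak{p}'}\colon\text{Sel}_{\mathcal{L}^*}\to\tfrac{1}{l}\Z/\Z$ have the \emph{same kernel}, so $\varphi_\mathfrak{p}=\lambda\varphi_{\mathfrak{p}'}$ for some $\lambda\in\mathbb{F}_l^*$. Replacing $\chi_{\mathfrak{p}'}$ by its $\lambda$-multiple---still a valid choice in the sense of the Remark following the theorem---then finishes the argument. This is precisely the rescaling you allude to parenthetically; the kernel observation is what makes the existence of $\lambda$ immediate, with no cross-prime normalization required.
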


\begin{remark}
By a choice for $\chi_{\mathfrak{p}}$ we mean a character from $G_K$ to $\mathbb{F}_l$ that is ramified at $\mathfrak{p}$, and unramified at all other places except possibly for those in $S_{\textup{clean}}(l)$.
\end{remark}

\begin{proof}
We apply Theorem \ref{tPT} as follows. We take $M = \mathbb{F}_l$ so that $M^\ast \cong \langle \zeta_l \rangle$. For now fix a finite place $w$ outside $S_{\textup{clean}}(l)$ but in $\widetilde{\Omega}_K(l)$. We take $\mathcal{L} = \{\mathcal{L}_v\}_v$ with $\mathcal{L}_v = H^1_{\text{unr}}(G_{K_v}, M)$ for all places $v$. Furthermore, we take $\mathcal{F}^w = \{\mathcal{F}_v^w\}_v$ with
\[
\mathcal{F}_v^w =
\left\{
\begin{array}{ll}
H^1(G_{K_v}, M)  & \mbox{if } v \in S_{\textup{clean}}(l) \cup \{w\} \\
H^1_{\text{unr}}(G_{K_v}, M) & \mbox{otherwise.}
\end{array}
\right.
\]
Finally, we take $\Omega$ to be the union of $S_{\textup{clean}}(l)$ with $\{w\}$. 

By Kummer theory we know that $H^1(G_K, M^\ast)$ can be identified with $K^\ast/K^{\ast l}$. A computation then shows that $\text{Sel}_{\mathcal{L}^\ast}(G_K, M^\ast)$ is given by the elements $\alpha \in K^\ast/K^{\ast l}$ that have valuation divisible by $l$ at all finite places $v$. Now take the field $L$ to be
\[
L := K(\zeta_l, \{\sqrt[l]{\alpha} : \alpha \in \text{Sel}_{\mathcal{L}^\ast}(G_K, M^\ast)\}).
\]
Suppose now that we are given two primes $\mathfrak{p}, \mathfrak{p}' \in \widetilde{\Omega}_K(l)$ with
\begin{align}
\label{eEqualFrob}
\textup{Frob}_{L/K}(\mathfrak{p}) = \textup{Frob}_{L/K}(\mathfrak{p'}).
\end{align}
Take two non-zero elements
\[
x_{\mathfrak{p}} \in \frac{H^1(G_{K_{\mathfrak{p}}}, \mathbb{F}_l)}{H_{\textup{unr}}^1(G_{K_{\mathfrak{p}}}, \mathbb{F}_l)}, \quad x_{\mathfrak{p}'} \in \frac{H^1(G_{K_{\mathfrak{p}'}}, \mathbb{F}_l)}{H_{\textup{unr}}^1(G_{K_{\mathfrak{p}'}}, \mathbb{F}_l)}.
\]
Observe that any character $\chi \in \text{Sel}_{\mathcal{F}^\mathfrak{p}}(G_K, M)$ restricting to $x_{\mathfrak{p}}$ is a valid choice of $\chi_{\mathfrak{p}}$, and similarly for $x_{\mathfrak{p}'}$ and $\chi_{\mathfrak{p}'}$. By construction of $S_{\textup{clean}}(l)$ we can find some 
\[
y \in \bigoplus_{\mathfrak{q} \in S_{\textup{clean}}(l)} \frac{H^1(G_{K_{\mathfrak{q}}}, \mathbb{F}_l)}{H_{\textup{unr}}^1(G_{K_{\mathfrak{q}}}, \mathbb{F}_l)}
\]
such that the pair $(x_\mathfrak{p}, y)$ is in the image of $\text{Sel}_{\mathcal{F}^{\mathfrak{p}}}(G_K, M)$. To complete the proof, we will show that there exists $\lambda \in \mathbb{F}_l^\ast$ such that $(\lambda x_{\mathfrak{p}'}, y)$ is in the image of $\text{Sel}_{\mathcal{F}^{\mathfrak{p}'}}(G_K, M)$. Consider the linear functionals $\varphi_\mathfrak{p}, \varphi_{\mathfrak{p}'} : \text{Sel}_{\mathcal{L}^\ast}(G_K, M^\ast) \rightarrow (\frac{1}{l}\Z_l/\Z_l)^n$ given respectively by
\[
\alpha \mapsto \text{inv}_\mathfrak{p}(x_\mathfrak{p} \cup \text{res}_\mathfrak{p}(\alpha)), \quad \alpha \mapsto \text{inv}_{\mathfrak{p}'}(x_{\mathfrak{p}'} \cup \text{res}_{\mathfrak{p}'}(\alpha)),
\]
where $\text{res}$ denotes the natural restriction map. We claim that $\text{ker}(\varphi_\mathfrak{p}) = \text{ker}(\varphi_{\mathfrak{p}'})$. But indeed, this follows from equation (\ref{eEqualFrob}) and
\[
\text{inv}_\mathfrak{p}(x_\mathfrak{p} \cup \text{res}_\mathfrak{p}(\alpha)) = 0 \Longleftrightarrow \mathfrak{p} \text{ splits completely in } K(\zeta_l, \sqrt[l]{\alpha}).
\]
We observe that $\text{ker}(\varphi_\mathfrak{p}) = \text{ker}(\varphi_{\mathfrak{p}'})$ implies that there exists $\lambda \in \mathbb{F}_l^\ast$ such that $\varphi_\mathfrak{p} = \lambda \varphi_{\mathfrak{p}'}$. Finally, recall that
\[
\text{inv}_\mathfrak{p}(x_\mathfrak{p} \cup \text{res}_\mathfrak{p}(\alpha))
\]
is also the local Tate pairing of $x_\mathfrak{p}$ with $\alpha$. The theorem now follows from Poitou--Tate duality and the fact that $\varphi_\mathfrak{p} = \lambda \varphi_{\mathfrak{p}'}$.
\end{proof}

\subsection{Proof of Theorem \ref{tMalle}}
Recall that the quantities $i(G, K)$ and $b(G, K)$ coincide if and only if $I(G)$ is entirely contained in the center of $G$. In this subsection we shall establish an asymptotic for $N(G, K, X)$ for such groups $G$. 

We start by generalizing Proposition \ref{Product structure}. Recall that for a finite group we denote by $Z(G)$ the center of $G$. Let now $G := G(l_1) \times \dots \times G(l_c)$ be a finite non-trivial group with each $G(l_i)$ an $l_i$-group. We assume that $I(G) \subseteq Z(G)$ and we order the primes $l_1, \dots, l_c$ such that $l_1 < \dots < l_c$, so that $l_1 = l_G$. 

In particular we see that $H(G) := I(G) \cup \{\text{id}\}$ is a vector space over $\mathbb{F}_{l_1}$, we denote by $h(G)$ its dimension. For each $2 \leq j \leq c$, we filter $G(l_j)$ by any admissible sequence
$$
\{(G_{i_j}(l_j), \theta_{i_j}) \}_{i_j \in [r_j]}.
$$
Instead for $l_1$ we filter $G(l_1)$ by an admissible sequence
$$
\{(G_{i_1}(l_1), \theta_{i_1}) \}_{i_1 \in [r_1]}
$$
such that the kernel of the projection map from $G(l_1) = G_{r_1}(l_1)$ to $G_{r_1 - h(G)}(l_1)$ coincides with $H(G)$. In other words $H(G)$ equals the subset of vectors in $\mathbb{F}_{l_1}^{r_1}$ with last $r_1 - h(G)$ coordinates equal to $0$. Let us denote by
$$
\text{Prim}(\mathcal{S}^{G - H(G)})(\text{solv.}) \subseteq \text{Prim}(\mathcal{S}_{l_1}^{G(l_1) - H(G)}) \times \prod_{2 \leq j \leq c} \text{Prim}(\mathcal{S}_{l_j}^{G(l_j) - \{\text{id}\}})
$$
the image of the projection map $\pi$ from $\text{Prim}(\mathcal{S}^{G - \{\text{id}\}})(\text{solv.})$ that drops the coordinates in $H(G)$. We also denote by $\left(\prod_{j \in [c]} \textup{Prim} \left(\mathcal{S}_{l_j}^{G(l_j) - \{\textup{id}\}} \right)\right)^{\circ}$ the subset of vectors having non-trivial coordinate in $H(G)$. 

\begin{proposition} 
\label{Product structure plus} 
We have
\begin{multline*}
\textup{Prim}(\mathcal{S}^{G - \{\textup{id}\}})(\textup{solv.}) \supseteq \\
\left(\textup{Prim} \left(\mathcal{S}_{l_1}^{H(G) - \{\textup{id}\}} \right) \times \textup{Prim}(\mathcal{S}^{G-H(G)})(\textup{solv.})\right) \bigcap \left(\prod_{j \in [c]} \textup{Prim} \left(\mathcal{S}_{l_j}^{G(l_j) - \{\textup{id}\}} \right)\right)^{\circ}.
\end{multline*}
Furthermore, we have
$$
\textup{Prim}(\mathcal{S}^{G - \{\textup{id}\}})(\textup{solv.}) \subseteq (\textup{Prim}(\mathcal{S}_{l_1}^{H(G) - \{\textup{id}\}}) \times \textup{Prim}(\mathcal{S}^{G - H(G)})(\textup{solv.})) \cap \prod_{j \in [c]} \textup{Prim} \left(\mathcal{S}_{l_j}^{G(l_j) - \{\textup{id}\}} \right).
$$
\end{proposition}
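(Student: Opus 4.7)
The plan is to mirror Proposition \ref{Product structure}, first reducing along the Sylow decomposition of $G$ via Proposition \ref{epi iff cordinate are epi}. The second (upper) inclusion will be essentially tautological: for $(v_g) \in \textup{Prim}(\mathcal{S}^{G-\{\textup{id}\}})(\textup{solv.})$, restriction to the $G - H(G)$ coordinates yields an element of $\textup{Prim}(\mathcal{S}^{G-H(G)})(\textup{solv.})$ by the very definition of the latter, while restriction to the $H(G) - \{\textup{id}\}$ coordinates is primitive (since primitivity passes to sub-tuples) and lies in $\mathcal{S}_{l_1}^{H(G) - \{\textup{id}\}}$ because $H(G) \subseteq G(l_1)$.

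For the first (lower) inclusion, I would split the problem along the Sylow factors of $G$. Since $H(G) \cap G(l_j) = \{\textup{id}\}$ for each $j \geq 2$, at these primes the assembled tuple agrees with the one prescribed by the given element of $\textup{Prim}(\mathcal{S}^{G-H(G)})(\textup{solv.})$ and so is already solvable. This reduces matters to the analog of Proposition \ref{Product structure} for the $l_1$-group $G(l_1)$ over the number field $K$, namely: an $H(G)$-extension $(y'_h)$ of a given solvable tuple on $G(l_1) - H(G)$, obeying primitivity and $y'_h \neq 1$ for all $h \in H(G) - \{\textup{id}\}$, is again solvable.

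This final step is a verbatim translation of the original argument, obtained by substituting $\mathcal{G}^{\textup{pro}-l_1}_K$ for $\mathcal{G}^{\textup{pro}-2}_{\mathbb{Q}}$ and invoking Proposition \ref{defining normalized phi: in general} in place of Proposition \ref{defining normalized phi}. The crucial input $H(G) \subseteq Z(G(l_1))$, inherited from $H(G) \subseteq Z(G)$, ensures that the pointwise product $\chi \cdot \psi$ defines a homomorphism, where $\chi \colon \mathcal{G}^{\textup{pro}-l_1}_K \to H(G)$ is associated to $(y'_h)$ via $\textup{Pow}_{l_1}$ and $\psi = P_{G(l_1)}$ applied to the $l_1$-component of the given solvable tuple. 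Surjectivity modulo the Frattini subgroup is then checked by the same character dichotomy as in Proposition \ref{Product structure}: those characters $\chi'$ of $G(l_1)$ trivial on $H(G)$ are detected through $\pi_{\textup{mod } H(G)} \circ \psi$; those non-trivial on $H(G)$ are handled by combining the injectivity of $\chi^{\ast}$ (guaranteed by $y'_h \neq 1$) with the linear disjointness $V \cap W = \{0\}$ (guaranteed by the primitivity between $H(G)$ and $G(l_1) - H(G)$).

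The main technical wrinkle, beyond simply rewriting Proposition \ref{Product structure}, is the bookkeeping imposed by the multi-prime setting: one must verify that the character $\chi$ built from the $H(G)$-data does not interfere with the already-fixed coordinates at the other Sylows or at the primes of $S_{\textup{clean}}(l_1)$. This is automatic, since $\chi$ takes values in the $l_1$-group $H(G)$ and is ramified only at the support of $(y'_h)$, which is coprime to the supports of every other component of the tuple by definition of $\textup{Prim}(\mathcal{S}^{G-\{\textup{id}\}})$.
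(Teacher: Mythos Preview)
Your proposal is correct and takes essentially the same approach as the paper, whose proof consists of the single sentence ``The proof is identical to the one given for Proposition \ref{Product structure}.'' You have simply spelled out how that identification works across the Sylow decomposition and the passage from $\mathbb{F}_2$ to $\mathbb{F}_{l_1}$, which is exactly what is needed.
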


\begin{proof}
The proof is identical to the one given for Proposition \ref{Product structure}. 
\end{proof}

We are now ready to prove our main theorem.

\begin{theorem} 
\label{mt4: Asymptotic}
Let $G$ be a finite non-trivial nilpotent group such that $I(G)$ is entirely contained in the center of $G$. Then there exists a constant $c > 0$ such that
$$ 
\#\{\psi \in \textup{Epi}_{\textup{top.gr.}}(G_K, G) : |N_{K/\mathbb{Q}}(\textup{Disc}(\psi))| \leq X \} \sim c \cdot X^{a(G)} \cdot \textup{log}(X)^{b(G, K) - 1}.
$$
\end{theorem}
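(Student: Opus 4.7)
The plan is to generalize the proof of Theorem \ref{Main2} by replacing the ad hoc mod-$8$ splitting over $\mathbb{Q}$ with the Poitou--Tate cleanup of Theorem \ref{tCharLoc} and the elementary squarefree count with the Selberg--Delange machinery of Section \ref{sAna}.

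First I would invoke Proposition \ref{It is a parametrization: in general} (bundled as in Proposition \ref{epi iff cordinate are epi}) together with Proposition \ref{reading off discriminants: in general plus} to reduce the counting to tuples $(v_g(1), v_g(2))_g \in \textup{Prim}(\mathcal{S}^{G - \{\textup{id}\}})(\textup{solv.})$ with $\prod_g |N_{K/\mathbb{Q}}(v_g(2))|^{e_g} \leq X$, where $e_g := \#G(1 - 1/\#\langle g \rangle)$. Proposition \ref{Product structure plus} then splits the counting along the projection $\pi$ that drops the $H(G)$-coordinates: for each $x \in \textup{Prim}(\mathcal{S}^{G - H(G)})(\textup{solv.})$, the fiber over $x$ consists (up to a negligible boundary) of tuples of squarefree, pairwise coprime ideals $(v_h(2))_{h \in I(G)}$ supported in $\widetilde{\Omega}_K(l_G)$, coprime to $x$, and subject to the local conditions at $S_{\textup{clean}}(l_G)$ arising from solvability of the lifting problems in Proposition \ref{defining normalized phi: in general}.

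For fixed $x$ I would use Theorem \ref{tCharLoc} to choose the characters $\chi_{\mathfrak{q}}$ (for $\mathfrak{q} \in \widetilde{\Omega}_K(l_G)$) so that their restriction to $\bigoplus_{\mathfrak{q}' \in S_{\textup{clean}}(l_G)} H^1(G_{K_{\mathfrak{q}'}}, \mathbb{F}_{l_G})/H^1_{\textup{unr}}(G_{K_{\mathfrak{q}'}}, \mathbb{F}_{l_G})$ depends only on $\textup{Frob}_{L/K}(\mathfrak{q})$ for an auxiliary finite extension $L/K$. Splitting over the finitely many possible values of this local datum converts the conditions at $S_{\textup{clean}}(l_G)$ into Chebotarev conditions at the primes dividing the $v_h(2)$. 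Theorem \ref{tSelbergDelange} applied with weight $z = \#I(G)$ (reflecting the distribution of each prime divisor of $I := \prod_h v_h(2)$ among the $I(G)$-bins), together with iterations of Lemma \ref{lConvolution} to reconstruct the tuple count from the ideal count, yields an asymptotic
\[
c_x \cdot X^{a(G)} (\log X)^{\#I(G)/[K(\zeta_{l_G}):K] - 1}
\]
for the inner sum, with $c_x > 0$ whenever the fiber over $x$ is non-empty. The centrality assumption $I(G) \subseteq Z(G)$ forces every conjugacy class contained in $I(G)$ to be a singleton, so by Lemma \ref{lbGKNil} we have $\#I(G)/[K(\zeta_{l_G}) : K] = b(G, K)$.

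Finally I would sum over $x$ using Tannery's theorem. The uniform dominating bound required by Tannery's theorem is supplied by the convergent product from the proof of Theorem \ref{mt3: Upper bound general}, namely
\[
\prod_{g \in G - H(G)} \sum_{v_g(2)} \frac{1}{|N_{K/\mathbb{Q}}(v_g(2))|^{e_g a(G)}} < \infty,
\]
since $e_g a(G) = (1 - 1/\#\langle g \rangle) \cdot l_G/(l_G - 1) > 1$ for $g \notin H(G)$. Shafarevich's theorem produces at least one $x$ with non-empty fiber, so $c := \sum_x c_x > 0$ and the desired asymptotic follows. The main obstacle is the middle step: Theorem \ref{tCharLoc} is what transforms the Hasse-principle obstructions at $S_{\textup{clean}}(l_G)$ into Chebotarev conditions compatible with the input format of Theorem \ref{tSelbergDelange}, and without it the local data at $S_{\textup{clean}}(l_G)$ would not be uniformly controllable across the primes dividing the $v_h(2)$.
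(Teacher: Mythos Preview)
Your proposal is correct and follows essentially the same route as the paper: fiber over the projection $\pi$ via Proposition \ref{Product structure plus}, use Theorem \ref{tCharLoc} to make the $S_{\textup{clean}}(l_G)$-local data depend only on Frobenius classes in an auxiliary extension $L/K$, feed the resulting Chebotarev-constrained squarefree counts into Theorem \ref{tSelbergDelange} and Lemma \ref{lConvolution}, and finish with Tannery's theorem using the convergent tail from Theorem \ref{mt3: Upper bound general}. The paper carries out the middle step slightly more explicitly---it records $\omega_{S_j}(v_g(2)) \bmod l_G$ for each Frobenius class $S_j$ and uses a root-of-unity identity (equation \eqref{eProbTrick}) to linearize these congruence constraints before applying Theorem \ref{tSelbergDelange} with $z = \tfrac{\#I(G)}{[K(\zeta_{l_G}):K]}\zeta_{l_G}^a$---but this is exactly the implementation of what you describe as ``splitting over the finitely many possible values of this local datum.''
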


\begin{proof}
We start by labelling the elements of $\text{Prim}(\mathcal{S}^{G - H(G)})(\text{solv.})$ as $x_1, x_2, x_3, \dots$ and we write $L$ for the length of the sequence, where $L$ is possibly infinite. Recall that $\pi$ denotes the natural projection map from $\text{Prim}(\mathcal{S}^{G - \{\text{id}\}})(\text{solv.})$ to $\text{Prim}(\mathcal{S}^{G - H(G)})(\text{solv.})$. We have the decomposition
\[
\#\{\psi \in \textup{Epi}_{\textup{top.gr.}}(G_K, G) : |N_{K/\mathbb{Q}}(\textup{Disc}(\psi))| \leq X \} = \sum_{i = 1}^L \sum_{\substack{y \in \text{Prim}(\mathcal{S}^{G - \{\text{id}\}})(\text{solv.}) \\ \pi(y) = x_i \\ \text{Disc}(y) \leq X}} 1.
\]
We claim that for all $i$ there is a constant $b_{G, x_i} > 0$ such that
\begin{align}
\label{eSlowVarFixed}
\sum_{\substack{y \in \text{Prim}(\mathcal{S}^{G - \{\text{id}\}})(\text{solv.}) \\ \pi(y) = x_i \\ \text{Disc}(y) \leq X}} 1 \sim b_{G, x_i} \cdot X^{a(G)} \cdot \textup{log}(X)^{b(G, K) - 1}.
\end{align}
Once the claim is established, the remainder of the proof is identical to the proof of Theorem \ref{Main2}. To establish the claim, we take a more close look at the conditions imposed on $y$ for a given $x_i$. Write $x_i$ as a tuple $(v_{g'}(1), v_{g'}(2))_{g' \in G - H(G)}$. Let $(v_g(1), v_g(2))_{g \in I(G)}$ be in $\textup{Prim}(\mathcal{S}_{l_1}^{H(G) - \{\textup{id}\}})$. Fix,  for every $g = (g_1, \dots, g_c) \in I(G)$, any choice of 
\[
v_g(1) = (v_{g_1, 1}(1), \dots, v_{g_c, c}(1))
\]
such that $v_{h_j, j}(1)$ and $v_{h'_j, j}(1)$ are coprime (as defined in Section \ref{sMain}) for all distinct $h_j, h'_j \in G(l_j)$ and for all $j \in [c]$.

Then by Proposition \ref{Product structure plus} we see that $\{v_g(2)\}_{g \in I(G)}$ (together with $x_i$ and the variables $v_g(1)$ for $g \in I(G)$) is in $\textup{Prim}(\mathcal{S}^{G - \{\textup{id}\}})(\textup{solv.})$ if the $v_g(2)$ are squarefree non-trivial ideals and pairwise coprime, coprime to the $v_{g'}(2)$ for $g' \in G - H(G)$ and all prime divisors of $v_g(2)$ lie in $\widetilde{\Omega}_K(l_G)$. Conversely, if $\{v_g(2)\}_{g \in I(G)}$ (together with $x_i$ and the variables $v_g(1)$ for $g \in I(G)$) is in $\textup{Prim}(\mathcal{S}^{G - \{\textup{id}\}})(\textup{solv.})$, then the $v_g(2)$ are squarefree and pairwise coprime, coprime to the $v_{g'}(2)$ for $g' \in G - H(G)$ and all prime divisors of $v_g(2)$ lie in $\widetilde{\Omega}_K(l_G)$.

However, we would like to achieve some finer control. Indeed, Proposition \ref{reading off discriminants: in general plus} does not give us full control over the discriminant, but only over the part outside of some finite set of bad places $S$. To remedy this, we apply Theorem \ref{tCharLoc} with this set of places $S$. Let $L/K$ be the extension guaranteed by Theorem \ref{tCharLoc}; tracing through the proof, we see that we can take the field $L$ to be an elementary abelian extension of $K(\zeta_{l_G})$ of degree a power of $l_G$. Let $S_1, \dots, S_k$ be the conjugacy classes of $\Gal(L/K)$ that project trivially to the identity in $\Gal(K(\zeta_{l_G})/K)$. Now choose the characters $\chi_{\mathfrak{p}}$ as in Theorem \ref{tCharLoc}. Concretely this means that
\[
\text{Frob}_{L/K}(\mathfrak{p}) = \text{Frob}_{L/K}(\mathfrak{p}')
\]
implies that the characters $\chi_{\mathfrak{p}}$ and $\chi_{\mathfrak{p}'}$ have the same restriction to
\[
\bigoplus_{\mathfrak{q} \in S} \frac{H^1(G_{K_\mathfrak{q}}, \mathbb{F}_{l_G})}{H^1_{\text{unr}}(G_{K_\mathfrak{q}}, \mathbb{F}_{l_G})}.
\]
We recall that we have already fixed the finitely many possibilities of $\{v_g(1)\}_{g \in I(G)}$. Motivated by Theorem \ref{tCharLoc}, we further split the sum in equation (\ref{eSlowVarFixed}) depending on the values of 
\[
a_{j, g} := \omega_{S_j}(v_g(2)) \bmod l_G
\] 
for $g \in I(G)$. Indeed, if two vectors $\{v_g(2)\}_{g \in I(G)}, \{w_g(2)\}_{g \in I(G)}$ are such that
\[
\omega_{S_j}(v_g(2)) \equiv \omega_{S_j}(v_g(2)) \bmod l_G
\]
for all $j$ and all $g$, then the $S$-part of the discriminant is the same for the two extensions corresponding to respectively $\{v_g(2)\}_{g \in I(G)}$ and $\{w_g(2)\}_{g \in I(G)}$. Hence the sum in equation (\ref{eSlowVarFixed}) can be upper bounded by finitely many sums of the shape
\begin{align}
\label{eSumOverK}
\sum_{\substack{|\prod_{g \in I(G)} N_{K/\Q}(v_g(2))| \leq C(x_i, \mathbf{a}) X^{a(G)} \\ \omega_{S_j}(v_g(2)) \equiv a_{j, g} \bmod l_G \\ v_g(2) \text{ squarefree and pairwise coprime } \\ \mathfrak{p} \mid v_g(2) \Rightarrow \mathfrak{p} \in \widetilde{\Omega}_K(l_G) \\ \gcd(v_g(2), \mathfrak{p}) = 1 \ \forall \mathfrak{p} \in \mathcal{P}(x_i)}} 1,
\end{align}
where $\mathbf{a}$ is any vector in $\mathbb{F}_{l_G}^{[k] \times I(G)}$ with entries $a_{j, g}$, $C(x_i, \mathbf{a})$ is a positive real number depending only on $K$, $G$, $x_i$, $\{v_g(1)\}_{g \in I(G)}$, $\mathbf{a}$ and $\mathcal{P}(x_i)$ is a finite set depending only on $K$, $G$, $x_i$, $\{v_g(1)\}_{g \in I(G)}$. Similarly, the sum in equation (\ref{eSlowVarFixed}) can be lower bounded by finitely many sums of the shape
\begin{align}
\label{eSumOverK2}
\sum_{\substack{|\prod_{g \in I(G)} N_{K/\Q}(v_g(2))| \leq C(x_i, \mathbf{a}) X^{a(G)} \\ v_g(2) \neq (1) \\ \omega_{S_j}(v_g(2)) \equiv a_{j, g} \bmod l_G \\ v_g(2) \text{ squarefree and pairwise coprime } \\ \mathfrak{p} \mid v_g(2) \Rightarrow \mathfrak{p} \in \widetilde{\Omega}_K(l_G) \\ \gcd(v_g(2), \mathfrak{p}) = 1 \ \forall \mathfrak{p} \in \mathcal{P}(x_i)}} 1.
\end{align}
We shall give an asymptotic for equation (\ref{eSumOverK}). From the proof it shall be clear how to extract a matching asymptotic for equation (\ref{eSumOverK2}), which implies the claimed equation (\ref{eSlowVarFixed}). It remains to give an asymptotic for equation (\ref{eSumOverK}).

Our first step is to pass to $K(\zeta_{l_G})$. We define $\mathcal{P}'(x_i)$ as the set of finite places $w$ of $K(\zeta_{l_G})$ such that the place $v$ of $K$ below $w$ is not in $\mathcal{P}(x_i)$ and splits completely in $K(\zeta_{l_G})/K$. Then equation (\ref{eSumOverK}) becomes
\begin{align}
\label{eSumOverKzeta}
\sum_{\substack{|\prod_{g \in I(G)} N_{K(\zeta_{l_G})/\Q}(v_g(2))| \leq C(x_i, \mathbf{a}) X^{a(G)} \\ \omega_{S_j}(v_g(2)) \equiv a_{j, g} \bmod l_G \\ v_g(2) \text{ squarefree and pairwise coprime } \\ \gcd(v_g(2), \mathfrak{p}) = 1 \ \forall \mathfrak{p} \in \mathcal{P}'(x_i)}} \frac{1}{[K(\zeta_{l_G}) : K]^{\sum_{g \in I(G)} \omega(v_g(2))}},
\end{align}
where the $v_g(2)$ are now ideals of $K(\zeta_{l_G})$. To evaluate this sum, define $f_j(I)$ to be the function on $\mathcal{I}_{K(\zeta_{l_G})}$ that sends $I$ to zero if $I$ is divisible by a square, by a prime $\mathfrak{p} \in \mathcal{P}'(x_i)$ or any $\mathfrak{p}$ with $\text{Frob}_\mathfrak{p} \not \in S_j$. If instead $I$ is a squarefree ideal entirely supported on primes $\mathfrak{p}$ with $\text{Frob}_\mathfrak{p} \in S_j$ and $\mathfrak{p} \not \in \mathcal{P}'(x_i)$, we define
\[
f_j(I) = \frac{\# \{(I_g)_{g \in I(G)} \in \mathcal{I}_{K(\zeta_{l_G})}^{I(G)} : \prod_{g \in I(G)} I_g = I, \omega_{S_j}(I_g) \equiv a_{j, g} \bmod l_G\}}{[K(\zeta_{l_G}) : K]^{\omega(I)}}.
\]
Having defined $f_j(I)$, we see that equation (\ref{eSumOverKzeta}) is simply
\begin{align}
\label{eConvolution}
\sum_{N_{K/\Q}(I) \leq C(x_i, \mathbf{a}) X^{a(G)}} (f_1 \ast \dots \ast f_k)(I).
\end{align}
We will now approximate $f_j(I)$ using some probability theory. Assume, for now, that $I$ is a squarefree ideal entirely supported on primes $\mathfrak{p}$ with $\text{Frob}_\mathfrak{p} \in S_j$ and $\mathfrak{p} \not \in \mathcal{P}'(x_i)$. First define for $(b_{j, g})_{g \in I(G)} \in \Z_{\geq 0}^{I(G)}$
\[
g_{j, (b_{j, g})_{g \in I(G)}}(I) = \frac{\# \{(I_g)_{g \in I(G)} \in \mathcal{I}_{K(\zeta_{l_G})}^{I(G)} : \prod_{g \in I(G)} I_g = I, \omega_{S_j}(I_g) = b_{j, g}\}}{\# I(G)^{\omega(I)}}
\]
Let us recall the multinomial distribution. As input it takes an integer $n$, which is the sample size, an integer $k$, which are the number of mutually exclusive events $E_1, \dots, E_k$, and real numbers $0 \leq p_1, \dots, p_k \leq 1$ such that $p_i$ is the probability of the event $E_i$. We further demand
\[
p_1 + \dots + p_k = 1.
\]
The multinomial distribution is then a vector $X = (X_1, \dots, X_k)$, where $X_i$ indicates the number of outcomes of event $E_i$ in $n$ independent samples. We now take $n = \omega(I)$, $k = \# I(G)$ and $p_1 = \dots = p_k = 1/k$. Then $g_{j, (b_{j, g})_{g \in I(G)}}(I)$ is the probability density function of the resulting multinomial distribution. Concretely,
\[
\mathbb{P}(X_g = b_{j, g} \text{ for all } g \in I(G)) = g_{j, (b_{j, g})_{g \in I(G)}}(I) = \frac{\omega(I)!}{\prod_{g \in I(G)} b_{j, g}!} \cdot \frac{1}{\# I(G)^{\omega(I)}},
\]
where we have relabelled the variables $X_i$ as $X_g$ with $g \in I(G)$. Now observe that 
\[
\frac{\omega(I)!}{\prod_{g \in I(G)} b_{j, g}!}
\]
are precisely the coefficients when one expands
\[
\left(\sum_{g \in I(G)} p_g\right)^{\omega(I)}
\]
as a polynomial in the variables $p_g$. Evaluating this polynomial at $p_g = \zeta_{l_G}^{c_g}/\# I(G)$ as $c_g$ runs through all vectors in $\mathbb{F}_{l_G}^{I(G)}$ shows that
\begin{align}
\label{eProbTrick}
\left(\frac{[K(\zeta_{l_G}) : K]}{\#I(G)}\right)^{\omega(I)}  f_j(I) &= l_G^{-\#I(G)} \sum_{(c_g)_{g \in I(G)} \in \mathbb{F}_{l_G}^{I(G)}} \left(\prod_{g \in I(G)} \zeta_{l_G}^{-c_g a_{j, g}}\right) \cdot \left(\sum_{g \in I(G)} \frac{\zeta_{l_G}^{c_g}}{\# I(G)}\right)^{\omega(I)} \nonumber \\
&= l_G^{-\#I(G) + 1} \cdot \mathbf{1}_{\omega(I) \equiv \sum_{g \in I(G)} a_{j, g} \bmod l_G} + O(\delta^{\omega(I)})
\end{align}
with $\delta < 1$. We apply Theorem \ref{tSelbergDelange} with 
\[
z = \frac{\#I(G)}{[K(\zeta_{l_G}) : K]} \cdot \zeta_{l_G}^a
\]
as $a$ runs through $0, \dots, l_G - 1$. We conclude that
\begin{multline}
\label{efjI}
\sum_{N_{K/\Q}(I) \leq X} f_j(I) = C X (\log X)^{\frac{\#I(G) \# S_j}{[K(\zeta_{l_G}) : K] \# \Gal(L/K(\zeta_{l_G}))} - 1}  + \\
O\left(X (\log X)^{\frac{\#I(G) \# S_j}{[K(\zeta_{l_G}) : K] \# \Gal(L/K(\zeta_{l_G}))} - 1 - \delta'}\right)
\end{multline}
for some constant $C > 0$ and some $\delta' > 0$. The theorem now follows from equation (\ref{efjI}), equation (\ref{eConvolution}) and repeated application of Lemma \ref{lConvolution}.
\end{proof}


\begin{thebibliography}{39}
\bibitem{Alberts1}
B. Alberts.
Statistics of the First Galois Cohomology Group: A Refinement of Malle's Conjecture.
\textit{arXiv preprint}, 1907.06289v2.

\bibitem{Alberts2}
B. Alberts.
The Weak Form of Malle's Conjecture and Solvable Groups.
\textit{Res. Number Theory} 6, Paper No. 10, 2020.

\bibitem{AD}
B. Alberts and E. O'Dorney.
Harmonic Analysis and Statistics of the First Galois Cohomology Group.
\textit{arXiv preprint}, 2102.11223.

\bibitem{ASVW}
S. Ali Altu\u{g}, A. Shankar, I. Varma and K.H. Wilson.
The number of quartic $D_4$-fields ordered by conductor.
\textit{arXiv preprint}, 1704.01729.
	
\bibitem{BhargavaI}
M. Bhargava.
The density of discriminants of quartic rings and fields. 
\textit{Ann. of Math. (2)} 162:1031-1063, 2005. 	

\bibitem{BhargavaII}
M. Bhargava.
The density of discriminants of quintic rings and fields.
\textit{Ann. of Math. (2)} 172:1559-1591, 2010.

\bibitem{BSTTTZ}
M. Bhargava, A. Shankar, T. Taniguchi, F. Thorne, J. Tsimerman and Y. Zhao.
Bounds on $2$-torsion in class groups of number fields and integral points on elliptic curves.
\textit{J. Amer. Math. Soc.} 33:1087-1099, 2020. 
	
\bibitem{BST}
M. Bhargava, A. Shankar and J. Tsimerman.
On the Davenport-Heilbronn theorems and second order terms.
\textit{Invent. Math.} 193:439-499, 2013.

\bibitem{BhargavaWood}
M. Bhargava and M.M. Wood. 
The density of discriminants of $S_3$-sextic number fields. 
\textit{Proc. Amer. Math. Soc.} 136:1581-1587, 2008.

\bibitem{CDO}
H. Cohen, F. Diaz y Diaz and M. Olivier.
Enumerating quartic dihedral extensions of $\mathbb{Q}$. 
\textit{Compositio Math.} 133:65-93, 2002. 

\bibitem{CDO2}
H. Cohen, F. Diaz y Diaz and M. Olivier.
On the density of discriminants of cyclic extensions of prime degree.
\textit{J. Reine Angew. Math.} 550:169-209, 2002. 

\bibitem{cohen--lenstra}
H. Cohen and H.W. Lenstra.
Heuristics on class groups of number fields.  
Number theory, {N}oordwijkerhout 1983, 
\textit{Lecture Notes in Math.},
\textit{Springer, Berlin}, 1984,
33-62.

\bibitem{Coleman}
M. D. Coleman.
The Rosser--Iwaniec sieve in number fields, with an application.
\textit{Acta Arith.} 65:53-83, 1993.

\bibitem{Couveignes}
J.-M. Couveignes. 
Enumerating number fields. 
\textit{Ann. of Math. (2)} 192:487-497, 2020.

\bibitem{DatskovskyWright}
B. Datskovsky and D.J. Wright.
Density of discriminants of cubic extensions. 
\textit{J. Reine Angew. Math.} 386:116-138, 1988.

\bibitem{DH}
H. Davenport and H. Heilbronn. 
On the density of discriminants of cubic fields, II. 
\textit{Proc. Roy. Soc. Lond. A} 322:405-420, 1971. 

\bibitem{ETW}
J. Ellenberg, T. Tran and C. Westerland.
Fox-Neuwirth-Fuks cells, quantum shuffle algebras, and Malle's conjecture for function fields.
\textit{arXiv preprint}, 1701.04541.

\bibitem{EV}
J. Ellenberg and A. Venkatesh.
The number of extensions of a number field with fixed degree and bounded discriminant.
\textit{Ann. of Math. (2)} 163:723-741, 2006.

\bibitem{EVW}
J. Ellenberg, A. Venkatesh and C. Westerland.
Homological stability for Hurwitz spaces and the Cohen-Lenstra conjecture over function fields.
\textit{Ann. of Math. (2)} 183:729-786, 2016.
 
\bibitem{FK1}
\'E. Fouvry and J. Kl\"uners. 
Cohen-Lenstra heuristics of quadratic number fields.
Algorithmic number theory, 
\textit{Lecture Notes in Comput. Sci.}, 4076, 
\textit{Springer, Berlin}, 2006,
40-55.

\bibitem{fouvry--kluners}
\'E. Fouvry and J. Kl\"uners.
On the 4-rank of class groups of quadratic number fields.
\textit{Invent. Math.} 167:455-513, 2007.

\bibitem{FK}
\'E. Fouvry and P. Koymans.
Malle's conjecture for nonic Heisenberg extensions.
\textit{arXiv preprint}, 2102.09465.

\bibitem{Gerth}
F. Gerth.
The $4$-class ranks of quadratic fields. 
\textit{Invent. Math.} 77:498-515, 1984. 

\bibitem{HB}
D.R. Heath-Brown. 
The size of Selmer groups for the congruent number problem, II. 
\textit{Invent. Math.} 118:331-370, 1994.

\bibitem{HSV}
W. Ho, A. Shankar and I. Varma.
Odd degree number fields with odd class number.
\textit{Duke Math. J.} 167:995-1047, 2018. 

\bibitem{IK}
H. Iwaniec and E. Kowalski.
Analytic number theory.
\textit{Colloquium Publications}, 53. American Mathematical Society, Providence, RI, 2004.

\bibitem{KlunersCounter}
J. Kl\"uners.
A counterexample to Malle's conjecture on the asymptotics of discriminants. 
\textit{C. R. Math. Acad. Sci. Paris} 340:411-414, 2005.

\bibitem{KlunersHab}
J. Kl\"uners.
\textit{\"Uber die Asymptotik von Zahlk\"orpern mit vorgegebener Galoisgruppe}. 
Shaker, 2005.

\bibitem{KlunersUpper}
J. Kl\"uners.
The asymptotics of nilpotent Galois groups.
\textit{arXiv preprint}, 2011.04325.

\bibitem{KlunersMalle}
J. Kl\"uners and G. Malle.
Counting nilpotent Galois extensions. 
\textit{J. Reine Angew. Math.} 572:1-26, 2004.

\bibitem{KlunersWang}
J. Kl\"uners and J. Wang.
$\ell$-torsion bounds for the class group of number fields with an $\ell$-group as Galois group.
\textit{arXiv preprint}, 2003.12161.

\bibitem{KP1}
P. Koymans and C. Pagano.
On the distribution of $\text{Cl}(K)[l^{\infty}]$ for degree $l$ cyclic fields. 
\textit{arXiv preprint}, 1812.06884.

\bibitem{KP2}
P. Koymans and C. Pagano.
Higher genus theory. 
\textit{Int. Math. Res. Not.}, 2020, rnaa196, \url{https://doi.org/10.1093/imrn/rnaa196}.

\bibitem{KP3}
P. Koymans and C. Pagano.
A sharp upper bound for the $2$-torsion of class groups of multiquadratic fields.
\textit{arXiv preprint}, 2009.08399.

\bibitem{Louboutin}
S. Louboutin. 
Explicit upper bounds for residues of Dedekind zeta functions and values of $L$-functions at $s = 1$, and explicit lower bounds for relative class numbers of CM-fields. 
\textit{Canad. J. Math.} 53:1194-1222, 2001.

\bibitem{OlivierThorne}
R.J. Lemke Oliver and F. Thorne.
Upper bounds on number fields of given degree and bounded discriminant.
\textit{arXiv preprint}, 2005.14110.

\bibitem{Malle1}
G. Malle. 
On the distribution of Galois groups. 
\textit{J. Number Theory} 92:315-329, 2002.

\bibitem{Malle2}
G. Malle.
On the distribution of Galois groups. II. 
\textit{Experiment. Math.} 13:129-135, 2004. 

\bibitem{MTTW}
R. Masri, F. Thorne, W.-L. Tsai and J. Wang.
Malle's Conjecture for $G \times A$, with $G = S_3, S_4, S_5$.
\textit{arXiv preprint}, 2004.04651v2.

\bibitem{MR}
B. Mazur and K. Rubin.
Kolyvagin systems.
\textit{Mem. Amer. Math. Soc.} 168, No. 799, 2004.

\bibitem{MoVa} 
H.L. Montgomery  and R.C. Vaughan.
Multiplicative number theory. I. Classical theory. 
\textit{Cambridge Studies in Advanced Mathematics}, 97. Cambridge University Press, Cambridge, 2007.
	
\bibitem{Pagano--Sofos}
C. Pagano and E. Sofos.
$4$-ranks and the general model for statistics of ray class groups of imaginary quadratic fields. 
\textit{arXiv preprint}, 1710.07587.

\bibitem{Schmidt}
W.M. Schmidt.
Number fields of given degree and bounded discriminant.
\textit{Ast\'erisque} 228:189-195, 1995. 

\bibitem{Shafarevich}
I.R. Shafarevich. 
Extensions with given points of ramification (Russian).
\textit{Inst. Hautes \'Etudes Sci. Publ. Math.} 18, 71-95, 1963.
English translation in: Collected mathematical papers. Springer-Verlag, Berlin, 1989.

\bibitem{Siad}
A. Siad.
Monogenic fields with odd class number Part I: odd degree.
\textit{arXiv preprint}, 2011.08834.

\bibitem{Siad2}
A. Siad.
Monogenic fields with odd class number Part II: even degree.
\textit{arXiv preprint}, 2011.08842.

\bibitem{Smith1}  
A. Smith.
Governing fields and statistics for {$4$}-Selmer groups, {$8$}-class groups.
\textit{arXiv preprint}, 1607.07860.

\bibitem{Smith}
A. Smith.
$2^\infty$-Selmer Groups, $2^\infty$-class groups, and Goldfeld's conjecture.
\textit{arXiv preprint}, 1702.02325v2.

\bibitem{TT}
T. Taniguchi and F. Thorne.
Secondary terms in counting functions for cubic fields. 
\textit{Duke Math. J.} 162:2451-2508, 2013.

\bibitem{TZ}
J. Thorner and A. Zaman.
A unified and improved Chebotarev density theorem. 
\textit{Algebra Number Theory} 13:1039-1068, 2019.

\bibitem{Turkelli}
S. T\"urkelli.
Connected components of Hurwitz schemes and Malle's conjecture. 
\textit{J. Number Theory} 155:163-201, 2015.

\bibitem{Wang}
J. Wang.
Malle's Conjecture for $S_n \times A$ for $n = 3, 4, 5$.
\textit{arXiv preprint}, 1705.00044v3.

\bibitem{Wright}
D.J. Wright.
Distribution of discriminants of abelian extensions. 
\textit{Proc. London Math. Soc. (3)} 58:17-50, 1989.
\end{thebibliography}
\end{document}